\documentclass[reqno,12pt,letterpaper]{amsart}

\NeedsTeXFormat{LaTeX2e}[1994/06/01]

\RequirePackage{amsmath,amssymb,amsthm,graphicx,mathrsfs,url}
\RequirePackage[usenames,dvipsnames]{color}
\RequirePackage[colorlinks=true,linkcolor=ForestGreen,citecolor=MidnightBlue]{hyperref}
\RequirePackage{amsxtra}
\usepackage{comment}
\usepackage{enumitem}
\usepackage{cases}

\DeclareOption{proof}{%
  \setlength{\marginparwidth}{0.6in}%
  \def\?[#1]{\textbf{[#1]}\marginpar{\Large{\textbf{??}}}}%
}
\ProcessOptions\relax

\setlength{\textheight}{8.50in} \setlength{\oddsidemargin}{0.00in}
\setlength{\evensidemargin}{0.00in} \setlength{\textwidth}{6.10in}
\setlength{\topmargin}{0.00in} \setlength{\headheight}{0.18in}
\setlength{\marginparwidth}{1.0in}
\setlength{\abovedisplayskip}{0.2in}
\setlength{\belowdisplayskip}{0.2in}
\setlength{\parskip}{0.05in}

\DeclareGraphicsRule{*}{mps}{*}{}


\numberwithin{equation}{section}



\newtheorem{theo}{Theorem}
\newtheorem{prop}{Proposition}[section]
\newtheorem{defi}[prop]{Definition}
\newtheorem{lemm}[prop]{Lemma}

\def\Remark{\noindent\textbf{Remark.}\ }
\def\Remarks{\noindent\textbf{Remarks.}\ }


\newtheorem{defn}[prop]{Definition}
\newtheorem{ass}[prop]{Assumption}

\newtheorem{rem}[prop]{Remark}





\usepackage{url}



\newcommand{\eps}{{\varepsilon}}

\newcommand{\RR}{{\mathbb R}}

\newcommand\cO{{\mathcal O}}


\newcommand{\ox}{\overline{x}}
\newcommand{\oy}{\overline{y}}
\newcommand{\otheta}{\overline{\theta}}
\newcommand{\oxi}{\overline{\xi}}
\newcommand{\oalpha}{\overline{\alpha}}
\newcommand{\ob}{\overline{b}}

\newcommand{\urho}{\underline{\rho}}

\newcommand{\usigma}{\underline{\sigma}}




\def\eps{\epsilon }

\newcommand\adots{\mathinner{\mkern2mu\raise1pt\hbox{.}
\mkern3mu\raise4pt\hbox{.}\mkern1mu\raise7pt\hbox{.}}}




\setcounter{tocdepth}{2}
\let\oldtocsection=\tocsection
\let\oldtocsubsection=\tocsubsection
\renewcommand{\tocsection}[2]{\hspace{0em}\oldtocsection{#1}{#2}}
\renewcommand{\tocsubsection}[2]{\hspace{1em}\oldtocsubsection{#1}{#2}}

\numberwithin{equation}{section}

\begin{document}

\title[Convex grazing waves ]{Convex waves grazing convex obstacles to high order}

\author{Jian Wang}
\email{wangjian@ihes.fr}
\address{Institut des Hautes {\'E}tudes Scientifiques, Bures-sur-Yvette, France 91893}
\author{Mark Williams}
\email{williams@email.unc.edu}
\address{Department of Mathematics, University of North Carolina, Chapel Hill, NC 27599}

\begin{abstract}
In a recent paper \cite{ww2023} we studied the transport of oscillations in solutions to
linear and some semilinear second-order hyperbolic boundary problems along rays that graze a convex obstacle to any order.  We showed that high frequency exact solutions are well approximated  in $H^1$ by much simpler approximate solutions constructed from explicit solutions to profile equations.   That result depends on two geometric assumptions, referred to here as the grazing set (GS) and reflected flow map (RFM) assumptions, that are both difficult to verify in general.    The GS assumption states that the \emph{grazing set}, that is, the set of points on the spacetime boundary at which incoming characteristics meet the boundary tangentially, is a codimension two, $C^1$ submanifold of spacetime.  The second is that the \emph{reflected flow map}, which sends points on the spacetime boundary forward in time to points on reflected and grazing rays,  is injective and has appropriate regularity properties near the grazing set.   In this paper we analyze these assumptions for incoming plane, spherical, and more general ``convex waves" when the governing hyperbolic operator is the wave operator $\Box:=\Delta-\partial_t^2$.   We prove general results describing when the assumptions hold, and provide explicit examples where the GS assumption fails.

\end{abstract}

\maketitle

{
\hypersetup{linkcolor=NavyBlue}
\tableofcontents
}

\newpage

\section{Introduction}

In the paper \cite{ww2023} we provided a geometric optics description in spaces of low regularity of the transport of  oscillations in solutions to linear and some semilinear second-order hyperbolic boundary problems
along rays that graze the boundary of a convex obstacle to arbitrarily high finite or infinite order.    The fundamental motivating example  is the case where the spacetime manifold is $M=(\mathbb{R}^n\setminus \mathcal{O})\times \mathbb{R}_t$, 
where $\mathcal{O}\subset \mathbb{R}^n$ is an open convex  
obstacle with $C^\infty$ boundary, and the governing hyperbolic operator is the wave operator $\Box:=\Delta-\partial_t^2$.

The main theorem of \cite{ww2023}, which applies to quite general second-order hyperbolic operators, obstacles, and incoming waves,  showed that high frequency exact solutions with high order grazing rays are well approximated  in spaces of low regularity by approximate solutions constructed from fairly explicit solutions to simple profile equations.  The theorem has two main assumptions, both of a geometric nature, which need to be verified for each choice of hyperbolic operator, obstacle, and incoming wave.  The first is that the \emph{grazing set}, that is, the set of points on the spacetime boundary at which incoming characteristics meet the boundary tangentially, is a codimension two, $C^1$ submanifold of spacetime.  The second is that the \emph{reflected flow map}, which sends points on the spacetime boundary forward in time to points on reflected and grazing rays,  is injective and has appropriate regularity properties near the grazing set.   Both assumptions are in general hard to verify.  In this paper we  formulate general results describing when the assumptions hold, and provide explicit examples where the GS assumption fails.

In \S \ref{dar} we explain how to distinguish different \emph{orders} of grazing.  The orders may be $2,4, 6,\dots$ or $\infty.$
We showed in \cite{ww2023} that when the order is $2$, the reflected flow map (RFM) and grazing set (GS) assumptions always hold for very general second order hyperbolic operators, obstacles,  and incoming waves.   When the order of grazing is higher than two,  the GS assumption can fail even for $\Box$, strictly convex obstacles, and incoming \emph{plane} waves as we show in $\S\ref{bad}$, and the RFM assumption becomes much harder to verify even in such cases.  In \cite{ww2023} we identified large classes of strictly convex obstacles in all dimensions for which both assumptions are satisfied when incoming \emph{plane waves} for $\Box$ graze the obstacle to arbitrarily high finite or infinite order. 

Plane waves may be thought of as being produced by a point source at infinity.   Much of this paper is concerned with verifying the RFM  and GS assumptions for incoming \emph{spherical waves} for $\Box$, which may be viewed as arising from a point source  at a finite distance from the obstacle.   This is done in \S \ref{siw} and \S\ref{natureg}--\ref{3do}.   
The incoming waves we consider in this paper are associated to phase functions of the form
\[
 \phi_i(t,x) = -t+\psi_i(x), 
 \]
where $\psi_i:\Omega\to \mathbb{R}$, $\Omega\subset \RR^n$, is a smooth convex function that satisfies the eikonal equation
\[(\partial_{x_1}\psi_i)^2+|\nabla \psi_i|^2=1, \ \nabla:=(\partial_{x_2},\cdots, \partial_{x_n}).\]
These are the \emph{convex waves} referred to in the title.   When $\psi_i(x)=\theta\cdot x$ for $\theta\in S^{n-1}$ we obtain plane waves; when $\psi_i(x)=|x-b|$ we obtain spherical waves associated to a source at $b\in\mathbb{R}^n$.  \S \ref{ciw} verifies the RFM assumption for general convex incoming waves.  Many examples of such waves are provided in the Remark after Assumption \ref{assp}.

   While the RFM assumption turns out to hold for general convex waves and strictly convex obstacles, this is not the case for the  GS assumption, where the dimension of the obstacle is now important.    
In \S \ref{natureg} we show that for  2D strictly convex obstacles the GS assumption always holds for incoming spherical waves.\footnote{In \cite{ww2023} the GS assumption was shown to hold always for incoming planar waves and 2D strictly convex obstacles.}    \S \ref{nd} verifies the GS assumption for nD obstacles with a special symmetry; these include examples of infinite order tangency.   In \S\ref{3do} we  
verify the GS assumption for incoming spherical waves for a large class of 3D strictly convex obstacles.   Theorem \ref{u1} gives a simple sufficient condition \eqref{u1ww} on the lowest order (nonconstant) homogeneous polynomial in the Taylor expansion of the function defining the obstacle for the grazing set to be a codimension two $C^\infty$ submanifold of spacetime.  

In \S\ref{bad} we provide  
examples, both for incoming plane and spherical waves,   where the GS assumption \emph{fails}; in these examples the grazing set is \emph{cusped};  it is the graph of a function that is continuous, but not $C^1$.  We also give examples  where the grazing set is $C^1$ but not twice differentiable, so the GS assumption holds, but just barely.  These examples 
show what can happen when the obstacle does not satisfy the sufficient condition \eqref{u1ww} of Theorem \ref{u1}. 
 
 Theorem \ref{A} of \S \ref{branch} provides a general result about the nature of the grazing set for strictly convex obstacles for which condition 
\eqref{u1ww} of Theorem \ref{u1} need not hold, but where the high order grazing point is an isolated zero of the Gauss curvature of the boundary of the obstacle.  The result is that, while the grazing set may not be $C^1$, it is at least $C^0$.  More precisely, the spatial projection of the grazing set is a \emph{single} continuous curve near the high order grazing point; branching does not occur.

It is perhaps surprising that nonsmooth grazing sets can occur even in problems where the convex obstacle has a $C^\infty$ boundary and the incoming waves are plane waves or spherical waves.   The examples show that for  strictly convex obstacles that do not satisfy condition \eqref{u1ww} of Theorem \ref{u1},  the GS assumption can fail or not depending on the location of the source $b$ of spherical waves with phase $-t+|x-b|$.

   The spatial projection of the grazing set for incoming spherical waves can be defined as the intersection of two smooth hypersurfaces in $\mathbb{R}^n$.  
   The regularity of that set  is difficult to study in general, 
   because the normals to these two hypersurfaces always point in the \emph{same direction} at a point of high order (that is, order $\geq 4$ in the sense of Definition \ref{d13}) grazing.   Thus, the implicit function theorem \emph{cannot} be applied directly; see Remark \ref{d14z}.  In \S \ref{gsa} we introduce some methods for dealing with this failure of transversality.
   
   We believe that the study of the reflected flow map and the grazing set are of independent interest, apart from the connection to the main theorem of \cite{ww2023}.   So we have tried to write this paper so that it can be read independently of \cite{ww2023}. 
Sections \ref{siw}, \ref{gsa}, \ref{ciw} use notation and definitions from section \ref{dar}, but can be read independently of each other.
   \vspace{5pt}

\noindent
\textbf{Remark on notations.} 

\noindent
1. For the dot product of vectors $u,v\in\mathbb{R}^k$ we use both $u\cdot v$ and $\langle u,v\rangle$.

\noindent
2.  If $x,y$ are row vectors in $\mathbb{R}^n$, we sometimes denote by $x\otimes y$ the $n\times n$ matrix $x^Ty$.

\noindent
3. We denote $x=(x_1,\ox)\in \RR^{1+(n-1)}$ and set $\nabla = \nabla_{\ox} = (\partial_{x_2}, \cdots, \partial_{x_n})$.
Otherwise, when we use $\nabla$ we will attach a subscript variable as in Definition \ref{def: convex}.

\noindent
4.   If $S\subset \mathbb{R}^n_x\times \mathbb{R}_t$, we denote by $\pi_xS$ the projection of $S$ onto $\mathbb{R}^n_x$, by $\pi_{\ox}S$ the projection onto $\mathbb{R}^{n-1}_{\ox}$, and so on.

   \section{Definitions, results, and applications}\label{dar}
   In this section we explain the RFM and GS assumptions.   We begin by clarifying  the definition of $C^1$ convex (concave) functions and strictly convex (concave) functions.

   \begin{defi}\label{def: convex}
    Let $U\subset\mathbb{R}^n$ be an open convex set.\footnote{This means that if $x,y\in U$, then the line segment $[x,y]$ is contained in $U$.}
    \begin{enumerate}
        \item A $C^1$ function $f: U\to \RR$ is said to be convex (or convex on $U$) if for any $z_1,z_2\in U$, 
        \begin{equation}\label{convex}
        f(z_2)-f(z_1)\geq \langle \nabla_z f(z_1), z_2-z_1 \rangle;
        \end{equation}
        here $\nabla_z f=(\partial_{x_1}f, \cdots, \partial_{x_{n}} f)$. A $C^1$ function is said to be concave if $\geq$ in \eqref{convex} is replaced by $\leq$.
        \item A $C^1$ function $f: U\to \RR$ is said to be strictly convex/convave, if \eqref{convex} (resp. \eqref{convex} with $\leq$) holds for all $z_1, z_2\in U$ with equality holding if and only if $z_1=z_2$.
    \end{enumerate}
\end{defi}

\noindent \textbf{Remarks and Examples. }

\noindent
1. This definition appears to be standard. For example, it is the one given in \cite{robertsvarberg1973}.
For $f:U\to \mathbb{R}$ as above and any subset $V\subset U$, we say that $f$ is convex/concave on $V$ if it is convex/concave on $U$.

\noindent
2. A $C^2$ function on $U$  is convex/concave if and only if its Hessian matrix is positive/negative semi-definite.

\noindent
3. If a $C^2$ function has a positive/negative definite Hessian matrix, then it is strictly convex/concave.  The converse is not true.

\noindent
4. (a) For $k\in \mathbb{N}$ the function $f(z)=|z|^{2k}$ is strictly convex. When $k>1$  we have $\nabla_z^2f>0 \Leftrightarrow z\neq 0$.

(b) For $k\geq 1$ the function $f(z)=z_1^{2k}+\cdots+z_n^{2k}$ is strictly convex.   We have $\nabla_z^2f\geq 0$,  
but for $k>1$ the function fails to satisfy the condition that $z\neq 0 \Rightarrow \nabla_z^2f>0$.

\noindent 
5. The function, $f(z)=\begin{cases}e^{-\frac{1}{|z|}},\;z\neq 0\\ 0, \;z=0\end{cases}$ is $C^\infty$, strictly convex near $z=0$, and satisfies $\nabla_z^2 f>0\Leftrightarrow z\neq 0$ near $z=0$. The function, $g(z)=|z|^{a}$, $a>1$, $a\in \RR$, is $C^1$ and strictly convex.


\begin{ass}\label{n1z}
Let  $\mathcal O\subset \RR^n$  be a convex obstacle given by 
\begin{align}\label{defo}
\mathcal O:=\{ (x_1, \ox)| \ x_1< F(\ox),\; \ox\in\RR^{n-1} \},
\end{align}
where $F: \RR^{n-1}\to \RR$ is a $C^\infty$ strictly concave function, and set $M:=(\mathbb{R}^n\setminus \mathcal{O})\times \mathbb{R}_t$.   We arrange by translation and rotation  of $\mathcal{O}$ so that 
 $F$ has a max at $\ox=0$ with $F(0)=1$.   In that case $\nabla F(0)=0$ and the tangent plane to $\partial\mathcal{O}$ at $x=(x_1,\ox)=(1,0)$ is   $x_1=1$. 
\end{ass}




\subsection{Incoming and reflected phases}
The ``waves" that we consider are solutions to a continuation/reflection problem on $M_T:=(\mathbb{R}^n\setminus \mathcal{O})\times [-T,T]$ of the form\footnote{The exact meaning of ``$\sim$" in \eqref{e0ac} is not important at this point; see Remarks (1) at the end of section \ref{dar}.}
\begin{subnumcases}{\label{e0a}}
    \Box u^\eps=f(x,t,u^\eps,\nabla_{x,t} u^\eps) & in $M_T$, \label{e0aa}\\
    u^\eps =0 & on $\partial\mathcal O\times [-T,T]$, \label{e0ab}\\
    u^\eps\sim u^1(x,t)+\eps U_1(x,t,\phi_i/\eps) & in $ M_T \cap \{t<-T+\delta\}$ \label{e0ac}.
\end{subnumcases}
Here the functions $u^1$ and $U_1(x,t,\theta_i)$ have $(x,t)$-support strictly away from $\partial M$, the function $U_1(x,t,\theta_i)$, called the incoming profile,  is $2\pi$-periodic in $\theta_i$, and  $\phi_i(x,t)=-t+\psi_i(x)$, where $\psi_i:U\to \mathbb{R}$ is a smooth convex function that satisfies the eikonal equation
\[(\partial_{x_1}\psi_i)^2+|\nabla \psi_i|^2=1\]
on some convex open set $U\subset \mathbb{R}^n$, $U\ni x=(1,0)$.    We express condition \eqref{e0ac} by saying that the 
wave $u^\eps$  oscillates with the incoming phase $\phi_i$ in $t<-T+\delta$ (the past).  The function $f(x,t,p,q)$ is Lipschitz in the argument $(p,q)$.

The reflected phase $\phi_r$ needed to describe $u^\eps$ in the future is constructed using null bicharacteristics of $\Box$. To describe these we let $\xi=(\xi_1,\oxi)\in\mathbb{R}^n$, $\tau\in\mathbb{R}$ and write the symbol of $\Box$ as $p(x,t,\xi,\tau)=\xi_1^2+|\oxi|^2-\tau^2$.   Null bicharacteristics of $\Box$ are integral curves $\gamma:\mathbb{R}\to T^*M$ of the Hamiltonian vector field $H_p=2\xi_1\partial_{x_1}+2\oxi\partial_{\ox}-2\tau\partial_t$ such that $\gamma(s)\in p^{-1}(0)$ for all $s$.
 Thus, the components of $\gamma(s)=(x(s),t(s),\xi(s),\tau(s))$ satisfy
 \begin{align*}
\dot x(s)=2\xi,\;  \dot t(s)=-2\tau,\;  \dot \xi(s)=0,\;  \dot\tau(s)=0;\quad |\xi(s)|=|\tau(s)|.
\end{align*}
Note that null bicharacteristics are straight lines, and that the direction of $(x(s),t(s))$ is given by $(2\xi(0),-2\tau(0))$ for all $s$.
Let us set 
\begin{align*}
\xi_1^i(\ox):=\partial_{x_1}\psi_i(F(\ox),\ox), \;\oxi^i(\ox):=\partial_{\ox}\psi_i(F(\ox),\ox).
\end{align*}
For points $\ox$ such that 
\begin{align}\label{d6}
\xi^i_1(\ox)-\nabla F(\ox)\cdot \oxi^i(\ox)\leq 0
\end{align}
we define the forward incoming null bicharacteristic associated to $\phi_i$, which hits $\partial T^*M$ at $\rho:=(F(\ox),\ox,t,\xi^1_i(\ox),\oxi^i(\ox),-1)$ when $s=0$, to be\footnote{Here  ``forward" means that $t$ increases as $s$ increases. We sometimes suppress the $(\ox,t)$ dependence and write just $\gamma_i(s)$. }   
\begin{align}\label{d6z}
\gamma_i(s;\ox,t)=(F(\ox)+2s\xi_i^1(\ox), \ox+2s\oxi^i(\ox),t+2s,\xi^1_i(\ox),\oxi^i(\ox),-1),
\end{align}
and the associated forward incoming characteristic to be $\pi_{(x,t)}\gamma_i$, the projection onto the $(x,t)$ coordinates.  
In terms of $\phi_i$ the direction of  $\pi_{(x,t)}\gamma_i$ is thus $$2(\partial_{x}\phi_i(F(\ox),\ox,t),-\partial_t\phi_i(F(\ox),\ox,t)).$$
 In the case $<$ the condition \eqref{d6} means that $\pi_{(x,t)}\gamma_i$ enters $M^c:=\RR^{n+1}\setminus M$ for small $s>0$, while in the case $=$ the condition means that $\pi_{(x,t)}\gamma_i$ is tangent to $\partial M$ at $s=0$.

To identify reflected null bicharacteristics we introduce the inclusion map $i:\partial M\to M$, which induces the pullback map 
$i^*:\partial T^*M\to T^*\partial M$.  If we use the map $(\ox,t)\to (F(\ox),\ox,t)$ to parametrize $\partial M$,  then $i^*$ is given in coordinates by 
\begin{align*}
i^*(F(\ox),\ox,t,\xi_1,\oxi,\tau)=(\ox,t,\xi_1\nabla F(\ox)+\oxi,\tau).
\end{align*}
Thus, the kernel of $i^*$ is $N^*\partial M$, that is:
\begin{align}\label{d4}
\begin{split}
(\xi_1\nabla F(\ox)+\oxi,\tau)=0 \ \Leftrightarrow & \ (\xi_1,\oxi,\tau)\parallel (1,-\nabla F(\ox),0) \\ \Leftrightarrow & \ 
 (F(\ox),\ox,t,\xi_1,\oxi,\tau)\in N^*\partial M.
\end{split}
\end{align}
Now set $\phi_0(\ox,t):=\phi_i(F(\ox),\ox,t)$ and 
observe that $\gamma_i$ ``passes over" the point $$(\ox,t,\partial_{\ox,t}\phi_0(\ox,t))\in T^*\partial M$$ in the sense that 
\begin{align*}
i^*\gamma_i(0;\ox,t)=(\ox,t,\xi_1^i(\ox)\nabla F(\ox)+\oxi^i(\ox),-1)=(\ox,t,\partial_{\ox,t}\phi_0(\ox,t)):=\sigma(\ox,t).
\end{align*}
From \eqref{d4} we see  that for $\ox$ such that \eqref{d6} holds with $<$, there is another forward null bicharacteristic that passes over $\sigma(\ox,t)$ at $s=0$, namely
\begin{align*}
\gamma_r(s;\ox,t):=(F(\ox)+2s\xi^r_1(\ox), \ox+2s\oxi^r(\ox),t+2s,\xi_1^r(\ox),\oxi^r(\ox),-1),
\end{align*}
where $\xi^r(\ox)$ satisfies for some $c(\ox)\neq 0$:
\begin{align}\label{d8}
\xi^r(\ox)-\xi^i(\ox)=c(\ox)(1,-\nabla F(\ox)) \text{ and }|\xi^r(\ox)|=1.
\end{align}
The equations \eqref{d8} are readily solved to yield
\begin{equation*}\begin{split}\label{xir2}
    \xi_1^r & = \xi_1^i-\frac{2}{1+|\nabla F|^2}( \xi_1^i-\langle 
\oxi^i,\nabla F \rangle ), \\
    \oxi^r & = \oxi^i+\frac{ 2\nabla F }{ 1+|\nabla F|^2 }( \xi_1^i - \langle \oxi^i,\nabla F \rangle ).
\end{split}
\end{equation*}
Observe that
\begin{align}\label{d8z}
\xi^i_1(\ox)-\nabla F(\ox)\cdot \oxi^i(\ox)= 0
\Leftrightarrow \oxi^r(\ox)  = \oxi^i(\ox),
\end{align}
 the case where both 
$\pi_{(x,t)}\gamma_i$ and  $\pi_{(x,t)}\gamma_r$ are tangent to $\partial M$ at $(F(\ox),\ox,t)$.   
We call $\gamma_r$ the \emph{reflected} null bicharacteristic corresponding to $\gamma_i$.  
This discussion motivates the following definition.

\begin{defn}\label{gsill}
{\rm (a)} The \emph{grazing set} and the \emph{illuminable region} are respectively
\begin{align}\label{grill3}
\begin{split}
&G_{\phi_i} := \{ (F(\ox),\ox,t)| \ \langle \oxi^i,\nabla F \rangle - \xi_1^i=0 \}, \\ 
&I_- :=\{ (F(\ox),\ox,t)| \ \langle \oxi^i,\nabla F \rangle - \xi_1^i >0 \}. 
\end{split}
\end{align}
In \eqref{grill3} we require that $\ox\in \overline{B(0,R)}$ and $t\in [-T,T]$ for some positive $R,T$ that may vary from one example (that is, choice of $\mathcal{O}$ and $\phi_i$) to another.   

{\rm (b)} For $s_0>0$ the \emph{reflected flow map} is the map\footnote{Here we use the map $(\ox,t)\to (F(\ox),\ox,t)$ to identify $G_{\phi_i}\cup I_-$ with a subset of $\ox,t$ space.} 
\begin{align} \label{grill4}
\begin{split}
&Z_r:[0,s_0]\times (G_{\phi_i}\cup I_-)\to M \text{ where }\\
&Z_r(s,\ox,t):=\pi_{(x,t)}\gamma_r(s;\ox,t)=(F(\ox)+2s\xi^r_1(\ox), \ox+2s\oxi^r(\ox),t+2s).
\end{split}
\end{align}
\end{defn}

\begin{ass}\label{grill5}
 With our normalization that $F$ has a max at $\ox=0$ with $F(0)=1$, we always choose the incoming phase $\phi_i=-t+\psi_i(x)$ so that $(1,0,t)\in G_{\phi_i}$ for all $t$.
\end{ass}

\begin{rem}\label{gr5y}     
\textup{1. By \eqref{grill3} Assumption \ref{grill5} holds if and only if $\xi_1^i(0)=\partial_{x_1}\psi_i(1,0)=0$, since $\nabla F(0)=0$.   For $\gamma_i$ as in \eqref{d6z} we then have 
\begin{align}\label{gr5z}
\gamma_i(s;0,t)=(1,2s\oxi^i(0),t+2s,0,\oxi^i(0),-1).
\end{align}
The incoming characteristic $\pi_{(x,t)}\gamma_i(s;0,t)$ is then tangent to $\partial M$ at $(1,0, t)$ and lies in $T^*\mathring{M}$ for $s\neq 0$.} 

\textup{2. When $\psi_i(x)=\theta\cdot x$ for $\theta=(\theta_1,\otheta)\in S^{n-1}$ (plane waves), we have $$(1,0,t)\in G_{\phi_i}\Leftrightarrow \theta_1=0\Leftrightarrow \psi_i(x)=\ox\cdot\otheta.$$   When
$\psi_i(x)=|x-b|$ for $b=(1,\ob)\in\mathbb{R}^n$, $\ob\neq 0$ (spherical waves), we have  $$(1,0,t)\in G_{\phi_i}\Leftrightarrow b_1=1\Leftrightarrow \psi_i(x)=|(x_1-1,\ox-\ob)|.$$}
\textup{3.   For convenient reference we use \eqref{grill3} to record here that in the planar (resp. spherical) case, 
\begin{align}\label{gr5}
\ox\in\pi_{\ox}G_{\phi_i}\Leftrightarrow \nabla F(\ox)\cdot\otheta=0,\;(\text{resp., }F(\ox)-1-\nabla F(\ox)\cdot (\ox-\ob)=0).
\end{align}}
\end{rem}


\begin{ass}[Reflected flow map assumption (RFM)]\label{rfa}
There exist $s_0>0$ and positive constants $R,T$ as in Definition \ref{gsill} such that the map $Z_r$ is a $C^\infty$ diffeomorphism from $[0,s_0]\times I_-$ onto its range, which extends to a homeomorphism from $[0,s_0]\times (G_{\phi_i}\cup I_-)$ onto its range.
\end{ass}

Let us denote the ranges of $Z_r$ on $[0,s_0]\times (G_{\phi_i}\cup I_-)$ and $[0,s_0]\times I_-$ by $\mathcal{J}_r$ and $\mathcal{J}'_r$ respectively.  Write the map $Z_r^{-1}$ defined on $\mathcal{J}_r$ as 
\begin{align}\label{d9}
Z_r^{-1}(y_1,\oy,t')=(s,\ox,t)\in [0,s_0]\times (G_{\phi_i}\cup I_-).
\end{align}
The method of characteristics (see \cite{williams2022}, for example) constructs the reflected phase $\phi_r$ on $\mathcal{J}_r$  so that the following properties hold when \eqref{d9} holds:
\begin{align}\label{d10}
\begin{split}
&\phi_r(y_1,\oy,t')=\phi_i(F(\ox),\ox,t), \ d\phi_r(y_1,\oy,t')=(\xi^r(\ox),-1).
\end{split}
\end{align}
Thus $\phi_r$ is a characteristic phase (that is, $p(y,t',d\phi_r(y,t'))=0$) such that both $\phi_r$ and $d\phi_r$ are constant along
any  particular reflected characteristic $\pi_{(x,t)}\gamma_r(s;\ox,t)$.   

\Remark
When Assumption \ref{rfa} holds, the formulas \eqref{d10} show that 
\begin{align*}
\phi_r\in C^\infty(\mathcal{J}'_r) \text{ but we just have }\phi_r\in C^1(\mathcal{J}_r).
\end{align*}
In fact, $\phi_r$ generally fails to be in $C^2(\mathcal{J}_r)$; the singularity in $\phi_r$ is due to the singularity of  $Z_r^{-1}$
at $[0,s_0]\times G_{\phi_i}$. 


\subsection{Orders of grazing}
\label{decomposition}

To formulate the grazing set assumption GS, we first recall the classical decomposition
\begin{align*}
T^*\partial M\setminus 0=E\cup H\cup G
\end{align*}
into \emph{elliptic}, \emph{hyperbolic}, and \emph{glancing} sets.   
If $\sigma\in T^*\partial M\setminus 0$, we say that $\sigma$ belongs to $E$, $H$, or $G$ if the number of elements in  $(i^*)^{-1}(\sigma)\cap p^{-1}(0)$ is zero, two, or one respectively.   The sets $E$ and $H$ are conic open subsets of $T^*\partial M\setminus 0$, and $G$ is a closed conic hypersurface in  $T^*\partial M\setminus 0$.

Below we let $\beta(x,t):=x_1-F(\ox)$ denote the defining function of $\partial M$.  
\begin{defn}[\cite{melrosesjostrand1978cpam}]\label{d13}
Let $\sigma=(m,\nu)\in G$ and suppose $(i^*)^{-1}(\sigma)\cap p^{-1}(0)=\{\rho\}$, where $\rho\in T^*_mM$.   
We say $\sigma\in G^l$, the glancing set of order at least $l\geq 2$, if 
\begin{align*}
p(\rho)=0 \text{ and }H_p^j \beta(\rho)=0\text{ for }0\leq j<l.   
\end{align*}
Thus, $G=G^2\supset G^3\supset \dots\supset G^\infty$.  

We say $\sigma\in G^{l}\setminus G^{l+1}$, the set of glancing points of exact order $l$, if $\sigma\in G^l$ and $H^l_p\beta(\rho)\neq 0$.     
Consider points $\sigma\in G^{2k}\setminus G^{2k+1}$, $k\geq 1$,  such that $H^{2k}_p\beta(\rho)>0$.    When $k=1$, such a point $\sigma$ is a  classical \emph{diffractive point} as studied in \cite{melrose1975duke} or \cite{cheverry1996}.   When $k\geq 1$ we refer to $\sigma$ as a grazing (or diffractive) point of order $2k$, and we write 
\begin{align}\label{q3z}
\sigma\in G^{2k}_d\setminus G^{2k+1} \ \Leftrightarrow \ p(\rho)=0, \ H_p^j \beta(\rho)=0\text{ for }0\leq j<2k,\text{ and }H^{2k}_p\beta(\rho)>0.
\end{align}  
\end{defn}

\Remarks
1. If $\sigma\in G^{2k}_d\setminus G^{2k+1}$, let $\gamma(s)$ denote the  bicharacteristic of $p$ such that $\gamma(0)=\rho$.    Then $\gamma$ is tangent to $\partial T^*M$ at $\rho$ and lies $T^*\mathring{M}$ for small $s\neq 0$.

\noindent
2. \emph{Gliding points} of order $2k$, $\sigma\in G^{2k}_g\setminus G^{2k+1}$, are defined as in \eqref{q3z} with the single change $H^{2k}_p\beta(\rho)<0$.   If $\sigma\in G^l\setminus G^{l+1}$ for some odd $l$, we call $\sigma$ an \emph{inflection point} of order $l$.

\begin{defn}[Grazing points of order $\infty$]\label{q2a}
Let $\sigma\in G^\infty$ and suppose $(i^*)^{-1}(\sigma)\cap p^{-1}(0)=\{\rho\}$.
We say that $\sigma$ is a \emph{diffractive point of order $\infty$} and write $\sigma\in G^\infty_d$ if the 
bicharacteristic $\gamma(s)$ of $p$ such that $\gamma(0)=\rho$ lies $T^*\mathring{M}$ for small $s\neq 0$.
\end{defn}

\begin{defn}[Glancing points of diffractive type]\label{q2b}
We denote by 
\begin{align*}
\mathcal{G}_d:=\cup_{k=1}^\infty \left(G^{2k}_d\setminus G^{2k+1}\right)\cup G^\infty_d
\end{align*}
the set of \emph{glancing points of diffractive type}.  More simply, we refer to $\mathcal{G}_d$ as  the set of \emph{grazing points}.
\end{defn}

For a fixed $t\in\mathbb{R}$  let $\rho=(F(\ox),\ox,t,\xi_1^i(\ox),\oxi^i(\ox),-1)\in\partial T^*M\cap p^{-1}(0)$ and observe that\footnote{For the third equivalence we used \eqref{d8z} and the definition of $G$.} 
\begin{equation}\label{d12}
\begin{split}
(F(\ox),\ox,t)\in G_{\phi_i} 
\ \Leftrightarrow \ & \oxi_i(\ox)\cdot \nabla F(\ox)-\xi_1^i(\ox)=0 \\ 
\Leftrightarrow \
& 0=H_p\beta(\rho)=2\xi_1^i(\ox)-\nabla F(\ox)\cdot 2\oxi^i(\ox) \\ 
\Leftrightarrow \ & i^*\rho=(\ox,t,\xi^i_1(\ox)\nabla F(\ox)+\oxi^i(\ox),-1)\\
 & =(\ox,t,d\phi_0(\ox,t)):=\sigma\in G.
\end{split}
\end{equation}




The next proposition is a simple consequence of the strict convexity of $\mathcal{O}$.  

\begin{prop}\label{d12u}
Let  $\beta(x,t)=x_1-F(\ox)$ and let 
$$\urho=(1,0,t,\xi_1^i(0),\oxi^i(0),-1)\text{ and }\usigma=i^*\urho=(0,t,\xi^i_1(0)\nabla F(0)+\oxi^i(0),-1).$$
Under  Assumptions \ref{n1z} (strict concavity of $F$) and \ref{grill5} ($(1,0,t)\in G_{\phi_i}$), we have
$\usigma\in\mathcal{G}_d$.  

\end{prop}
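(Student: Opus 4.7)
The plan is to reduce the claim to the behavior of the defining function $\beta(x,t) = x_1 - F(\ox)$ evaluated along the tangent bicharacteristic through $\urho$, and then read off the diffractive-type condition directly from the strict concavity of $F$ at its maximum.

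First I would record the consequences of Assumption \ref{grill5}. By Remark \ref{gr5y}(1) we have $\xi_1^i(0)=\partial_{x_1}\psi_i(1,0)=0$, and the eikonal equation then gives $|\oxi^i(0)|=1$; in particular $\oxi^i(0)\neq 0$. The equivalences in \eqref{d12} applied at $\ox=0$ already supply $\usigma\in G$ and $H_p\beta(\urho)=0$, so the point is at least a glancing point and the $j=0,1$ cases in Definition \ref{d13} are automatic.

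Next I would compute the defining function along the tangent bicharacteristic \eqref{gr5z}: substitution of $\gamma_i(s;0,t)=(1,2s\,\oxi^i(0),t+2s,0,\oxi^i(0),-1)$ into $\beta$ yields
\[
\beta(\gamma_i(s;0,t)) = 1 - F(2s\,\oxi^i(0)).
\]
Under Assumption \ref{n1z}, $F$ is strictly concave with a global max at $\ox=0$ with $F(0)=1$, so for every $s\neq 0$ the point $2s\,\oxi^i(0)$ is nonzero, giving $F(2s\,\oxi^i(0))<1$ and hence $\beta(\gamma_i(s;0,t))>0$. Thus $\gamma_i(\,\cdot\,;0,t)$ lies in $T^*\mathring{M}$ for every small $s\neq 0$.

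Finally I would split on whether the derivatives $H_p^j\beta(\urho)$ ever fail to vanish. If they all vanish, then $\usigma\in G^\infty$, and the positivity of $\beta\circ\gamma_i$ just established places $\usigma\in G^\infty_d$ by Definition \ref{q2a}. Otherwise, let $l\geq 2$ be the smallest index with $H_p^l\beta(\urho)\neq 0$; the standard identity $\tfrac{d^j}{ds^j}(\beta\circ\gamma_i)\big|_{s=0}=H_p^j\beta(\urho)$ combined with Taylor expansion gives
\[
\beta(\gamma_i(s;0,t)) = \tfrac{1}{l!}H_p^l\beta(\urho)\,s^l + O(s^{l+1}),
\]
and positivity of the left side on both sides of $s=0$ forces $l$ to be even, $l=2k$, with $H_p^{2k}\beta(\urho)>0$. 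By \eqref{q3z} this means $\usigma\in G^{2k}_d\setminus G^{2k+1}$. In either case $\usigma\in\mathcal{G}_d$ by Definition \ref{q2b}. I do not anticipate any real obstacle: the whole argument is driven by strict concavity of $F$ at its apex, the eikonal equation at the grazing direction, and the interpretation of $H_p^j\beta$ as Taylor coefficients of $\beta$ along the bicharacteristic; the only mild care needed is the case split between finite and infinite order.
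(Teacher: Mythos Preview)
Your proof is correct and follows essentially the same approach as the paper's: both use \eqref{d12} to get $H_p\beta(\urho)=0$, invoke strict concavity of $F$ to ensure $\gamma_i$ stays in $T^*\mathring{M}$ for $s\neq 0$, and then split on whether all higher $H_p^j\beta(\urho)$ vanish, using the Taylor expansion of $\beta\circ\gamma_i$ in the finite-order case. Your version is slightly more explicit (computing $\beta(\gamma_i(s;0,t))=1-F(2s\,\oxi^i(0))$ and noting $\oxi^i(0)\neq 0$ from the eikonal equation), but the logic is identical.
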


\begin{proof}
By \eqref{d12} with $\ox=0$ we have $H_p\beta(\urho)=0$.  For $\gamma_i(s):=\gamma_i(s;0,t)$ as in \eqref{d6z} we have 
\begin{align}\label{d12z}
H_p^\ell\beta(\gamma_i(s))=\left(\frac{d}{ds}\right)^\ell \beta(\gamma_i(s))\text{ for }\ell\in\mathbb{N}.
\end{align}
If $H_p^\ell(\beta)(\urho)=0$ for all $\ell\geq 2$, then $\usigma\in G^\infty_d$, since strict convexity of $\mathcal{O}$ implies $\gamma_i$ lies in  $T^*\mathring{M}$ for $s\neq 0$.\footnote{Recall Remark \ref{gr5y}(1).}

Otherwise, let $m$ be the smallest exponent for which $H_p^m\beta(\urho)\neq 0$.  With \eqref{d12z} we see that $m=2k$ must be even and $H_p^{2k}\beta(\urho)>0$, since otherwise $\gamma_i$ enters $\mathcal{O}$ for some $s\neq 0$ small.  Thus, by \eqref{q3z} $\usigma\in G_d^{2k}\setminus G^{2k+1}.$
\end{proof}

\begin{rem}\label{d12x}
\textup{The proof of Proposition \ref{d12u} shows more generally that if $$\sigma=(\ox,t,\xi^i_1(\ox)\nabla F(\ox)+\oxi^i(\ox),-1)\in G,$$ then   
$\sigma$ must lie in $\mathcal{G}_d$ since 
$\gamma_i(s,\ox,t)\text{ lies in }T^*\mathring{M} \text{ for }s\neq 0.$}


\end{rem}


\begin{ass}[Grazing set assumption (GS)]\label{gs}
Let  $\beta$, $\usigma\in\mathcal{G}_d$, and $\urho$ be as in Proposition \ref{d12u}.  
 There exists an open set $U\ni (1,0,t)$ in $\mathbb{R}^n\times \mathbb{R}_t$ and a $C^1$ function $\zeta:U\to \mathbb{R}$ with $d\zeta\neq 0$ on $U$ such that:

{\rm (a)} $G_{\phi_i}\cap U=\{(x,t)\in U: \zeta(x,t)=0 \text{ and }\beta(x,t)=0\}$;

{\rm (b)} $H_p\zeta(\urho)\neq 0$;

{\rm (c)} There exists an open set $V\ni\usigma$ in $T^*\partial M$ such that $G\cap \textrm{Graph }d\phi_0\cap V\subset \mathcal{G}_d$.\footnote{Recall $\phi_0=\phi_i\circ i:\partial M\to \mathbb{R}$.}
\end{ass}

Thus, the assumption requires that $G_{\phi_i}$ be a $C^1$ codimension two submanifold of $\mathbb{R}^n\times \mathbb{R}_t$ near $(1,0,t)$.\footnote{Part (b) implies $d\zeta\wedge d\beta\neq 0$ at $(1,0,t)$.}  In the  results of \S \ref{gsa} where the GS assumption holds, conditions (b) and  (c) are seen to follow from the strict convexity of $\mathcal{O}$.

For $\usigma\in G$ as in \eqref{d12} it will be  convenient to have a simple reformulation of the condition  that $\usigma\in G^{2k}_d\setminus G^{2k+1}$  in terms of $F$.   Let 
$$\gamma_i(s;0,t)=(1+2s\xi^i_1(0),2s\oxi^i(0),t+2s,\xi_1^i(0),\oxi^i(0),-1)$$
be the incoming null bicharacteristic such that $\gamma_i(0;0,t)=\urho$.   For $F$ as in Assumption \ref{n1z} write the Taylor expansion of $F$ at $0$ as\footnote{Strict concavity of $F$ implies $p_1(\ox)=0$.} 
\begin{align}\label{TF}
F(\ox)=1+p_2(\ox)+p_3(\ox)+\cdots+p_{2k}(\ox)+O(|\ox|^{2k+1}),
\end{align}
where $p_j$ is a homogeneous polynomial of degree $j$.  With $\beta(x,t)=x_1-F(\ox)$ we have
\begin{align}
\beta(\gamma_i(s))=2s\xi^i_1(0)-\left[p_2(2s\oxi^i(0))+\cdots+p_{2k}(2s\oxi^i(0))+O(s^{2k+1})\right].
\end{align}
A direct computation using $H_p^j\beta(\gamma_i(s))=\left(\frac{d}{ds}\right)^j\beta(\gamma_i(s))$   shows that
\begin{align}\label{d14}
\begin{split}
&\mathrm{(a)} \; H_p^j\beta(\urho)=0 \text{ for }1\leq j<2k\text{ and }H_p^{2k}\beta(\urho)>0 \text{ if and only if }\\
&\mathrm{(b)} \; \xi^i_1(0)-\nabla F(0)\cdot\oxi^i(0)=\xi^i_1(0)=0, \;p_j(\oxi^i(0))=0 \text{ for }1<j<2k,\\
& \qquad \text{and } p_{2k}(\oxi^i(0))<0.
\end{split}
\end{align}

\begin{rem}\label{d14z}
\textup{When $\phi_i(x,t)=-t+|x-b|$ with $b=(1,\ob)$, $\ob\neq 0$,  the set $\pi_x G_{\phi_i}$ is the intersection of the smooth hypersurfaces 
$S_1=\{x:\mathcal{H}_1(x):=x_1-F(\ox)=0\}$ and $S_2=\{x:\mathcal{H}_2(x):=x_1-1-\nabla F(\ox)\cdot (\ox-\ob)=0\}$.
A normal vector to $S_1$ (resp. $S_2$) at $x=(1,0)$ is $n_1=(1,0)$ (resp.  $n_2=(1,\nabla^2 F(0)\ob)$).   Writing $F$ as in \eqref{TF}, we see that $\nabla^2F(0)\ob=\nabla^2p_2(0)\ob$.   We claim\footnote{The first equivalence follows from the Taylor expansion of $p_2$.   To see $\Leftarrow$ in the second equivalence observe that if $p_2(\ob)=0$, then $p_2(\ox)$ has the form $c_1(\ob^\perp\cdot\ox)^2$ or $c_2(\ob^\perp\cdot\ox)(\ell\cdot\ox)$ for some $\ell\in \mathbb{R}^{n-1}$ such that $\ell\cdot \ob\neq 0$ and real $c_i$.   If the second form holds with $c_2\neq 0$, then $F$ is not strictly concave, so $p_2=c_1(\ob^\perp\cdot\ox)^2$ for some $c_1\leq 0$ and hence $\nabla^2p_2(0)\ob=0$.}
\begin{align}\label{d14y}
p_2(\ob)\neq 0\Leftrightarrow \langle\nabla^2p_2(0)\ob,\ob\rangle\neq 0\Leftrightarrow \nabla^2p_2(0)\ob\neq 0.
\end{align}
With \eqref{d14} this implies that $n_1$ and $n_2$ are linearly independent if and only if $\usigma=(0,t,-\frac{\ob}{|\ob|},-1)\in G^2_d\setminus G^3.$  This is the \emph{only} case where  the implicit function theorem can be applied directly to $\mathcal{H}_1$ and $\mathcal{H}_2$ to show that the GS assumption holds for incoming spherical waves.}   

\textup{When $\phi_i(x,t)=-t+\otheta\cdot \ox$, the set $\pi_{\ox}G_{\phi_i}$ is defined by $\nabla F(\ox)\cdot\otheta=0$.  The argument for \eqref{d14y} shows that $\nabla^2F(0)\otheta\neq 0\Leftrightarrow \usigma=(0,t,\otheta,-1)\in G^2_d\setminus G^3$, so  this is the only case where the implicit function theorem applies directly to verify the GS assumption for incoming plane waves.}
\end{rem}

\subsection{Main results}

\S \ref{siw} proves a strengthened, global version of the reflected flow map assumption (RFM) for spherical incoming waves.

\begin{theo}
Let the convex obstacle $\mathcal{O}\subset \mathbb{R}^n$ be as Assumption \ref{n1z}, and consider incoming waves associated to the spherical phase function 
$\phi_i(x,t)=-t+|x-b|$, where $b\in \mathbb{R}^n\setminus \overline{\mathcal{O}}$.  In the definition of the grazing set $G_{\phi_i}$ and the illuminable region $I_-$, Definition \ref{grill}, allow $R$ and $T$ to be any positive numbers.   Then the RFM assumption, Assumption \ref{rfa}, holds with $s_0$ any positive number.  
\end{theo}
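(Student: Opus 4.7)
The plan is to verify the three conditions of Assumption \ref{rfa} in the spherical setting: $C^\infty$ regularity of $Z_r$, non-vanishing of its Jacobian on $[0,s_0]\times I_-$, and global injectivity extending to a homeomorphism on $[0,s_0]\times(G_{\phi_i}\cup I_-)$. Since $Z_r(s,\ox,t)=(\tilde Z_r(s,\ox),t+2s)$ with $\tilde Z_r(s,\ox):=(F(\ox)+2s\xi_1^r(\ox),\ox+2s\oxi^r(\ox))$ independent of $t$, it suffices to prove all three claims for $\tilde Z_r$. Smoothness is immediate: because $b\notin\overline{\mathcal{O}}$, $\psi_i=|x-b|$ is $C^\infty$ on a neighborhood of $\partial\mathcal{O}$, hence $\xi^i$ is smooth in $\ox$, and the reflection formulas display $\xi^r$ as a rational function of $(\xi^i,\nabla F)$ with nonvanishing denominator $1+|\nabla F|^2$.

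For the Jacobian, a short computation using the identity
\[
\xi_1^r-\nabla F\cdot\oxi^r=-(\xi_1^i-\nabla F\cdot\oxi^i),
\]
which follows directly from the reflection formulas, shows that $\det D_{(s,\ox)}\tilde Z_r|_{s=0}=2(\nabla F\cdot\oxi^i-\xi_1^i)$, strictly positive on $I_-$ and vanishing precisely on $\pi_{\ox}G_{\phi_i}$. To propagate positivity to $s\in(0,s_0]$ for arbitrary $s_0>0$, I would set up the Jacobi/Riccati equation for $D\tilde Z_r$ along the reflected bicharacteristic, whose coefficient matrix is governed by the second fundamental form of $\partial\mathcal{O}$ (positive definite by strict concavity of $F$) together with the Hessian of $\psi_i=|x-b|$ (nonnegative tangent to its level spheres). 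These two ingredients combine to force the reflected wavefronts $\Sigma_s:=\tilde Z_r(\{s\}\times I_-)$ to remain uniformly convex for all $s\geq 0$, so no caustic forms outside $\overline{\mathcal{O}}$.

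The main obstacle is global injectivity, which I would handle in two cases. Suppose $\tilde Z_r(s_1,\ox_1)=\tilde Z_r(s_2,\ox_2)=y$ with $(s_i,\ox_i)$ distinct, set $x_i:=(F(\ox_i),\ox_i)$, and recall that for the physical reflection both segments $[b,x_i]$ and $[x_i,y]$ must lie in $\overline{M}$. In the parallel case $\xi^r(\ox_1)=\xi^r(\ox_2)$ with $x_1\neq x_2$, the two reflected rays are collinear through $y$; but then strict convexity of $\mathcal{O}$ forces the open segment $(x_1,x_2)$ to lie in the interior of $\mathcal{O}$, contradicting the fact that this segment sits on one of the reflected rays. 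In the non-parallel case, the reflection law forces each $x_i$ to be a critical point of $g_y(x):=|x-b|+|x-y|$ on $\partial\mathcal{O}$ with $\nabla g_y(x_i)$ pointing into $\mathcal{O}$; I would then use the strict convexity of $g_y$, whose sublevel sets are strictly convex bodies with foci $b,y$, together with strict convexity of $\mathcal{O}$ to show that the smallest such sublevel set meeting $\partial\mathcal{O}$ supports it at a unique point from outside $\mathcal{O}$, with the inward-normal condition and the physical-path constraint excluding all other critical points. This forces $x_1=x_2$ and hence $s_1=s_2=|y-x_1|/2$. The homeomorphism extension to the closed domain then follows from continuity of $\tilde Z_r$, injectivity, and the standard closed-map argument on compact subsets $\overline{B(0,R)}\times[-T,T]$.
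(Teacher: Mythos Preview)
Your proposal has the right architecture, but both of the two main steps are left as sketches rather than proofs, and in each case the missing work is exactly the substance of the argument.

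\textbf{Jacobian.} You correctly compute $\det D_{(s,\ox)}\tilde Z_r|_{s=0}=2(\nabla F\cdot\oxi^i-\xi_1^i)$, but the determinant is a polynomial of degree $n-1$ in $s$ (since the bicharacteristics of $\Box$ are straight lines and $D\tilde Z_r$ is affine in $s$), so positivity at $s=0$ says nothing about $s>0$. Your appeal to a Jacobi/Riccati equation and ``uniform convexity of the reflected wavefronts $\Sigma_s$'' is not carried out, and two points need care: first, strict concavity of $F$ in the sense of Assumption~\ref{n1z} gives only $\nabla^2F\le 0$, not $<0$, so the second fundamental form is only positive \emph{semi}-definite; second, $\Sigma_s=\tilde Z_r(\{s\}\times I_-)$ is not a level set of $\psi_r$, so ``convexity of the wavefront'' does not translate directly into a statement about $\det D\tilde Z_r$. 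The paper handles this by an explicit matrix factorization: writing $A=B+2sC\,\partial_{\ox}\oxi^r$ with $B=I-(\oxi^r\otimes\nabla F)/\xi_1^r$ and $C=I+(\oxi^r\otimes\oxi^r)/(\xi_1^r)^2$, one shows $B^TK$ and $B^TL$ are symmetric and positive semi-definite (these encode, respectively, the convexity of $|x-b|$ and the concavity of $F$), whence $j(s,\ox,t)\ge 2(\langle\nabla F,\oalpha\rangle-\alpha_1)>0$ for \emph{all} $s\ge 0$. This is the rigorous form of your intuition, but it requires the computation.

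\textbf{Injectivity.} Your parallel case is fine (a reflected ray from a point of $G_{\phi_i}\cup I_-$ cannot re-meet $\partial\mathcal O$, by convexity and the sign of $\xi^r\cdot\nu$). In the non-parallel case your Fermat-principle idea is attractive, but the assertion that the inward-gradient critical point of $g_y(x)=|x-b|+|x-y|$ on $\partial\mathcal O$ satisfying the physical-path constraint is unique is not proved. When $[b,y]\cap\overline{\mathcal O}=\emptyset$ the smallest-ellipsoid argument does give a unique \emph{global} minimum, but you must still rule out other local minima or saddles with inward gradient; when $[b,y]$ meets $\overline{\mathcal O}$ the ellipsoids meet $\mathcal O$ from the start and the picture changes. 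The paper bypasses this entirely by proving the monotonicity inequality
\[
\langle \xi^r(\ox^*)-\xi^r(\ox),\,(F(\ox^*)-F(\ox),\ox^*-\ox)\rangle>0\qquad(\ox\ne\ox^*),
\]
from which injectivity of $\tilde Z_r$ on $[0,\infty)\times(G_{\phi_i}\cup I_-)$ follows in two lines: if $\tilde Z_r(s,\ox)=\tilde Z_r(s^*,\ox^*)$ the same inner product equals $-2(s+s^*)(1-\langle\xi^r(\ox^*),\xi^r(\ox)\rangle)\le 0$. The inequality itself is obtained by combining concavity of $F$ (to pass from $\langle\nabla F,\ox^*-\ox\rangle$ to $F(\ox^*)-F(\ox)$) with the elementary fact $\langle\alpha(\ox^*)-\alpha(\ox),\,\rho(\ox^*)\alpha(\ox^*)-\rho(\ox)\alpha(\ox)\rangle\ge 0$ for unit vectors, then tracking the equality cases.

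In short: your geometric program (wavefront convexity; Fermat's principle) is a reasonable alternative to the paper's algebraic route (matrix positivity; monotone vector field), but as written both key steps are promissory rather than proved.
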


The main result of \S \ref{ciw} is a verification of the RFM assumption for general convex incoming waves.

\begin{theo}
Let the convex obstacle  $\mathcal{O}\subset \mathbb{R}^n$ be as \eqref{defo}, and consider incoming waves associated to any incoming phase function of the form $\phi_i(x,t)=-t+\psi_i$, where $\psi_i:\Omega\to \mathbb{R}$ is $C^\infty$, convex, and satisfies 
$$(\partial_{x_1}\psi_i)^2 + |\nabla\psi_i|^2=1$$ on some $\mathbb{R}^n$-open set $\Omega\ni (1,0)$.   Then the RFM assumption, Assumption \ref{rfa}, holds with $s_0$ and $T$ any positive numbers provided $R$ is taken sufficiently small.\footnote{Here again $R$ and $T$ are the parameters appearing in Definition \ref{grill}.}

\end{theo}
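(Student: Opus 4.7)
The plan is to reduce the RFM assumption to properties of the purely spatial reflected flow map
\[
\Psi(s,\ox):=(F(\ox),\ox)+2s\,\xi^r(\ox)\colon [0,s_0]\times \overline{B(0,R)}\to\mathbb{R}^n,
\]
and then to exploit the freedom of choosing $R$ small (with $s_0,T$ held fixed) to linearize the geometry around the unique grazing point $(1,0)$. Because the $t$-component of $Z_r$ is just $t+2s$, the required smoothness, injectivity, and extension properties of $Z_r$ are equivalent to the analogous statements for $\Psi$ restricted to the appropriate subsets, and $T$ plays no role beyond bounding a compact time slab. Smoothness of $\xi^r$ on a neighborhood of $\ox=0$ follows from the rational reflection formulas after \eqref{d8} together with $F,\psi_i\in C^\infty$, so $\Psi\in C^\infty$.

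The first substantive step is to show that $\Psi$ is a local $C^\infty$ diffeomorphism on $[0,s_0]\times(I_-\cap \overline{B(0,R)})$ by computing the Jacobian directly. At $s=0$, a Schur complement using the lower-right $I$ block of
\[
J|_{s=0}=\begin{pmatrix}2\xi^r_1&\nabla F^T\\ 2\oxi^r&I\end{pmatrix}
\]
yields $\det J|_{s=0}=2(\xi^r_1-\oxi^r\cdot\nabla F)$, and substituting the reflection identity $\xi^r=\xi^i+c(1,-\nabla F)$ with the explicit value of $c$ converts this into $\det J|_{s=0}=2(\oxi^i\cdot\nabla F-\xi^i_1)$, which is strictly positive on $I_-$ by definition. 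For $s\in(0,s_0]$, $\det J(s,\ox)$ is a polynomial in $s$ whose coefficients are smooth expressions in $\nabla F,\nabla^2F$ and derivatives of $\psi_i$ up to order two; strict concavity of $F$ and convexity of $\psi_i$ make these coefficients sign-compatible with the $s=0$ term, and for $R$ small enough depending on $s_0$ the determinant stays strictly positive throughout.

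For global injectivity of $\Psi$ on $[0,s_0]\times((G_{\phi_i}\cup I_-)\cap\overline{B(0,R)})$, suppose $\Psi(s_1,\ox_1)=\Psi(s_2,\ox_2)$ with the pairs distinct; two reflected rays then emanate from $\partial\mathcal O$ at unit speed and meet at a common spatial point. Closing this configuration with the boundary chord between the two starting points gives a triangle whose existence I would rule out by combining strict convexity of $\mathcal O$ (the chord lies strictly inside $\mathcal O$) with convexity of $\psi_i$ (the outgoing direction family $\{\xi^r(\ox)\}$ is divergent as $\ox$ varies): for $R$ small the chord has length $O(R)$ while the angular spread of $\xi^r$ is $O(R)$, and a scaled triangle-inequality argument closes the case. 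An equivalent, cleaner packaging is to use \eqref{d10} as the definition of a reflected phase $\phi_r$ on $\Psi([0,s_0]\times I_-)$: once the local diffeomorphism property is in place, $\phi_r$ is well-defined and $C^\infty$, satisfies $|\nabla\phi_r|=1$, and its gradient flow inverts $\Psi$ and thus gives injectivity at once. Continuity of $\Psi$ and of $\xi^r$ across $G_{\phi_i}$ together with compactness of the closed domain then upgrades the diffeomorphism on the open part to a homeomorphism onto its image over the closure. The main obstacle is this global injectivity: since $s_0$ is arbitrary the argument cannot be purely perturbative, and one must genuinely exploit both convexities—of the obstacle and of the incoming phase—to prevent the reflected rays from focusing or crossing over the full propagation time, with the smallness of $R$ converting this qualitative statement into a quantitative estimate.
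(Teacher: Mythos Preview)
Your reduction to the spatial map $\Psi$ and the computation of $\det J|_{s=0}=2(\oxi^i\cdot\nabla F-\xi^i_1)$ are correct, and you correctly identify that the two real issues are (i) positivity of the Jacobian for all $s\in[0,s_0]$ and (ii) global injectivity. However, both of your proposed resolutions have genuine gaps.

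\textbf{Jacobian for $s>0$.} Your claim that ``for $R$ small enough depending on $s_0$ the determinant stays strictly positive'' cannot work. Near the grazing set the $s=0$ term $2(\oxi^i\cdot\nabla F-\xi^i_1)$ is arbitrarily small (it vanishes on $G_{\phi_i}$), while the higher $s$-coefficients involve $\nabla^2F$ and the Hessian of $\psi_i$ evaluated near $\ox=0$; these are fixed $O(1)$ quantities that do \emph{not} become small as $R\to 0$. So shrinking $R$ gives you no control over the sign of the $s$-polynomial on $[0,s_0]$. What the paper does instead is a structural computation: one writes the Jacobian as $2\xi_1^r\det(B)\det(I+2sB^{-1}C(B^{-1})^T(B^TK+B^TL))$ and shows that $B^TK=\nabla^2\Psi-\xi_1^i\nabla^2F$ (with $\Psi(\ox)=\psi_i(F(\ox),\ox)$) and $B^TL=(\xi_1^i-\xi_1^r)\nabla^2F$ are both symmetric positive semi-definite, the first by convexity of $\psi_i$ and concavity of $F$, the second by $\xi_1^i\le\xi_1^r$ on $G_{\phi_i}\cup I_-$. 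A conjugation argument then gives $\det(I+2sM)\ge 1$ for all $s\ge 0$, whence $j(s,\ox)\ge j(0,\ox)>0$ on $I_-$. No smallness of $R$ enters; the smallness in the theorem statement is only so that $(F(\ox),\ox)$ stays inside the domain $\Omega$ of $\psi_i$.

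\textbf{Injectivity.} Your ``cleaner packaging'' via $\phi_r$ is circular: defining $\phi_r$ by the formula \eqref{d10} on the image of $\Psi$ already presupposes that $\Psi$ is injective (otherwise $\phi_r$ is multi-valued), and a local diffeomorphism alone does not give this globally for arbitrary $s_0$. The triangle sketch is not developed and the ``$O(R)$ chord versus $O(R)$ angular spread'' heuristic does not close, again because $s_0$ is not small. The paper's argument is a direct monotonicity inequality: if $\Psi(s,\ox)=\Psi(s^*,\ox^*)$ then on the one hand
\[
\langle (F(\ox^*)-F(\ox),\ox^*-\ox),\ \xi^r(\ox^*)-\xi^r(\ox)\rangle=-2(s+s^*)(1-\langle\xi^r(\ox^*),\xi^r(\ox)\rangle)\le 0,
\]
while on the other hand the same quantity is shown to be $\ge 0$ using strict concavity of $F$ and convexity of $\psi_i$ (the ``non-focusing'' inequality $\langle (F(\ox^*)-F(\ox),\ox^*-\ox),\xi^i(\ox^*)-\xi^i(\ox)\rangle\ge 0$). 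Tracking the equality cases forces $\xi^i(\ox)=\xi^i(\ox^*)$ and then $F(\ox^*)-F(\ox)=\langle\nabla F(\ox),\ox^*-\ox\rangle$, contradicting strict concavity unless $\ox=\ox^*$. This is the missing idea you allude to in your last sentence but do not supply.
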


Unlike the RFM assumption the GS assumption becomes more difficult to study and is more likely to fail as the dimension $n$ of the obstacle increases.  
We know from \cite{ww2023} that for 2D strictly convex obstacles the GS assumption \emph{always} holds for incoming plane waves, and we show in \S \ref{natureg} that is also true for incoming spherical waves.   

One might hope that the GS assumption is always satisfied for strictly convex obstacles of dimension $n\geq 3$; after all, the obstacles have $C^\infty$ boundaries and the incoming waves are planar or spherical.   But this is not the case.    For both plane and spherical waves we provide  examples in section \S\ref{bad}  where the GS assumption fails for 3D convex obstacles.     There is reason to expect the situation to be worse for $n$D obstacles when $n\geq 4$.   In Proposition \ref{rnd} of section \S\ref{nd} we identify a class of $n-$dimensional obstacles with special symmetry for which the GS assumption holds.  Along with the 2D case, these $nD$  examples include cases of infinite order tangency.

We are mainly concerned in \S \ref{3do} to provide large classes of examples where the GS assumption holds in dimension $n=3$.  
Our main positive result on the GS assumption for 3D obstacles is the following theorem, which is stated more carefully as Theorem \ref{u1} of \S\ref{3do}.

\begin{theo}\label{u1xx}
 Let the obstacle be defined as in Assumption \ref{n1z} by $x_1=F(\ox)$.   Suppose  for some $k\in\mathbb{N}$ that 
 \begin{align}\label{u1y}
F(\ox)=1-G_{2k}(\ox)+r(\ox),
\end{align}
 where
$G_{2k}$ is a homogeneous real polynomial of degree $2k$ in $\ox=(x_2,x_3)$ such that 
\begin{align}\label{u1yy}
\nabla^2 G_{2k}(\ox)>0 \text{ for }\ox\neq 0,
\end{align}
 and $r(x)=O(|\ox|^{2k+1})$ is a $C^\infty$ remainder term.
Take the incoming phase 
to be $\phi_i=-t+|x-b|$, $b=(1,\ob)$ with $\ob\in\mathbb{R}^2\setminus \{0\}$.  Then the point $\usigma=i^*(1,0,t,0,-\frac{\ob}{|\ob|},-1)\in G_d^{2k}\setminus G^{2k+1}$ and the  GS assumption, Assumption \ref{gs}, is satisfied.


\end{theo}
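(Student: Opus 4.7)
The plan is to handle the two conclusions of Theorem \ref{u1xx} separately. For the claim $\usigma\in G^{2k}_d\setminus G^{2k+1}$, I would apply the characterization \eqref{d14} directly. Comparing \eqref{TF} with $F=1-G_{2k}+r$ gives $p_j\equiv 0$ for $2\leq j<2k$ and $p_{2k}=-G_{2k}$, while $\oxi^i(0)=-\ob/|\ob|\neq 0$, so the remaining condition $p_{2k}(\oxi^i(0))<0$ reduces to $G_{2k}(\ob/|\ob|)>0$. This follows from strict convexity of $G_{2k}$: the hypothesis $\nabla^2 G_{2k}>0$ off the origin is enough to push a one-dimensional convexity argument along segments through the origin, and combined with $G_{2k}(0)=0$ and $\nabla G_{2k}(0)=0$ it yields $G_{2k}(\ox)>0$ for $\ox\neq 0$.

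The substantive content is the GS assumption. The projection $\pi_{\ox}G_{\phi_i}$ is cut out by $\tilde\zeta(\ox):=F(\ox)-1-\nabla F(\ox)\cdot(\ox-\ob)=0$, and Euler's identity $\nabla G_{2k}(\ox)\cdot\ox=2kG_{2k}(\ox)$ gives
\[
\tilde\zeta(\ox)=-P(\ox)+g(\ox),\qquad P(\ox):=\nabla G_{2k}(\ox)\cdot\ob,
\]
with $P$ homogeneous of degree $2k-1$ and $g=O(|\ox|^{2k})$. One cannot take $\zeta=\tilde\zeta$ directly because $\nabla_{\ox}\tilde\zeta(0)=\nabla^2F(0)\ob=0$ when $k>1$: this is precisely the transversality failure flagged in Remark \ref{d14z}. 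The strategy is to produce a smooth parametrization of the zero set of $\tilde\zeta$ and use its defining function as $\zeta$.

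I would first pin down the zero set of the leading term $P$. Strict convexity makes $\nabla G_{2k}:\mathbb{R}^2\to\mathbb{R}^2$ a homeomorphism (injectivity from the strict convexity inequality; surjectivity because $\ox\mapsto G_{2k}(\ox)-\eta\cdot\ox$ is strictly convex and coercive for every $\eta$), so $\nabla G_{2k}(S^1)$ is a simple closed curve around $0$ and the normalized map $S^1\to S^1$ is a degree-one diffeomorphism. Hence $P$ vanishes on $S^1$ at exactly two antipodal points $\pm\theta_+$. Rotating coordinates in the $\ox$-plane so $\theta_+=(1,0)$ (grazing line $=\{x_3=0\}$), factor $P(\ox)=x_3^{m'}R(\ox)$. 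Using Euler a second time, $\nabla^2 G_{2k}(\theta_+)\theta_+=(2k-1)\nabla G_{2k}(\theta_+)$, the grazing condition $\nabla G_{2k}(\theta_+)\cdot\ob=0$ becomes the vanishing of the $x_2$-component of $A\ob$, where $A:=\nabla^2 G_{2k}(\theta_+)$ is positive definite; invertibility of $A$ combined with $\ob\neq 0$ then forces the $x_3$-component of $A\ob$ to be nonzero. A short direct computation identifies this $x_3$-component with $R(1,0)$, yielding $m'=1$, and since $R$ has even degree $2k-2$ and any other real zero would contradict the location of the zero set of $P$, $R$ has constant sign on $\mathbb{R}^2\setminus 0$.

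The final step is the blow-up $x_3=x_2^2 y$, which resolves the tangency. Homogeneity of $G_{2k}$ and $R$ together with the Taylor expansion of $r$ show that $\Psi(x_2,y):=x_2^{-2k}\tilde\zeta(x_2,x_2^2y)$ extends to a $C^\infty$ function near $(0,y^*)$ for an explicit $y^*$, and a direct computation gives $\partial_y\Psi(0,y^*)=-R(1,0)\neq 0$. The implicit function theorem then yields a smooth $y(x_2)$ with $\Psi(x_2,y(x_2))\equiv 0$, so the grazing curve in $\ox$-space is the smooth graph $x_3=h(x_2):=x_2^2y(x_2)$ with $h(0)=h'(0)=0$. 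Setting $\zeta(x,t):=x_3-h(x_2)$ gives a $C^\infty$ function with $d\zeta\neq 0$ everywhere and $G_{\phi_i}\cap U=\{\zeta=0=\beta\}$. For GS part (b), $H_p\zeta(\urho)=2\,\oxi^i(0)\cdot(0,1)$ equals $-2/|\ob|$ times the $x_3$-component of $\ob$, which is nonzero because $\ob\parallel\theta_+$ would force $\nabla G_{2k}(\theta_+)\cdot\ob\neq 0$ via Euler, contradicting the grazing condition. Part (c) is automatic from Remark \ref{d12x} and the strict convexity of $\mathcal O$. The main obstacle throughout is the non-transversality of Remark \ref{d14z}, and the rescaling $x_3=x_2^2y$ is precisely the device that circumvents it.
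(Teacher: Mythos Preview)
Your proposal is correct and shares the paper's core strategy: isolate the leading homogeneous part $P=\nabla G_{2k}\cdot\ob$, show it has a \emph{simple} linear factor, and then recover the grazing curve by an implicit-function argument after a blow-up. The packaging differs. The paper does not rotate; it sets $z=u/v$, writes $g(u,v)=v^{2k-1}\tilde g(z)$, and proves the root $\alpha$ of $\tilde g$ is simple by evaluating $\nabla g\cdot\ob=\langle\nabla^2G_{2k}\,\ob,\ob\rangle>0$ on the zero line, then applies the IFT to the polynomial $p(u,v;z)$ obtained from the full $\mathcal H$. You instead rotate so the zero line is $\{x_3=0\}$, deduce the simple factor from Euler's identity on $\nabla G_{2k}$ and invertibility of $A=\nabla^2G_{2k}(\theta_+)$, and resolve with the weighted substitution $x_3=x_2^2y$. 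The weighted exponent is not needed: the ordinary blow-up $x_3=x_2 w$ already works (and is exactly the paper's $z=u/v$ argument in your rotated frame), so your version gives the same conclusion with a little extra machinery. Your argument that $P$ vanishes on a single line via the homeomorphism property of $\nabla G_{2k}$ is an alternative to the paper's use of positive curvature of the level curve $\{G_{2k}=\epsilon\}$; both are fine, though your ``degree-one diffeomorphism'' claim for the radial map deserves a sentence of justification.

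One small gap worth filling: after the IFT produces $y(x_2)$ near $(0,y^*)$, you still need that \emph{every} nearby zero of $\tilde\zeta$ lies in this chart. This follows from $|x_3|\,|R(\ox)|=|g(\ox)|\le C|\ox|^{2k}$ together with $|R(\ox)|\ge c|\ox|^{2k-2}$ (your constant-sign claim for $R$), which forces $|x_3|\lesssim x_2^2$; since $\Psi(0,\cdot)$ is affine with unique zero $y^*$, this pins $x_3/x_2^2$ near $y^*$ for small $x_2$ and handles $x_2=0$ separately.
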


The next theorem, stated more carefully as Theorem \ref{A} of \S\ref{branch},  rules out \emph{branching} for a class of convex obstacles and spherical incoming waves for which grazing can occur to any finite or infinite order at $x=(1,0)$;  condition \ref{u1yy} need not hold.  Branching occurs when $\pi_{x}G_{\phi_i}$ consists of more than two smooth curves which meet and terminate at $x=(1,0)$.  In any such case the GS assumption fails of course.  

\begin{theo}
Let the obstacle $\mathcal{O}$ be defined as in Assumption \ref{n1z} by $x_1=F(\ox)$, and assume that\footnote{These conditions mean that $\partial{\mathcal{O}}$ is strictly convex with $x=(1,0)$ being an isolated zero of the Gauss curvature.} 
\begin{align}\label{u1yz}
\nabla^2F(\ox)<0\text{ for }\ox\neq 0 \text{ when }\ox\text{ is near }0.
\end{align}
Then branching does not occur for incoming spherical waves.  The set  $\pi_{x}G_{\phi_i}$ is a single continuous curve through $x=(1,0)$.

\end{theo}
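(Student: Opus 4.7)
The plan is to reduce the problem to a nondegenerate implicit function problem via a Legendre (or Gauss-map) change of variables $p=\nabla F(\ox)$, and then to transfer the resulting $C^1$ curve back to $\ox$-space through a homeomorphism.

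By \eqref{gr5}, the spatial projection of the grazing set near $(1,0)$ is $\{(F(\ox),\ox):g(\ox)=0\}$, where
\[g(\ox):=F(\ox)-1-\nabla F(\ox)\cdot(\ox-\ob).\]
Hypothesis \eqref{u1yz} makes $F$ strictly concave near $\ox=0$ in the sense of Definition \ref{def: convex}, so the gradient map $v(\ox):=\nabla F(\ox)$ is injective on a small ball about $0$; combined with Brouwer's invariance of domain, $v$ is then a homeomorphism onto an open neighborhood of $0$ in $p$-space. I introduce the Legendre transform
\[\hat F(p):=\sup_{\ox}\bigl[F(\ox)-\ox\cdot p\bigr]=F(v^{-1}(p))-p\cdot v^{-1}(p),\]
whose unique maximizer is $\ox^*(p)=v^{-1}(p)$. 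A short algebraic manipulation then gives the key identity $g(\ox)=\tilde h(v(\ox))$ with
\[\tilde h(p):=\hat F(p)+p\cdot\ob-1,\]
so that $g^{-1}(0)=v^{-1}(\tilde h^{-1}(0))$ near $\ox=0$, reducing the question to the structure of $\tilde h^{-1}(0)$ near $p=0$.

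Next I will verify that $\hat F\in C^1$ near $p=0$ with $\nabla\hat F(p)=-v^{-1}(p)$; in particular $\nabla\tilde h(0)=0+\ob=\ob\neq 0$. For $p\neq 0$ small the invertibility of $\nabla^2F(\ox^*(p))$ plus the envelope theorem deliver both the formula and smoothness of $\hat F$ away from $0$; at $p=0$, standard convex-duality regularity applied to the convex function $-F$ shows $\hat F$ is differentiable with $\nabla\hat F(0)=-v^{-1}(0)=0$, and continuity of $v^{-1}$ (from the homeomorphism above) then promotes this to $\hat F\in C^1$ on a full neighborhood of $0$. The $C^1$ implicit function theorem now exhibits $\tilde h^{-1}(0)$ as a $C^1$ graph over $\ob^{\perp}\subset\mathbb R^2$, i.e.\ a single $C^1$ curve through $p=0$. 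Pulling back through the homeomorphism $v^{-1}$ presents $\pi_{\ox}G_{\phi_i}$ as the continuous image of this $C^1$ curve, hence as a single continuous curve through $\ox=0$; composing with $\ox\mapsto(F(\ox),\ox)$ gives the claim for $\pi_x G_{\phi_i}$.

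The main obstacle will be securing $\hat F\in C^1$ at the degenerate point $p=0$: one cannot expect $C^2$ there, since the dual Hessian $-(\nabla^2 F(v^{-1}(p)))^{-1}$ blows up as $p\to 0$, consistent with $v^{-1}$ itself failing to be $C^1$ at $0$. But only $C^1$ regularity is used in the IFT step, and this follows from the strict concavity supplied by \eqref{u1yz} together with standard convex-duality regularity. The unavoidable loss of regularity upon pull-back by $v^{-1}$ is exactly what leaves the final answer continuous rather than smooth, in keeping with the statement.
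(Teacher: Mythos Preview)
Your argument is correct and takes a genuinely different route from the paper. The paper's proof is geometric: after observing that $\nabla\mathcal{H}(\ox)=-\nabla^2F(\ox)(\ox-\ob)\neq 0$ for $\ox\neq 0$ (so $\pi_{\ox}G_{\phi_i}\setminus\{0\}$ is smooth), it fixes $b=(1,-1,0)$, slices $\partial\mathcal{O}$ by half-planes $P(A^*)$ through $b$, shows each slice is a strictly convex closed curve carrying exactly one grazing point in $x_3>0$ and one in $x_3<0$, and lets the planes degenerate to $x_1=1$ to see that these points trace two smooth arcs meeting continuously at $\ox=0$; a rotation handles general $\ob$.

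Your Legendre/Gauss-map approach is more analytic and more efficient. The identity $g(\ox)=\tilde h(\nabla F(\ox))$ with $\tilde h(p)=\hat F(p)+p\cdot\ob-1$ converts the degenerate equation $g=0$ (where $\nabla g(0)=-\nabla^2F(0)\ob$ vanishes exactly at high-order grazing points; cf.\ Remark \ref{d14z}) into a nondegenerate one, since $\nabla\tilde h(0)=\ob\neq 0$ independently of the order of grazing. The $C^1$ regularity of $\hat F$ follows as you indicate from strict concavity of $F$ together with continuity of $v^{-1}$ (the standard convexity squeeze $0\le \hat F(p)-\hat F(p_0)+(p-p_0)\cdot v^{-1}(p_0)\le (p-p_0)\cdot(v^{-1}(p_0)-v^{-1}(p))$). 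One small point: your supremum defining $\hat F$ should be read locally, since the global sup of a concave function minus a linear one is typically $+\infty$; what you actually use is the formula $\hat F(p)=F(v^{-1}(p))-p\cdot v^{-1}(p)$. Your argument is also dimension-independent, yielding a continuous codimension-one submanifold of $\partial\mathcal{O}$ for any $n\geq 2$, whereas the paper's slicing proof is specific to $n=3$. The paper does note, in the remark immediately after its proof, that for \emph{plane} waves the Gauss map gives a one-line proof of no branching; your argument is precisely the spherical-wave analogue of that observation, which the paper did not carry out.
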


Even when branching does not occur, the GS assumption may fail when the set $\pi_{\ox}G_{\phi_i}$ has a \emph{cusp}; such examples are given in \S \ref{bad} for both spherical and planar incoming waves.  
These examples illustrate  different ways  the conclusion of Theorem \ref{u1xx} can fail if \eqref{u1yy} does not hold, that is, if the lowest order nonconstant homogeneous polynomial in the Taylor expansion of $F(\ox)$ at $\ox=0$ does not satisfy \eqref{u1yy}.

\subsection{Applications}
 The results of this paper extend the range of application of the main result of \cite{ww2023}.  That result, Theorem 2 of \cite{ww2023}, gives an explicit geometric optics description of solutions to the initial boundary value problem \eqref{e0aa}--\eqref{e0ac} whenever the RFM and GS assumptions are satisfied by the obstacle $\mathcal{O}$ and incoming phase $\phi_i$.     In \cite{ww2023} we verified these assumptions 
 for some families of convex obstacles $\mathcal{O}$, but only for \emph{planar} incoming waves associated to linear phases 
 $\phi_i(x,t)=-t+\theta\cdot x$, $|\theta|=1$.    The results of \S\S \ref{siw}, \ref{natureg}--\ref{3do} of this paper show that both assumptions hold for certain families of convex obstacles with grazing points of arbitrarily high order,
when the incoming phases are the nonlinear phases that describe spherical waves, $\phi_i(x,t)=-t+|x-b|$.\footnote{When $\usigma\in G^2_d\setminus G^3$, the results of \cite{cheverry1996} and \cite{ww2023} show that the RFM and GS assumptions are satisfied for any incoming characteristic phase; moreover, the grazing set is $C^\infty$, not just $C^1$.}

We now restate, somewhat informally, the main result of \cite{ww2023} for the problems considered here, referring the reader to that paper for a  more precise and general formulation.

\begin{theo}[{\cite[Theorem 2]{ww2023}}]
Consider the initial boundary value problem \eqref{e0aa}--\eqref{e0ac} under the RFM assumption, Assumptions \ref{rfa}.  Suppose the initial oscillatory profile $U_1(x,t,\theta_i)$ is supported near a characteristic of $\phi_i$ that grazes $\partial M$  (possibly to high order) at the point $(x_1,\ox,t)=(1,0,0)\in G_{\phi_i}$.  Assume that the GS assumption, Assumption \ref{gs}, is satisfied near $\usigma=(0,0,d\phi_i(0,0))\in \mathcal{G}_d$.  Then if $T>0$ is small enough, the solution $u^\eps\in H^1(M_T)$ satisfies 
 \begin{align}\label{d15}
u^\eps(x,t)|_{M_T}\sim_{H^1} u(x,t)+\eps U_r(x,t,\phi_r/\eps)+\eps U_i(x,t,\phi_i/\eps).
\end{align}
Here $U_k(x,t,\theta_k)$ for $k=r,i$  is periodic in $\theta_k$ with mean zero,
and the functions 
\begin{align}\label{d16}
u\in H^1(M_T),\; \partial_{\theta_r}U_r\in L^2(M_T\times \mathbb{T}),\;  \partial_{\theta_i}U_i\in  L^2(M_T\times \mathbb{T})
\end{align}
  are constructed to satisfy the profile equations \cite[(4.4)--(4.6)]{ww2023}.  

\end{theo}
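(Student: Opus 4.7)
The plan is to reduce the statement to a direct invocation of \cite[Theorem 2]{ww2023}, whose hypotheses are precisely the RFM assumption (Assumption \ref{rfa}) and the GS assumption (Assumption \ref{gs}) together with the standing hypotheses on $\mathcal{O}$, $\phi_i$, and the incoming profile $U_1$. All the verifications of RFM and GS that occupy Sections \ref{siw}--\ref{3do} feed into this single application, so the work here is to explain how the quoted result is assembled rather than to redo the construction.

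First, using the RFM assumption, I would introduce the reflected phase $\phi_r$ on the range $\mathcal{J}_r$ of the reflected flow map via the formulas \eqref{d10}. The regularity statement after \eqref{d10} gives $\phi_r \in C^\infty(\mathcal{J}_r')$ and $\phi_r\in C^1(\mathcal{J}_r)$, with a controlled singularity along the reflected image of $G_{\phi_i}$. Next, with $\phi_i$ and $\phi_r$ in hand, I would write down the three-phase WKB ansatz
\begin{equation*}
u^\eps_{\mathrm{app}}(x,t) = u(x,t) + \eps U_i(x,t,\phi_i/\eps) + \eps U_r(x,t,\phi_r/\eps),
\end{equation*}
substitute into \eqref{e0aa}--\eqref{e0ac}, and collect powers of $\eps$ to derive the profile equations \cite[(4.4)--(4.6)]{ww2023}. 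These are transport-type equations along the incoming and reflected characteristic flows, coupled by the boundary condition \eqref{e0ab}, which forces the identity $\phi_i = \phi_r$ and a compatibility relation between $U_i$ and $U_r$ on $\partial M\times[-T,T]$. The GS assumption is what ensures that the locus where matching takes place is well-behaved enough (a codimension two $C^1$ submanifold, with the diffractive-type behavior near $\usigma$) to solve these profile equations in the function spaces listed in \eqref{d16}.

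The main technical step, which is done in \cite{ww2023} and I would invoke, is the proof that the ansatz $u^\eps_{\mathrm{app}}$ genuinely approximates the exact solution $u^\eps$ in $H^1(M_T)$ as in \eqref{d15}. This has two sub-parts: (i) an existence/regularity argument for the profiles, using the RFM diffeomorphism to pull back the transport equations to coordinates on $(G_{\phi_i}\cup I_-)\times [0,s_0]$ where they become ODEs in $s$, and using the GS codimension-two structure to patch the profiles across the grazing set; (ii) an error estimate showing that $u^\eps - u^\eps_{\mathrm{app}}$ is $o_{H^1}(\eps)$, obtained by a standard energy estimate on the Lipschitz semilinear hyperbolic problem satisfied by the residual, combined with Lipschitz estimates for $f(x,t,u,\nabla_{x,t}u)$ and the fact that $U_i,U_r$ have mean zero in $\theta$.

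The hard part is not anything I would redo here; it is the original construction in \cite{ww2023}, and specifically the careful handling near the grazing set where $\phi_r$ loses $C^2$ regularity. What the present paper contributes, and what my proof of this restated theorem really rests on, is the verification that the geometric hypotheses RFM and GS are satisfied in the concrete settings of convex incoming waves against the obstacles of Assumption \ref{n1z}; once those hypotheses are established (as in Theorems \ref{u1xx} and the RFM theorems stated above), the conclusion \eqref{d15}--\eqref{d16} follows by citing \cite[Theorem 2]{ww2023} verbatim.
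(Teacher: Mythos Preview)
Your proposal is correct and matches the paper's own treatment: the theorem is simply a restatement of \cite[Theorem~2]{ww2023}, and the paper offers no independent proof beyond citing that reference and remarking on how the present paper's verifications of the RFM and GS assumptions feed into it. Your sketch of the structure of the cited argument (construction of $\phi_r$, WKB ansatz, profile equations, error estimate) is accurate and appropriate context, but, as you correctly identify, the actual proof lives entirely in \cite{ww2023}.
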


\Remarks
1. The meaning of ``$\sim_{H^1}$" in \eqref{d15} is explained in \cite[Definition 1.3]{ww2023}. Roughly, it means that the leading part of $u^\eps$ has the given form up to an error that is small in $H^1$ when  the wavelength $\eps$ is small.

\noindent
2. The profile equations referred to in the theorem are a coupled system for the functions $u$, $U_r$, $U_i$ in \eqref{d16} in which the equations for $U_r$, $U_i$ describe transport along the characteristic vector fields associated to $\phi_r$ and $\phi_i$:
$$T_{\phi_k}=2\nabla_x\phi_k\partial_x-2\partial_t\phi_k\partial_t,\;\;k=r,i.$$

\noindent
3. We have seen that the RFM assumption allows us to construct the reflected phase $\phi_r$ as a $C^1$ function on $\mathcal{J}_r$, the range of $Z_r$ as in \eqref{grill4}.  The GS assumption implies that the union of the forward and backward flowouts of the grazing set $G_{\phi_i}$ under $T_{\phi_i}$ is a $C^1$ hypersurface in $\mathbb{R}^{n+1}_{x,t}$.   We write this union as $SB=SB_+\cup SB_-$ and refer to $SB_+$, the forward flowout, as the ``shadow boundary".   From the profile equations \cite[(4.4)--(4.6)]{ww2023} we can read off the property that $U_r(x,t,\theta_r)$ and $U_i(x,t,\theta_i)$ have their supports inside the set of points of $M_T$ that can be reached by starting from points $(x,t)$ in the support of the initial profile $U_1(x,t,\theta_i)$ as in  \eqref{e0ac}, then flowing forward along $T_{\phi_i}$, and continuing along  $T_{\phi_r}$ in any case where a characteristic of $T_{\phi_i}$ hits the boundary. 
With \eqref{d15} this shows that the exact solution $u^\eps$ has no high frequency oscillations in the shadow region adjacent to $SB_+$ that are detectable in the $H^1$ norm.


\section{The reflected flow map for spherical incoming waves}\label{siw}

As in \S \ref{dar}, we assume $\mathcal O$ is a convex obstacle given by 
\[ \mathcal O:=\{ (x_1, \ox) \mid x_1< F(\ox),\; \ox\in\RR^{n-1} \} \]
where $F: \RR^{n-1}\to \RR$ is a $C^\infty$ strictly concave function.
 \footnote{The normalization that $F$ has a max at $\ox=0$ with $F(0)=1$ is actually not used in \S \ref{siw}, but the reader is free to assume that.}  
We consider the following phase function for spherical incoming waves
\[ \phi_i(x,t) = -t+|x-b|, \ b\in \RR^n\setminus \overline{\mathcal{O}}. \]
We define 
\[ \rho(\ox):=|(F(\ox),\ox)-b| \]
and 
\begin{equation}\label{def:alpha}
\alpha(\ox):=(\alpha_1(\ox), \overline{\alpha}(\ox))\in \mathbb S^{n-1}, \ \alpha_1(\ox):=\frac{F(\ox)-b_1}{\rho(\ox)}, \ \overline{\alpha}(\ox):=\frac{\ox-\overline{b}}{\rho(\ox)}. 
\end{equation}
According to Definition \ref{gsill}, the grazing set and the illuminable regions are given by 
\begin{align}\label{grill}
    \begin{split}
    G_{\phi_i}= & \{ (F(\ox),\ox,t) \mid \alpha_1(\ox)=\langle \nabla F(\ox), \oalpha(\ox) \rangle \},\\
    I_-= & \{ (F(\ox),\ox,t) \mid \alpha_1(\ox)<\langle \nabla F(\ox), \oalpha(\ox) \rangle \}.
\end{split}
\end{align}
Fix any $R>0$ and any $T>0$.  In \eqref{grill} and throughout \S \ref{siw} we allow $\ox\in \overline{B(0,R)}\subset \RR^{n-1}$ and $t\in [-T,T]$.

For $t^0\in \RR$, $x^0\in \RR^n\setminus (\mathcal O\cup\{b\})$,
we have 
\[ \nabla_{x,t}\phi_i(x^0,t^0) = \left( \frac{x^0-b}{|x^0-b|},-1 \right). \]
The incoming bicharacteristic passing $(x^0,t^0,\frac{x^0-b}{|x^0-b|}, -1)$ is
\[ \gamma_i(s)=\left(x_1^0+2s\frac{x_1^0-b_1}{|x^0-b|}, \ox^0+2s\frac{\ox^0-\ob}{|x^0-b|}, t^0-2s\tau, \frac{x_1^0-b_1}{|x^0-b|}, \frac{\ox^0-\ob}{|x^0-b|},-1  \right). \]
Suppose $\gamma_i(s)$ intersects $\partial T^*((\RR^n\setminus \mathcal{O}) \times \RR)$ at $(F(\ox),\ox, t', \xi^i_1, \oxi^i, -1)$, then $\xi^i(\ox) = \frac{x^0-b}{|x^0-b|}=\alpha(\ox)$. Let $\xi^r(\ox)$ be the reflected direction, then 
\[ (\xi^i, -1)-(\xi^r,\tau^r) = c(1,-\nabla F,0), \ |\xi^r|=|\tau^r|. \]
From here we solve 
\begin{equation}\begin{split} \label{b2}
\xi_1^r(\ox) = & \alpha_1(\ox)+\frac{2}{1+|\nabla F(\ox)|^2}\left( 
\langle \nabla F(\ox),\oalpha(\ox) \rangle-\alpha_1(\ox) \right), \\
\oxi^r(\ox) = & \oalpha(\ox)-\frac{2\nabla F(\ox)}{1+|\nabla F(\ox)|^2}\left( \langle \nabla F(\ox),\oalpha(\ox) \rangle-\alpha_1(\ox) \right).
\end{split}\end{equation}
The reflected flow map is then given by 
\[ Z_r(s,\ox,t)=(F(\ox)+2s\xi_1^r(\ox), \ox+2s\oxi^r(\ox), t+2s ). \]

\begin{prop}\label{b1}
    Fix any $s_0>0$ and take $\ox\in \overline{B(0,R)}$, $t\in [-T,T]$ for any positive $R$ and $T$.  The reflected flow map $Z_r$ is a diffeomorphism from $[0,s_0]\times I_-$ onto its range and extends to a homeomorphism from $[0,s_0]\times (G_{\phi_i}\cup I_-)$ onto its range.    
\end{prop}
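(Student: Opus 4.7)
The plan is to reduce to the spatial component and then verify three things in turn: a local diffeomorphism property via a Jacobian computation, global injectivity, and a compactness-based extension. Since
$$Z_r(s,\ox,t) = (\tilde Z_r(s,\ox),\,t+2s), \qquad \tilde Z_r(s,\ox) := (F(\ox)+2s\xi_1^r(\ox),\,\ox+2s\oxi^r(\ox)),$$
any target $(y_1,\oy,t')$ determines $t = t'-2s$ once $(s,\ox)$ is recovered. So it suffices to show that $\tilde Z_r$ is a diffeomorphism onto its image from $[0,s_0]\times \pi_{\ox}I_-$ that extends to a homeomorphism on $[0,s_0]\times\pi_{\ox}(G_{\phi_i}\cup I_-)$.

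For the local diffeomorphism, write $X(\ox):=(F(\ox),\ox)$. The $n\times n$ Jacobian of $\tilde Z_r$ has columns $2\xi^r(\ox)$ and $\partial_{\ox_j}X(\ox) + 2s\,\partial_{\ox_j}\xi^r(\ox)$ for $j=2,\dots,n$. At $s=0$, cofactor expansion along the first column, using the block form of $DX$, gives $\det = 2(\xi_1^r - \langle\oxi^r,\nabla F\rangle)$; inserting \eqref{b2} and simplifying yields $2(\langle\nabla F,\oalpha\rangle - \alpha_1)$, which is strictly positive on $I_-$ by definition. For $s>0$, I would expand the determinant as a polynomial in $s$ of degree at most $n-1$ and show it remains nonzero on $[0,s_0]\times\pi_{\ox}I_-$. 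The key input is that differentiating \eqref{b2} expresses $\partial_{\ox}\xi^r$ in terms of $\nabla F$ and $\nabla^2 F$, and strict concavity of $F$ prevents the perturbation $2s\,\partial_{\ox}\xi^r$ from collapsing the column span onto $\RR\xi^r$ as $s$ varies in $[0,s_0]$.

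Global injectivity of $\tilde Z_r$ is the main obstacle. If $\tilde Z_r(s_1,\ox_1) = \tilde Z_r(s_2,\ox_2) = y$, then each $\ox_i$ is a critical point of the Fermat path-length functional
$$L_y(\ox) := |X(\ox)-b| + |X(\ox)-y|,$$
with critical value $\rho(\ox_i)+2s_i$. The approach is to show $L_y$ admits a unique critical point in $\pi_{\ox}I_-$ by computing $\nabla^2 L_y$ at any critical point $\ox^*$ and bounding it below by a positive definite quantity: the Hessian decomposes into a ``second fundamental form'' contribution involving $-\nabla^2 F$ (positive by strict concavity) weighted by positive geometric factors depending on $b$, $y$, and $X(\ox^*)$, plus tangential terms controlled by the vanishing of $\nabla L_y$ at $\ox^*$. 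Non-degeneracy of every critical point, combined with a continuation argument on the connected set $\pi_{\ox}I_-$, then forces uniqueness. A more geometric alternative would be to argue that two broken paths $b\to X(\ox_i)\to y$ both satisfying the reflection law at distinct $\ox_1,\ox_2$ contradict the strict convexity of $\partial\mathcal O$ seen from the exterior.

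For the extension to a homeomorphism, the formulas for $\xi^r$ and $\tilde Z_r$ are smooth up to and including $G_{\phi_i}$, so continuous extension is automatic; on $G_{\phi_i}$ the Jacobian degenerates because $\xi^r=\xi^i$ is tangent to $\partial\mathcal O$, but the Fermat argument above still yields injectivity by passing to limits, the critical-point uniqueness for $L_y$ being stable under limits. Finally, since the domain $[0,s_0]\times\overline{B(0,R)}\times[-T,T]$ is compact and the codomain $\RR^{n+1}$ is Hausdorff, a continuous injection is automatically closed and hence a homeomorphism onto its image. The hard step, as noted, is the uniqueness of critical points for $L_y$: this is where the strict concavity of $F$ must be combined carefully with the spherical structure of $\phi_i$ to rule out multiple critical points for every $y$ in the range of $\tilde Z_r$.
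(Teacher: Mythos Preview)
Your overall architecture (reduce to the spatial map $\tilde Z_r$, check the Jacobian, prove global injectivity, then invoke compactness) matches the paper's. The compactness step is fine. The two substantive steps, however, each have a genuine gap.

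\textbf{Jacobian.} You correctly compute $\det D\tilde Z_r|_{s=0}=2(\langle\nabla F,\oalpha\rangle-\alpha_1)>0$ on $I_-$. But for $s>0$ you only offer ``expand as a polynomial in $s$ and appeal to strict concavity.'' Since $s_0>0$ is \emph{arbitrary}, a degree-$(n-1)$ polynomial in $s$ with coefficients you have not controlled could well vanish for some positive $s$; nothing you wrote rules this out. The paper does not argue this way. It writes the Jacobian as $2\xi_1^r\det A$ with
\[
A=B+2sC\,\frac{\partial\oxi^r}{\partial\ox},\qquad B=I-\frac{\oxi^r\otimes\nabla F}{\xi_1^r},\quad C=I+\frac{\oxi^r\otimes\oxi^r}{(\xi_1^r)^2},
\]
computes $\frac{\partial\oxi^r}{\partial\ox}=K+L$ explicitly from \eqref{b2}, and then shows that $B^TK$ and $B^TL$ are \emph{symmetric positive semi-definite} (this is where the spherical structure of $\alpha$ and the concavity of $F$ enter). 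Factoring $A=B\bigl(I+2s\,B^{-1}C(B^{-1})^T(B^TK+B^TL)\bigr)$ and observing that $B^{-1}C(B^{-1})^T$ is symmetric positive definite, one gets that the bracket has only nonnegative eigenvalues for every $s\ge 0$, whence
\[
j(s,\ox,t)\ \ge\ 2(\langle\nabla F,\oalpha\rangle-\alpha_1)>0\qquad\text{for all }s\in[0,s_0].
\]
This uniform-in-$s$ lower bound is the missing idea; without it your Jacobian argument does not close.

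\textbf{Injectivity.} Your Fermat functional $L_y(\ox)=|X(\ox)-b|+|X(\ox)-y|$ is a legitimate reformulation---indeed any $\ox$ with $\tilde Z_r(s,\ox)=y$ is a critical point of $L_y$---but the uniqueness argument is only sketched. Showing $\nabla^2L_y>0$ at every critical point does not by itself give uniqueness (you would also need coercivity and a Morse-type argument), and in fact $\nabla^2L_y$ may be only semi-definite since strict concavity of $F$ does not force $\nabla^2F<0$ pointwise. The paper avoids this entirely by proving the single monotonicity inequality
\[
\bigl\langle \xi^r(\ox^*)-\xi^r(\ox),\,(F(\ox^*)-F(\ox),\,\ox^*-\ox)\bigr\rangle>0\qquad(\ox\neq\ox^*),
\]
via a direct computation using \eqref{b2}, strict concavity of $F$, and the identity $\langle\alpha(\ox^*)-\alpha(\ox),X(\ox^*)-X(\ox)\rangle=(\rho(\ox^*)+\rho(\ox))(1-\langle\alpha(\ox^*),\alpha(\ox)\rangle)\ge 0$. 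If $\tilde Z_r(s,\ox)=\tilde Z_r(s^*,\ox^*)$ then the same inner product equals $-2(s+s^*)(1-\langle\xi^r(\ox),\xi^r(\ox^*)\rangle)\le 0$, a contradiction unless $\ox=\ox^*$. This works uniformly on $G_{\phi_i}\cup I_-$ and for every $s,s^*\ge 0$, and is both shorter and more robust than the variational route you propose.
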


\begin{proof}
It suffices to show the following two claims
\begin{enumerate}
    \item[1.] $Z_r$ is injective on $[0,s_0]\times (G_{\phi_i}\cup I_-)$;
    \item[2.] The Jacobian $j$ of $Z_r$ is non-zero on $[0,s_0]\times I_-$.
\end{enumerate}

\noindent
{\bf 1. Injectivity.}
We claim that the following holds when $\ox^*\neq \ox$
\[ \langle \xi^r(\ox^*)-\xi^r(\ox), (F(\ox^*)-F(\ox), \ox^*-\ox) \rangle > 0. \]
Indeed, using \eqref{b1} we have 
\begin{equation}\label{x1}
\oxi^r(\ox) = \oalpha(\ox) - \nabla F(\ox)( \xi_1^r(\ox) - \alpha_1(\ox) ) 
\end{equation}
and the same equality holds when $\ox$ is replaced by $\ox^*$.
Equation \eqref{x1} implies that
\[\begin{split}
    &\langle \oxi^r(\ox^*)-\oxi^r(\ox), \ox^*-\ox \rangle\\
    = & \langle \oalpha(\ox^*)-\oalpha(\ox), \ox^*-\ox \rangle\\
    & + \langle \nabla F(\ox^*), \ox-\ox^* \rangle (\xi_1^r(\ox^*)-\alpha_1(\ox^*)) 
     + \langle \nabla F(\ox), \ox^*-\ox \rangle(\xi_1^r(\ox)-\alpha_1(\ox))\\
    \geq & \langle \oalpha(\ox^*)-\oalpha(\ox), \ox^*-\ox \rangle \\
    & + (F(\ox)-F(\ox^*))(\xi_1^r(\ox^*) - \alpha_1(\ox^*)) + (F(\ox^*)-F(\ox))(\xi_1^r(\ox)-\alpha_1(\ox))\\
    = & -(F(\ox^*)-F(\ox))(\xi_1^r(\ox^*)-\xi_1^r(\ox))\\
    & + (F(\ox^*)-F(\ox))(\alpha_1(\ox^*)-\alpha_1(\ox))+\langle \oalpha(\ox^*)-\oalpha(\ox), \ox^*-\ox \rangle.
\end{split}\]
Since $F$ is strictly concave, we know the equality holds if and only if 
\begin{equation}\label{eq1}
\xi_1^r(\ox^*)=\alpha_1(\ox^*) \text{ and } \xi_1^r(\ox)=\alpha_1(\ox).
\end{equation}
Next we show that 
\[ (F(\ox^*)-F(\ox))(\alpha_1(\ox^*)-\alpha_1(\ox))+\langle \oalpha(\ox^*)-\oalpha(\ox), \ox^*-\ox \rangle \geq 0, \]
that is,
\[ \langle \alpha(\ox^*)-\alpha(\ox), (F(\ox^*),\ox^*)-(F(\ox),\ox) \rangle \geq 0. \]
Using definitions of $\alpha$ and $\rho$, we need to show 
\[ \langle \alpha(\ox^*)-\alpha(\ox), \rho(\ox^*)\alpha(\ox^*)-\rho(\ox)\alpha(\ox)  \rangle \geq 0. \]
A direct computation shows that the left-hand side is
\[ (\rho(\ox^*)+\rho(\ox))(1-\langle \alpha(\ox^*), \alpha(\ox) \rangle) \]
which is non-negative and equals zero if and only if \begin{equation}\label{eq2}
\alpha(\ox^*)=\alpha(\ox).
\end{equation}
Thus we have shown that 
\[ \langle \xi^r(\ox^*)-\xi^r(\ox), ( F(\ox^*)-F(\ox), \ox^*-\ox ) \rangle\geq 0 \]
and equality holds if and only if both \eqref{eq1} and \eqref{eq2} hold. It remains to show that the equality can not hold. Otherwise, combining \eqref{eq1} and \eqref{eq2}, we know
\[ \alpha(\ox^*)=\alpha(\ox) \text{ and } \xi^r(\ox^*)=\xi^r(\ox). \]
But this implies that 
\[ \nabla F(\ox^*) = \nabla F(\ox) \]
which is impossible when $\ox^*\neq \ox$.


The map $Z_r$ is continuous and injective on the compact domain $[0,s_0]\times(G_{\phi_i}\cup I_-)$, and hence is a homeomorphism on this domain.

\noindent
{\bf 2. Diffeomorphism.}
As in the plane wave case \cite[Proposition 8.10]{ww2023}, we have 
\[ j(s,\ox,t) = 2\xi_1^r\det(A), \ A:= I-\frac{\oxi^r \otimes \nabla F}{\xi_1^r} +2s \left( \frac{\partial\oxi^r}{\partial \ox} -\frac{\oxi^r \otimes \nabla \xi_1^r}{\xi_1^r}  \right). \]

We first differentiate the identity $(\xi_1^r)^2+|\oxi^r|^2=1$ with respect to $\ox$
to obtain 
\[ \xi_1^r\nabla \xi_1^r +\oxi^r\frac{\partial \oxi^r}{\partial\ox}=0 \ \Rightarrow\ \nabla \xi_1^r = -\frac{\oxi^r}{\xi_1^r}\frac{\partial\oxi^r}{\partial\ox}. \]
Thus 
\[ \frac{\partial\oxi^r}{\partial\ox} -\frac{\oxi^r\otimes \nabla \xi^r_1}{\xi_1^r} = \left(I+\frac{\oxi^r\otimes \oxi^r}{(\xi_1^r)^2}\right)\frac{\partial \oxi^r}{\partial \ox}. \]
In the following we denote 
\[ B:=I-\frac{ \oxi^r\otimes \nabla F }{\xi_1^r}, \ C:=I+\frac{\oxi^r\otimes \oxi^r}{(\xi_1^r)^2}. \]
Then 
\[ A=B+2sC\frac{\partial \oxi^r}{\partial \ox}. \]

\noindent
{\em 2.1 Computation of $\frac{\partial \oxi^r}{\partial\ox}$.}
Differentiate \eqref{x1} with respect to $\ox$ and we find 
\begin{equation}\label{dxr1}
\frac{\partial \oxi^r}{\partial\ox} = \frac{\partial \oalpha}{\partial \ox}+(\alpha_1-\xi_1^r)\nabla^2 F +\nabla F\otimes \nabla (\alpha_1-\xi_1^r). 
\end{equation}
Differentiating the first equation in \eqref{b2} with respect to $\ox$ gives 
\begin{equation}\label{dxr2}
\nabla (\alpha_1-\xi_1^r) = -\frac{2\oxi^r}{1+|\nabla F|^2}\nabla^2 F - \frac{2}{1+|\nabla F|^2}\left( \nabla F+\frac{\oalpha}{\alpha_1} \right)\frac{\partial\oalpha}{\partial\ox}. 
\end{equation}
In the computation we used $|\alpha|=1$, which implies that
\[ \nabla \alpha_1 = -\frac{\oalpha}{\alpha_1}\frac{\partial \oalpha}{\partial \ox}. \]
Putting \eqref{dxr1} and \eqref{dxr2} together gives 
\[ \frac{\partial\oxi^r}{\partial\ox}=\left( 
I-\frac{ 2\nabla F\otimes (\alpha_1\nabla F+\oalpha) }{\alpha_1(1+|\nabla F|^2)} \right)\frac{\partial\oalpha}{\partial \ox} + \left( (\alpha_1-\xi_1^r)I-\frac{ 2\nabla F\otimes \oxi^r }{ 1+|\nabla F|^2 } \right) \nabla^2 F. \]
In the following we denote
\[ K:=\left( 
I-\frac{ 2\nabla F\otimes (\alpha_1\nabla F+\oalpha) }{\alpha_1(1+|\nabla F|^2)} \right)\frac{\partial\oalpha}{\partial \ox}, \ L:= \left( (\alpha_1-\xi_1^r)I-\frac{ 2\nabla F\otimes \oxi^r }{ 1+|\nabla F|^2 } \right) \nabla^2 F.\]
A direct computation using the definition \eqref{def:alpha} of $\oalpha$ gives
\[ \frac{\partial\oalpha}{\partial \ox} = \frac{I-\oalpha\otimes (\alpha_1\nabla F+\oalpha)}{\rho}. \]
Inserting this in the expression of $K$ gives
\[\begin{split}
K = &  \tfrac{1}{\rho}\left( 
I-\tfrac{ 2\nabla F\otimes (\alpha_1\nabla F+\oalpha) }{\alpha_1(1+|\nabla F|^2)} \right)\left( I-\oalpha\otimes (\alpha_1\nabla F+\oalpha) \right)\\
= & \tfrac{1}{\rho}\left( I - \tfrac{ 2\nabla F\otimes (\alpha_1\nabla F+\oalpha) }{\alpha_1(1+|\nabla F|^2)} - \oalpha\otimes (\alpha_1\nabla F+\oalpha) + \tfrac{ 2\langle 
\alpha_1\nabla F+\oalpha, \oalpha \rangle \nabla F\otimes (\alpha_1\nabla F+\oalpha) }{\alpha_1(1+|\nabla F|^2)} \right)\\
= & \tfrac{1}{\rho}\left( I - \tfrac{ 2\nabla F\otimes (\alpha_1\nabla F+\oalpha) }{\alpha_1(1+|\nabla F|^2)} - \oalpha\otimes (\alpha_1\nabla F+\oalpha) + \tfrac{ 2(1+\alpha_1(\langle \nabla F,\oalpha \rangle-\alpha_1)) \nabla F\otimes (\alpha_1\nabla F+\oalpha) }{\alpha_1(1+|\nabla F|^2)} \right)\\
= & \tfrac{1}{\rho}\left( I  - \oalpha\otimes (\alpha_1\nabla F+\oalpha) + \tfrac{ 2(\langle \nabla F,\oalpha \rangle-\alpha_1) \nabla F\otimes (\alpha_1\nabla F+\oalpha) }{(1+|\nabla F|^2)} \right)\\
= & \tfrac{ I-\oxi^r\otimes (\alpha_1\nabla F+\oalpha) }{\rho}.
\end{split}\]

Now for $L$, we write 
\[ L=(\alpha_1-\xi_1^r)\left( I - \frac{2\nabla F\otimes \oxi^r}{(\alpha_1-\xi_1^r)(1+|\nabla F|^2)} \right)\nabla^2 F. \]
Use the first equation in \eqref{b2} and we have
\[ L=\left( I+\frac{ \nabla F\otimes \oxi^r }{ \langle \nabla F,\oalpha \rangle - \alpha_1 } \right) \frac{2(\langle \nabla F,\oalpha \rangle - \alpha_1)}{1+|\nabla F|^2}\times (-\nabla^2F). \]

We need the following lemma to proceed.
\begin{lemm}\label{lem:det}
Let $a,b\in \RR^{n-1}$ be row vectors, then
\[ \det(I+a\otimes b) = 1+\langle a,b \rangle. \]
Moreover,
    \[ (I+a\otimes b)^{-1} = I-\frac{a\otimes b}{1+\langle 
a,b \rangle} \]
as long as $\langle a,b \rangle\neq -1$.
\end{lemm}
\begin{proof}
For the first part, see \cite[(8.50)]{ww2023}. For the second identity, we compute directly
\[\begin{split} 
(I+a\otimes b)\left( I-\frac{a\otimes b}{1+\langle a,b \rangle} \right) 
= I+\left( 1-\frac{1}{1+\langle a,b \rangle} \right)a\otimes b -\frac{\langle a,b \rangle a\otimes b}{1+\langle a,b \rangle} =I.
\end{split}\]
This completes the proof.
\end{proof}

Apply Lemma \ref{lem:det} to $B$ and we find that
\[ \det(B) = \frac{\xi_1^r-\langle \oxi^r,\nabla F \rangle}{\xi_1^r} = \frac{ \langle \nabla F, \oalpha \rangle - \alpha_1 }{ \xi_1^r }.\]
Moreover,
\[ B^T = I-\frac{\nabla F\otimes \oxi^r}{\xi_1^r}, \ (B^T)^{-1} = I + \frac{ \nabla F\otimes \oxi^r }{ \xi_1-\langle \nabla F, \oxi^r \rangle } = I + \frac{ \nabla F\otimes \oxi^r }{ \langle 
\nabla F, \oalpha \rangle - \alpha_1 }. \]

As a result, we have\footnote{For the last equality in \eqref{qz1} we used $\xi_1\nabla F+\oxi^r=\alpha_1\nabla F+\oalpha^r$. }
\begin{align}\label{qz1}
\begin{split} 
B^T K = & \tfrac{1}{\rho}\left( I-\tfrac{\nabla F\otimes \oxi^r}{\xi_1^r} \right)\left( I-\oxi^r\otimes (\alpha_1\nabla F+\oalpha) \right) \\
= & \tfrac{1}{\rho}\left( I -\tfrac{\nabla F\otimes \oxi^r}{\xi_1^r}- \oxi^r\otimes (\alpha_1\nabla F+\oalpha) +\tfrac{|\oxi^r|^2 \nabla F\otimes (\alpha_1\nabla F+\oalpha) }{\xi_1^r} \right)\\
= & \tfrac{1}{\rho}\left( I -\tfrac{\nabla F\otimes \oxi^r}{\xi_1^r}- \oxi^r\otimes (\alpha_1\nabla F+\oalpha) +\tfrac{(1-(\xi_1^r)^2) \nabla F\otimes (\alpha_1\nabla F+\oalpha) }{\xi_1^r} \right)\\
= & \tfrac{1}{\rho}\left( I+\tfrac{ \nabla F\otimes (\alpha_1\nabla F+\oalpha-\oxi^r) }{\xi_1^r} - (\xi_1\nabla F+\oxi^r)\otimes (\alpha_1\nabla F+\oalpha) \right)\\
= & \tfrac{ I+\nabla F \otimes \nabla F - (\alpha_1\nabla F+\oalpha) \otimes (\alpha_1\nabla F+\oalpha) }{\rho}.
\end{split}
\end{align}
From here we can see that $B^TK$ is symmetric. We claim that $B^TK$ is also positive definite. Indeed, for any $v\in \RR^{n-1}$, there holds
\[\begin{split} 
\rho\langle B^TKv,v \rangle = & |v|^2+|\langle \nabla F, v \rangle|^2 -|\langle \alpha_1\nabla F+\oalpha, v \rangle|^2 \\
= & |v|^2+|\langle \nabla F, v \rangle|^2 -  | \langle (\alpha_1,\oalpha), ( \langle \nabla F,v \rangle, v ) \rangle |^2 \geq 0.
\end{split}\]
Equality holds if and only if for some $c\neq 0$
\[ \alpha_1 = c\langle \nabla F,v \rangle \text{ and } \oalpha = cv,   \]
which implies
\[ \alpha_1=\langle \nabla F, \oalpha \rangle. \]
But the latter can not happen in $I_-$ according to \eqref{grill}.

The formul{\ae} for $(B^{-1})^T$ and $L$ tell us
\[ B^T L = -\frac{2( \langle \nabla F,\oalpha \rangle - \alpha_1 )}{ 1+|\nabla F|^2 } \nabla^2 F. \]
Notice that $\langle \nabla F,\oalpha \rangle-\alpha_1>0$ on $I_-$ and $-\nabla^2 F$ is positive semi-definite, thus $B^TL$ is symmetric and positive semi-definite. 

Finally, we conclude that 
\[\begin{split}
    j(s,\ox,t) = & 2\xi_1^r\det(A)=2\xi_1^r\det(B+2sC(K+L))\\
    = & 2\xi_1^r\det(B)\det( I+2s B^{-1}C(B^{-1})^T(B^TK+B^TL) ) \\
    = & 2(\langle \nabla F,\oalpha \rangle -\alpha_1 ) \det( I+2s B^{-1}C(B^{-1})^T(B^TK+B^TL)).
\end{split}\]
Notice now that both $B^{-1}C(B^{-1})^T$ and $B^TK+B^TL$ are symmetric and positive definite. Let $Q$ be an invertible matrix such that $QQ^T = B^{-1}C(B^{-1})^T$, then 
\[ B^{-1}C(B^{-1})^T(B^TK+B^TL) = Q \left(Q^T(B^TK+B^TL)Q\right) Q^{-1}.  \]
This shows that the product of $B^{-1}C(B^{-1})^T$ and $B^TK+B^TL$ has positive eigenvalues.
Thus we conclude that 
\[ j(s,\ox,t)\geq 2(\langle \nabla F, \oalpha \rangle -\alpha_1)>0. \]
This completes the proof.
\end{proof}

\section{The grazing set (GS) assumption}\label{gsa}

Most of this section is concerned with providing classes of examples where the grazing set assumption is satisfied for incoming spherical waves.  It becomes more difficult to provide examples as the dimension of the obstacle increases.   Near the end of the section we provide examples for both spherical and planar waves where the grazing set is continuous but not differentiable, so the GS assumption fails to hold.

In view of Remark \ref{d14z}  we are mainly interested in grazing of order $\geq 4$, the cases where it is challenging to understand the nature (possible branching, regularity) of the grazing set because the implicit function theorem cannot be used directly. 

\subsection{Spherical waves grazing 2D obstacles}\label{natureg}

We show first that the GS assumption is always satisfied for incoming spherical waves in 2D.  For plane waves this result is proved in Proposition 8.2 of \cite{ww2023}.

\begin{prop}\label{c1z}
Let $\mathcal{O}$ and $F$ satisfy Assumption \ref{n1z} when $n=2$ and suppose $\phi_i(x,t)=-t+|x-b|$, $b=(1,b_2)$ $(b_2\neq 0)$ satisfies Assumption 
\ref{grill5}.  Then the grazing set assumption, Assumption \ref{gs}, holds with $\zeta=x_2$.

\end{prop}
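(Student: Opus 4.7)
The plan is to reduce Assumption \ref{gs} to a one-variable statement about the function
\[ g(x_2) := F(x_2) - 1 - F'(x_2)(x_2 - b_2). \]
By \eqref{gr5}, the spatial projection $\pi_{\ox} G_{\phi_i}$ is precisely the zero set of $g$ (in 2D we have $\ox = x_2$ and $\ob = b_2$). Assumption \ref{grill5} together with $F(0) = 1$ and $F'(0) = 0$ gives $g(0) = 0$. To verify part (a) of Assumption \ref{gs} with the choice $\zeta(x,t) = x_2$, it suffices to show that $x_2 = 0$ is the \emph{only} zero of $g$ in some neighborhood of $0$; then $G_{\phi_i}$ near $(1, 0, t)$ coincides with the common zero set of $\zeta$ and $\beta(x,t) = x_1 - F(x_2)$.

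The main step is an integration-by-parts identity. A direct differentiation gives $g'(x_2) = (b_2 - x_2)\, F''(x_2)$, so since $g(0) = 0$ we obtain
\[ g(x_2) = \int_0^{x_2} (b_2 - s)\, F''(s) \, ds. \]
For $|x_2| < |b_2|$ the factor $b_2 - s$ has constant sign $\sgn(b_2)$ on the interval of integration. Concavity of $F$ gives $F'' \le 0$, so the integrand has a definite sign on the interval from $0$ to $x_2$. Hence $g(x_2) = 0$ would force $F''$ to vanish identically (by continuity) on that interval; for $x_2 \neq 0$ this would make $F$ linear on a nondegenerate subinterval, contradicting strict concavity of $F$.

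The remaining conditions are straightforward. Clearly $d\zeta = dx_2 \neq 0$, and $d\zeta \wedge d\beta \neq 0$ at $(1, 0, t)$. For condition (b), with $H_p = 2\xi_1 \partial_{x_1} + 2\oxi \partial_{\ox} - 2\tau \partial_t$ we get $H_p \zeta(\urho) = 2\oxi^i(0) = -2 b_2/|b_2| \neq 0$, using $\oxi^i(0) = \partial_{x_2}|x-b|\big|_{(1,0)} = -b_2/|b_2|$. Condition (c) follows at once from Remark \ref{d12x}: for a strictly convex obstacle, any point of $G$ lying over $\textrm{Graph}\, d\phi_0$ automatically lies in $\mathcal{G}_d$.

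The only real obstacle is that the order of tangency at $x_2 = 0$ may be arbitrarily high: $F''(0) = 0$ is allowed (compare Remark \ref{d14z}), so the implicit function theorem cannot be applied to $g$ at the origin. The integration-by-parts argument bypasses this entirely, since it uses only the sign of $F''$ on an interval and not its pointwise nonvanishing. As a sanity check, for $F(x_2) = 1 - x_2^2$ one computes $g(x_2) = x_2(x_2 - 2 b_2)$, which has a second zero at $x_2 = 2 b_2$; this confirms that the conclusion is intrinsically local and that the size of the neighborhood must depend on $|b_2|$.
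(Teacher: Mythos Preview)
Your argument is correct and is cleaner than the paper's. The paper writes the Taylor expansion $F(x_2)=1+c_{2k}x_2^{2k}+r(x_2)$ (or $F(x_2)=1+r(x_2)$ with $r=O(|x_2|^\infty)$ in the infinite-order case), substitutes into $g$, and treats the two cases separately; in the finite-order case it factors out $x_2^{2k-1}$ and checks the cofactor is nonvanishing, while in the infinite-order case it uses the concavity inequality $0\le -r(x_2)\le -x_2 r'(x_2)$ to force $r'(x_2)\neq 0$ and hence $g(x_2)\neq 0$. Your integral identity $g(x_2)=\int_0^{x_2}(b_2-s)F''(s)\,ds$ handles both cases at once: the integrand has a fixed sign on $(-|b_2|,|b_2|)$, so a zero of $g$ at $x_2\neq 0$ would force $F''\equiv 0$ on a nondegenerate interval, contradicting strict concavity in the sense of Definition~\ref{def: convex}. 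This also makes the dependence of the neighborhood on $|b_2|$ explicit, which the paper's approach does not.

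The verification of parts (b) and (c) is identical to the paper's.
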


\begin{proof}
\textbf{1. }Writing $F$ as in \eqref{TF} and noting that $p_j(\oxi^i(0))=0\Leftrightarrow p_j=0$, we see that $\usigma\in G^{2k}_d\setminus G^{2k+1}$ for some $k$, (resp. $\usigma\in G^\infty_d$) if and only if $F$ has the form 
\begin{align}\label{c1y}
\begin{split}
&(a) F(x_2)=1+c_{2k}x_2^{2k}+r(x_2), \;\text{ where }c_{2k}<0\text{ and }r(x_2)=O(|x_2|^{2k+1}),\\
&(b)\text{ respectively, } F(x_2)=1+r(x_2),\;\text{ where }r(x_2)=O(|x_2|^\infty)\leq 0.
\end{split}
\end{align}
We have $(F(x_2),x_2,t)\in G_{\phi_i}\Leftrightarrow F(x_2)-1-F'(x_2)(x_2-b_2)=0$.    In cases (a), (b) this means
\begin{align}\notag
\begin{split}
&(a)\; 0=c_{2k}x_2^{2k}-2kc_{2k}x_2^{2k-1}(x_2-b_2)+O(|x|^{2k})=x^{2k-1}[-2kc_{2k}(x_2-b_2)+O(|x_2|])\\
&(b)\; 0=r(x_2)-r'(x_2)(x_2-b_2).
\end{split}
\end{align}
Clearly, (a) holds for small $x_2$ if and only if $x_2=0$.  In case (b) the strict concavity of $F$ implies
\begin{align}
0\leq r(0)-r(x_2)\leq r'(x_2)(-x_2)\Leftrightarrow 0\leq -r(x_2)\leq r'(x_2)(-x_2).
\end{align}
For $x_2\neq 0$ small this implies $|r'(x_2)|\geq |r(x_2)|/|x_2|$. Since $r'(x_2)\neq 0$ for $x_2\neq 0$ small and $x_2-b_2\sim 1$, we see that  (b) holds for $x_2\neq 0$ small if and only if $x_2=0$.\footnote{Here we use that $r'$ is strictly decreasing, since $F$ is strictly concave.}

\textbf{2. }Condition (b) in the GS assumption holds, since $\urho=(1,0,t,0,-b_2/|b_2|,-1)$ and  $H_p\zeta(\urho)=2(-b_2/|b_2|)\partial_{x_2}(x_2)\neq 0.$

\textbf{3. }Condition (c) in the GS assumption is a consequence of the strict convexity of $\mathcal{O}$ near $(1,0)$.   Indeed, 
points in the graph of $d\phi_0$ near $\usigma$ have the form $(\ox,t',\xi_1^i(\ox)\nabla F(\ox)+\oxi^i(\ox),-1)$ for $(\ox,t')$ near $(0,t)$, so condition (c) follows directly from Remark \ref{d12x}.
\end{proof}

\subsection{Spherical waves grazing nD obstacles with an extra symmetry}\label{nd}

\emph{}


Here we present a class of obstacles $\cO\subset \mathbb{R}^n$, $n\geq 2$,   satisfying  the grazing set assumption.   This class includes examples where the order of grazing is infinite.

\begin{ass}\label{ndA}
Let  $\mathcal{O}\subset \mathbb{R}^n$, $n\geq 2$,  be an obstacle that is strictly convex near $x=(1,0)$,  and which is defined by a function  $F$ as in Assumption \ref{n1z} that satisfies the additional conditions:
\begin{equation}\begin{gathered}\label{r20}
    F(\ox) = 1-h(|\Lambda \ox|^2), \
    h\in C^{\infty}( [0,R);[0,\infty) )\text{ for some }R>0, \\
    h(0)=0, \ h^{\prime}|_{(0,R)}>0, \ h^{\prime\prime}|_{[0,R)}\geq 0,\\ 
    \frac{h(s)}{h'(s)}|_{(0,R)}=sp(s),  \text{ where }p\in C^1([0,R)),\\
    \Lambda \text{ is a nonsingular constant matrix. }
\end{gathered}\end{equation}
\end{ass}

\begin{rem}
\textup{The last condition on $h$ is satisfied with $p\in C^1$ whenever $h^{(k)}(0)\neq 0$ for some $k$.  The condition is also satisfied for some functions that vanish to infinite order at $s=0$, for example, $h(s)=\begin{cases}e^{-s^{-2}},\;s\neq 0\\0,\;s=0\end{cases}$.}

\end{rem}

\begin{prop}\label{rnd}
Suppose $\cO\subset \mathbb{R}^n$, $n\geq 2$ is defined by a function $F$ as in Assumption \ref{ndA}.  
Let $\phi_i=-t+|x-b|, $ where $b=(1,\ob)$, $\ob\in \mathbb{R}^{n-1}\setminus \{0\}$.     The GS assumption, Assumption \ref{gs}, is satisfied if we take 
$\beta=x_1-F(\ox)$,  $\zeta$ as in \eqref{r23z}, and $\usigma=i^*\urho$, where $\urho = (1,0,t, 0, -\frac{\ob}{|\ob|}, -1)$. 
\end{prop}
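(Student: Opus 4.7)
The plan is to exploit the radial structure $F(\ox) = 1 - h(|\Lambda\ox|^2)$ to reduce the grazing equation to a single scalar equation in $\ox$, and then to desingularize it using the hypothesis $h(s)/h'(s) = sp(s)$. Set $g(\ox) := |\Lambda\ox|^2 = \ox^{T} M\ox$ with $M := \Lambda^{T}\Lambda$ symmetric and positive definite, so that $\nabla_{\ox} g = 2M\ox$ and $\nabla F(\ox) = -2h'(g)\,M\ox$. The spherical grazing condition in \eqref{gr5} becomes
\[
F(\ox)-1 - \nabla F(\ox)\cdot(\ox-\ob) \;=\; -h(g) + 2h'(g)\bigl(g - \ox^{T} M\ob\bigr) \;=\; 0.
\]
For $g > 0$ I divide by $h'(g) > 0$ and substitute $h(g)/h'(g) = g\,p(g)$ to get the equivalent equation $g(2-p(g)) = 2\ox^{T}M\ob$, which holds trivially at $\ox = 0$ as well. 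I therefore take
\[
\zeta(x,t) \;:=\; g(\ox)\bigl(2 - p(g(\ox))\bigr) - 2\ox^{T} M\ob,
\]
which is $C^1$ near $\ox = 0$ since $p \in C^{1}([0,R))$. By construction, on $\{\beta = 0\}$ near $(1,0,t)$ the conditions $\zeta = 0$ and the original grazing equation are equivalent, verifying part (a) of Assumption \ref{gs}.

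Next I check the nondegeneracy and Hamiltonian-derivative conditions. Since $\nabla_{\ox} g(0) = 0$, a direct computation gives $\nabla_{\ox}\zeta(0) = -2M\ob$, which is nonzero because $M$ is invertible and $\ob \neq 0$. Together with $d\beta|_{(1,0,t)} = dx_1$ (using $\nabla F(0) = 0$), this shows $d\zeta$ and $d\beta$ are linearly independent at $(1,0,t)$, so $G_{\phi_i}$ is a $C^1$ codimension-two submanifold near $(1,0,t)$. Because $\zeta$ depends only on $\ox$ and $\oxi^{i}(0) = -\ob/|\ob|$, part (b) reduces to
\[
H_p \zeta(\urho) \;=\; 2\,\oxi^{i}(0)\cdot \nabla_{\ox}\zeta(0) \;=\; \frac{4\,\ob^{T} M\ob}{|\ob|} \;>\; 0,
\]
since $M$ is positive definite. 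Part (c) is immediate from Remark \ref{d12x}: strict convexity of $\mathcal O$ near $x = (1,0)$ forces every incoming bicharacteristic $\gamma_i(s;\ox,t)$ issuing from a nearby grazing point to lie in $T^{*}\mathring M$ for $s \neq 0$, placing the corresponding $\sigma$ in $\mathcal G_d$.

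The main obstacle sidestepped here is that the naive defining function $\zeta_0(\ox) := F(\ox) - 1 - \nabla F(\ox)\cdot(\ox-\ob)$ has $d\zeta_0(0) = 0$ whenever $h'(0) = 0$, which is precisely the regime of grazing of order $\geq 4$ discussed around Remark \ref{d14z}, where the implicit function theorem fails outright. The structural hypothesis $h(s)/h'(s) = s\,p(s)$ with $p \in C^{1}$ is exactly what is needed to strip off the degenerate factor $h'(g)$ and replace $\zeta_0$ by the $C^1$ defining function $\zeta$ whose gradient at $\ox = 0$ is forced to be nonzero by the nontriviality of $\ob$ alone, with no further assumption on the order of grazing.
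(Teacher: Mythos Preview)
The proposal is correct and follows essentially the same approach as the paper. Your function $\zeta(x,t)=g(\ox)(2-p(g(\ox)))-2\ox^{T}M\ob$ coincides with the paper's $\zeta$ in \eqref{r23z}, since for $\ox\neq 0$ one has $-h(g)/h'(g)=-g\,p(g)$ and hence $-h(g)/h'(g)+2g-2\Lambda\ox\cdot\Lambda\ob=g(2-p(g))-2\ox^{T}M\ob$; your formulation simply avoids the piecewise definition by invoking $h(s)/h'(s)=sp(s)$ at the outset, and the gradient and $H_p\zeta(\urho)$ computations then match exactly.
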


\begin{proof}
   \textbf{1. } We have $\usigma:=i^*\urho \in \mathcal G_d$, where $\urho = (1,0,t, 0, -\frac{\ob}{|\ob|}, -1)$. 
   We compute
\begin{align}\label{r21}
    \begin{split}
        &(a) \;\nabla F(\ox) =-2h'(|\Lambda\ox|^2)\Lambda^T\Lambda \ox,\\
        &(b)\; \mathcal{H}(\ox):=F(\ox)-1-\nabla F(\ox)\cdot (\ox-\ob)=\\
        &\qquad\quad  -h(|\Lambda\ox|^2)+2h'(|\Lambda\ox|^2)|\Lambda\ox|^2-2h'(|\Lambda\ox|^2)(\Lambda\ox\cdot\Lambda\ob)\\
        &(c)\; \nabla^2 F(\ox)=-2 h'(|\Lambda\ox|^2)\Lambda^T\Lambda - 4 h''(|\Lambda \ox|^2) \ (\Lambda^T\Lambda \ox)\otimes (\Lambda^T\Lambda \ox).
    \end{split}
    \end{align}
\noindent From \eqref{r21}(c) we see that $\nabla^2 F(\ox)<0$ for $\ox\neq 0$.   

   Write the Taylor expansion of $h$ at $s=0$ as
\begin{align*}
h(s)=\sum^k_{j=1}\frac{h^{(j)}(0)}{j!}s^j+O(s^{k+1}),
\end{align*}
and observe that the conditions on $h$ imply that the first nonzero coefficient (if there is one) must be \emph{positive}. 
A computation similar to  \eqref{d14} shows that 
\begin{align}\label{r23}
\begin{split}
\usigma\in G^{2k}_d\setminus G^{2k+1} & \Leftrightarrow h^{(j)}(0)=0 \text{ for }j=1,\dots,k-1\text{ and }h^{(k)}(0)>0;\\
\usigma\in G^{\infty}_d & \Leftrightarrow h^{(j)}(0)=0 \text{ for all }j.
\end{split}
\end{align}
Both cases give $\usigma\in \mathcal G_d$.

\textbf{2. }To verify Assumption \ref{gs} we recall that the grazing set is determined by 
$\mathcal{H}(\ox)=0$.   From \eqref{r21}(b) and the properties of  $h$ we see that this is equivalent to $\zeta(\ox)=0$, where 
\begin{align}\label{r23z}
\zeta(\ox):=\begin{cases}\frac{-h(|\Lambda\ox|^2)}{h'(|\Lambda\ox|^2)}+2|\Lambda\ox|^2-2\Lambda\ox\cdot\Lambda\ob,\;\ox\neq 0\\0,\;\ox=0\end{cases}.
\end{align}
We claim that for $\ox$ small  $\zeta$ is $C^1$ and $\partial_{\ox}\zeta(0)=-2\Lambda^T\Lambda\ob\neq 0.$   
The claim holds if
$f(\ox):=\begin{cases}\frac{h(|\Lambda\ox|^2)}{h'(|\Lambda\ox|^2)},\;\ox\neq 0\\0,\;\ox=0\end{cases}$ satisfies $f\in C^1$ for $\ox$ small with $\nabla f(0)=0$.  That follows easily from the assumptions on $h$.

\textbf{3. }We have $H_p\zeta(\urho)=-2\frac{\ob}{|\ob|}\cdot \partial_{\ox}\zeta(0)=4|\Lambda\ob|^2/|\ob|\neq 0$, so condition (b) of the GS assumption holds.  Condition (c)  is a consequence of strict convexity as in the proof of Proposition \ref{c1z}.  
\end{proof}

\subsection{A class of 3D obstacles for which the GS assumption holds}\label{3do}

The next Theorem is our main positive result on the grazing set  assumption for 3D obstacles and incoming spherical waves.  It shows that for obstacles defined by $x_1<F(\ox)$, the GS assumption is satisfied by incoming spherical waves with phase $\phi_i(x,t)=-t+|x-b|$, where $b=(1,\ob)$, $\ob\in\mathbb{R}^2\setminus\{0\}$, whenever $F(\ox)$ has a Taylor expansion at $0$ of the form \eqref{u1w} with $\nabla^2 G_{2k}(\ox)>0$ for $\ox>0$.  These obstacles are  more general then those considered in the $n-$dimensional result, Proposition \ref{rnd}.


 It will be convenient below sometimes to write $\ox=(x_2,x_3)=(u,v)$.

\begin{theo}\label{u1}
 Suppose for some $k\in\mathbb{N}$ that 
 \begin{align}\label{u1w}
F(\ox)=1-G_{2k}(\ox)+r(\ox),
\end{align}
 where
$G_{2k}$ is a homogeneous real polynomial of degree $2k$ in $\ox=(x_2,x_3)$ such that 
\begin{align}\label{u1ww}
\nabla^2 G_{2k}(\ox)>0 \text{ for }\ox\neq 0,
\end{align}
 and $r(x)=O(|\ox|^{2k+1})$ is a $C^\infty$ remainder term.
Define the obstacle $\mathcal{O}\subset \mathbb{R}^3$ as in Assumption \ref{n1z}  
by $x_1=F(\ox)$, and take the incoming phase 
to be $\phi_i=-t+|x-b|$, $b=(1,\ob)$ with $\ob\in\mathbb{R}^2\setminus \{0\}$.  The GS assumption, Assumption \ref{gs}, is satisfied if we take 
$\beta=x_1-F(\ox)$ and $\usigma=i^*\urho$, where $\urho = (1,0,t, 0, -\frac{\ob}{|\ob|}, -1)$; the function $\zeta$ has the form $x_2-u(x_3)$ for some $C^\infty$ function $u(\cdot)$ or the form $x_3-v(x_2)$ for some $C^\infty$ function $v(\cdot)$.


\end{theo}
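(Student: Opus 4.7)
The plan is to analyze the spatial projection $\pi_{\ox}G_{\phi_i}$, which in the spherical case is the zero set of
\[
  \mathcal{H}(\ox) := F(\ox)-1-\nabla F(\ox)\cdot(\ox-\ob)
\]
(cf.\ Remark \ref{gr5y}(3)). Substituting $F=1-G_{2k}+r$ and using Euler's identity $\nabla G_{2k}(\ox)\cdot\ox=2k\,G_{2k}(\ox)$ gives
\[
  \mathcal{H}(\ox)=-\nabla G_{2k}(\ox)\cdot\ob+(2k-1)G_{2k}(\ox)+r(\ox)-\nabla r(\ox)\cdot(\ox-\ob),
\]
whose leading homogeneous part at $\ox=0$ is $L(\ox):=-\nabla G_{2k}(\ox)\cdot\ob$, of degree $2k-1$. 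The hypothesis \eqref{u1ww} combined with $\langle\nabla^2 G_{2k}(\ox)\ox,\ox\rangle=2k(2k-1)G_{2k}(\ox)$ forces $G_{2k}(\ox)>0$ for $\ox\neq 0$; since the Taylor expansion \eqref{TF} then satisfies $p_j=0$ for $1<j<2k$ and $p_{2k}(\oxi^i(0))=-G_{2k}(\ob/|\ob|)<0$, equation \eqref{d14} immediately yields $\usigma\in G_d^{2k}\setminus G^{2k+1}$.

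Next I would analyze the zero set of $L|_{S^1}$. By \eqref{u1ww}, $G_{2k}$ is strictly convex on every line not through the origin, so the sublevel set $\{G_{2k}\leq 1\}$ is compact, convex, and bounded by a smooth strictly convex closed curve. For each direction $\ob$ there are therefore exactly two points on this curve where the tangent is parallel to $\ob$; by $G_{2k}(-\ox)=G_{2k}(\ox)$ the two points are antipodal, so $L|_{S^1}$ has precisely two zeros $\pm\omega_1$. Simplicity of these zeros amounts to $\langle\nabla^2 G_{2k}(\omega_1)\omega_1^\perp,\ob\rangle\neq 0$, which follows by contradiction: otherwise, combined with $\nabla^2 G_{2k}(\omega_1)\omega_1=(2k-1)\nabla G_{2k}(\omega_1)\perp\ob$ (Euler), the invertible matrix $\nabla^2 G_{2k}(\omega_1)$ would send the basis $\{\omega_1,\omega_1^\perp\}$ of $\mathbb{R}^2$ into the one-dimensional subspace $\ob^\perp$.

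After rotating $(x_2,x_3)$ so that $\omega_1=(1,0)$ (renaming the rotated coordinates as $(u,v)$), $L$ vanishes on the $u$-axis and factors as $L(u,v)=vL_1(u,v)$, where $L_1$ is a homogeneous polynomial of degree $2k-2$ whose restriction to $S^1$ is nowhere zero (else $L|_{S^1}$ would have zeros off $\pm\omega_1$); by connectedness and homogeneity $L_1$ has constant sign on $\mathbb{R}^2\setminus\{0\}$, and in particular $L_1(1,0)\neq 0$. The heart of the argument is the blow-up $v=u^2w$. Expanding $r$ and the components of $\nabla r$ via Hadamard's lemma as $\sum_{a+b=m}u^a v^b \phi_{ab}(u,v)$ with $m=2k+1$ and $m=2k$ respectively, and observing that both $G_{2k}(u,u^2w)$ and $L(u,u^2w)=u^2wL_1(u,u^2w)$ already carry $u^{2k}$ as a factor after collecting monomials, one checks that $u^{-2k}\mathcal{H}(u,u^2w)$ assembles into a function
\[
  \tilde{\mathcal{H}}(u,w)=w\,\tilde L_1(uw)+(2k-1)\tilde G(uw)+u\,s(u,w)
\]
that extends $C^\infty$ across $u=0$, with $\tilde L_1(0)=L_1(1,0)$ and $\tilde G(0)=G_{2k}(1,0)$. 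Since $\partial_w\tilde{\mathcal{H}}(0,w_0)=L_1(1,0)\neq 0$ at the unique root $w_0$ of $\tilde{\mathcal{H}}(0,\cdot)$, the implicit function theorem produces a smooth $W$ with $\tilde{\mathcal{H}}(u,W(u))\equiv 0$ near $u=0$, hence a smooth curve $v=V(u):=u^2W(u)$ of zeros of $\mathcal{H}$.

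Finally, I would combine uniqueness with the verification of Assumption \ref{gs}. Outside any angular neighborhood of $\pm\omega_1$ on $S^1$, the expansion $\mathcal{H}(\rho\omega)/\rho^{2k-1}=-\nabla G_{2k}(\omega)\cdot\ob+O(\rho)$ gives a uniform lower bound and rules out nearby zeros for $|\ox|$ small; inside such a neighborhood, $\partial_v\mathcal{H}(u,v)=L_1(u,v)+O(|\ox|^{2k-1})$ inherits the constant sign of $L_1$ (since $|L_1(\ox)|\geq c|\ox|^{2k-2}$ by compactness of $S^1$), so $\mathcal{H}(u,\cdot)$ is strictly monotone in $v$ for each small $u\neq 0$ and uniquely selects $V(u)$. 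Translated back to original $(x_2,x_3)$ coordinates, the curve remains smooth and tangent to $\omega_1$, so we take $\zeta=x_3-v(x_2)$ when the $x_2$-component of $\omega_1$ is nonzero and $\zeta=x_2-u(x_3)$ otherwise; this makes part (a) of Assumption \ref{gs} automatic. For (b), $H_p\zeta(\urho)=-\frac{2}{|\ob|}\ob\cdot\nabla_{\ox}\zeta(0)$ is nonzero because $\nabla_{\ox}\zeta\parallel\omega_1^\perp$ while $\ob\not\parallel\omega_1$: Euler gives $\omega_1\cdot\nabla G_{2k}(\omega_1)=2k\,G_{2k}(\omega_1)>0$, whereas $\ob\perp\nabla G_{2k}(\omega_1)$ by choice of $\omega_1$. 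Condition (c) is then Remark \ref{d12x} together with strict convexity of $\mathcal{O}$ near $(1,0)$, which holds because $\nabla^2 F=-\nabla^2 G_{2k}+\nabla^2 r<0$ on a punctured neighborhood of $\ox=0$. I expect the main obstacle to be verifying the smoothness of $\tilde{\mathcal{H}}$ through $u=0$: individual pieces like $u^{-2k}\nabla G_{2k}(u,u^2w)\cdot\ob$ and the Hadamard contributions from $r,\nabla r$ look singular in isolation, and it is only the factorization $L=vL_1$ coming from the transversality step, together with careful bookkeeping of monomials in the blow-up, that makes the combination $C^\infty$.
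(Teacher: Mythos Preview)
Your argument is correct and reaches the same conclusion, but by a genuinely different route than the paper's. Both proofs hinge on the same key fact: the leading homogeneous part $L(\ox)=-\nabla G_{2k}(\ox)\cdot\ob$ vanishes along a \emph{single} direction $\omega_1$, and this zero is \emph{simple}. The paper establishes simplicity via the factorization $g=(u-\alpha v)h$ and then performs a projective-type substitution $z=u/v$, rewriting $-\mathcal{H}(u,v)=v^{2k-1}p(u,v;z)$ with $p(0,0;z)=\tilde g(z)$; a single application of the implicit function theorem to $p$ at the simple root $z=\alpha$ gives $z=z(u,v)$, and a second one turns $u-z(u,v)v=0$ into $u=u(v)$. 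Your approach instead rotates $\omega_1$ to the $u$-axis, factors $L=vL_1$, and uses a weighted blow-up $v=u^2w$ to desingularize $\mathcal{H}$, followed by a separate angular-sector argument for uniqueness. The paper's method is slightly more economical (no rotation, no separate uniqueness discussion, and the reduction to a polynomial in $z$ with coefficients linear in $(u,v)$ makes the smoothness transparent), while your blow-up encodes more explicitly the actual scaling $v\sim u^2$ of the grazing curve.

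Two small inaccuracies are worth fixing. First, the remainder in your blown-up expression is not quite $u\,s(u,w)$: the term $u^{-2k}\nabla r(u,u^2w)\cdot\ob$ contributes a constant at $u=0$ (from the $u^{2k}$ monomial in the Hadamard expansion of $\nabla r\cdot\ob$). This is harmless because that constant is $w$-independent, so $\partial_w\tilde{\mathcal{H}}(0,w)=L_1(1,0)\neq 0$ still holds and the implicit function theorem applies. Second, the formula $\partial_v\mathcal{H}=L_1+O(|\ox|^{2k-1})$ is not literally correct: the term $v\,\partial_v L_1$ is also of degree $2k-2$. Your monotonicity conclusion survives because in the narrow cone $|v|\le\epsilon|u|$ one has $|v\,\partial_v L_1|\le C\epsilon|u|^{2k-2}$ while $|L_1|\ge c|u|^{2k-2}$, so $L_1$ still dominates for $\epsilon$ small; just state the estimate this way.
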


\begin{rem}\label{u1x}

\textup{Examples presented in \S\ref{bad} show that even for strictly concave functions $F$ satisfying Assumption \ref{as1},  if the lowest order (nonconstant) homogeneous polynomial $-G_{2k}$  in the Taylor expansion of $F(\ox)$ at $\ox=0$ fails to satisfy \eqref{u1ww}, 
then some or all of the  conclusions of Theorem \ref{u1}  can fail.}

\textup{(a)\;For example, if $b=(1,-1,0)$ and $F(\ox)=1-(x_2^4+x_3^4)$, so $k=2$ and $G_4(\ox)=x_2^4+x_3^4$,  then the grazing set is $C^1$ but not $C^2$; see \S\ref{c1spher}.  In this case $F$ and $-G_4$ are strictly concave, but \eqref{u1ww} does not hold.} 

\textup{(b) If $b=(1,-1,0)$ and  $F(\ox)=1-(x_2^4+x_3^2)$, so $k=1$ and  $G_2(\ox)=x_3^2$, the grazing set has a cusp; see \S\ref{cusp}.   In this case  $F$ is strictly concave, but $-G_2(\ox)$ is not strictly concave.}

\textup{(c) Part of the conclusion of the theorem is that one obtains a $C^\infty$ grazing set for \emph{all} nonzero choices of $b$ in the plane $x_1=1$.   When the condition \eqref{u1ww} fails,  the grazing set can be $C^\infty$ for some choices of $b$ in the plane $x_1=1$ and cusped for other choices.  See Remark \ref{loc}.}

\textup{In these examples $\usigma=i^*(1,0,t,0,-\frac{\ob}{|\ob|},-1)\in G^4_d\setminus G^5$.}
\end{rem}


\begin{proof}[Proof of Theorem \ref{u1}]

\textbf{1.  The case $r=0$. } We first treat the case $r=0$, which occupies most of the proof.
Switching to $\ox=(u,v)$ notation and writing $G=G_{2k}$ in the rest of this proof, we note that the condition 
\begin{align}\label{u3}
\nabla^2G(u,v)>0\text{ for }(u,v)\neq 0
\end{align}
implies $G(u,v)>0$ for $(u,v)\neq 0$.  
Thus, $G(\ob)>0$, so \eqref{d14} shows that $\usigma\in G^{2k}_d\setminus G^{2k+1}$. 

\textbf{2. }The grazing set is defined by the equation
\begin{align}\label{u2}
\mathcal{H}(u,v):=F(u,v)-1-\nabla F(u,v)\cdot [(u,v)-\ob]=0,
\end{align}
or equivalently, since $\nabla G(u,v)\cdot (u,v)=2k G(u,v)$, 
\begin{align}\label{u3a}
 \mathcal{H}(u,v)=(2k-1)G(u,v)+\nabla G(u,v)\cdot (-\ob)=0.
 \end{align}

\textbf{3. }Set $g(u,v):=\nabla G(u,v)\cdot \ob$, a  homogeneous polynomial  of degree $2k-1$.  To understand the zero set of $\mathcal{H}$ near $(0,0)$, we first study the zero set of $g$.  
The homogeneity implies that the real zero set of $g$ is a union of at most $2k-1$ lines through the origin.     We claim  that \eqref{u3}  implies there is only one line.   To see this fix $\eps>0$ small and define the level curve
\begin{align*}
C_\eps:=\{(u,v):G(u,v)=\eps\}.
\end{align*}
This is a compact strictly convex $C^\infty$ curve enclosing $0$ with positive curvature at all points.\footnote{Compactness follows from 
$G(u,v)\geq C|(u,v)|^{2k}$, and the other properties follow from \eqref{u3}.}     Now on $C_\eps$ we have $g(u,v)=0\Leftrightarrow \nabla G(u,v)= a\ob^\perp$ for some $a\neq 0$,  and the positive curvature of $C_\eps$ implies this can happen at only two points of $C_\eps$. Thus, the zero set of $g$ must consist of just one line through the origin.   The line has the form $v=0$ or $u=\alpha v$ for some $\alpha\in\mathbb{R}$.   In the rest of this proof we treat the case $u=\alpha v$; the other case is treated similarly.

\textbf{4. }For $v\neq 0$ divide $g(u,v)$ by $v^{2k-1}$ to obtain 
\begin{align}\label{u4}
g(u,v)=v^{2k-1} \tilde g(z)|_{z=\frac{u}{v}},
\end{align}
where $\tilde g(z)$ is a (real) polynomial in $z$ of degree $2k-1$ such that $\tilde g(\alpha)=0$ for $\alpha$ as in step \textbf{3}.\footnote{When the line in step \textbf{3} has the form $v=0$, divide $g(u,v)$ by $u^{2k-1}$ for $u\neq 0$ to obtain $\tilde g(z)$ and set $z=\frac{v}{u}$.}   We claim $\alpha$ is a \emph{simple} root of $\tilde g$, and this is the key to the proof.  Indeed,  for some smooth polynomial $h$ of degree $2k-2$ we can write
\begin{align}\label{u5}
\begin{split}
&g(u,v)=(u-\alpha v)h(u,v) \Rightarrow \\
&\quad \nabla g(u,v)\cdot \ob=((1,-\alpha)\cdot \ob)\;h(u,v)+(u-\alpha v)\nabla h(u,v).
\end{split}
\end{align}
But $\nabla g(u,v)\cdot \ob=\nabla^2G(u,v)\ob\cdot\ob>0$ for $(u,v)\neq 0$.   Evaluating \eqref{u5} at any point $(u,v)\neq 0$ such that $u=\alpha v$ we can conclude that 
\begin{align}\label{u6}
(1,-\alpha)\cdot \ob\neq 0\text{ and }\;h(\alpha v,v)\neq 0.
\end{align}
 Now $\tilde g(z)=(z-\alpha)\tilde h(z)$ for some polynomial $\tilde h$ of degree $2k-2$, so \eqref{u6} implies $\tilde h(\alpha)\neq 0$.

\textbf{5. Zeros of $\mathcal{H}$. }For $v\neq 0$ setting $z=\frac{u}{v}$ we can rewrite \eqref{u3a} as 
\begin{align}\label{u7}
\begin{split}
&-\mathcal{H}(u,v)=g(u,v)+(1-2k)G(u,v)=v^{2k-1}\left[\tilde g(z)+G^*(u,v;z)\right], 
\end{split}
\end{align}
where $G^*(u,v;z)$ is a polynomial of degree $2k-1$ in $z$, each of whose coefficients is a real multiple of either $u$ or $v$.\footnote{For example, if $k=2$ and $G(u,v)=u^4+u^2v^2+v^4$, then $$-3G(u,v)=v^3(-3uz^3-3uz-3v)=v^3G^*(u,v;z).$$}
Thus, we have
\begin{align}\label{u8}
\tilde g(z)+G^*(u,v;x):=p(u,v;z),
\end{align}
where $p(u,v;z)$ is a polynomial of degree $2k-1$ in $z$, each of whose coefficients $p_i(u,v)$ has the form $c_i+d_iu$ or $e_i+f_iv$ for some real constants $c_i, d_i, e_i, f_i$; in particular,
\begin{align}\label{u9}
p(0,0;z)=\tilde g(z).
\end{align}
Now $p(0,0;\alpha)=0$ and by step \textbf{4} $\partial_zp(0,0;\alpha)\neq 0$.   Thus, the implicit function theorem implies 
that there is a $C^\infty$ function $z(u,v)$ defined near $(u,v)=0$ with $z(0,0)=\alpha$ such that 
\begin{align}\label{u10}
p(u,v;z)=0\Leftrightarrow z=z(u,v).
\end{align}
With \eqref{u7} this implies that for $(u,v)$ near $0$, 
\begin{align}\label{u11}
 \mathcal{H}(u,v)=0\Leftrightarrow p(u,v;z(u,v))=0.
\end{align}
Thus, the zero set of $\mathcal{H}$ near $(u,v)=0$ is given by $\zeta^*(u,v)=0$, where $\zeta^*(u,v)=u-z(u,v)v$. 
Since $\partial_u\zeta^*(0,0)\neq 0$, another application of the implicit function theorem shows that this zero set is given by $\zeta(u,v)=0$ where 
\begin{align}
\zeta(u,v)=u-u(v)
\end{align}
for some $C^\infty$ function $u(v)$ such that $u(0)=0$.

\textbf{6. }Conditions (b) and (c) of the GS assumption are both consequences of strict convexity. 
We have $H_p=2\xi_1\partial_{x_1}+2\overline\xi\partial_{\ox}-2\tau\partial_t$, so with $\urho=(1,0,t,0,-\ob,-1)$ we obtain
\begin{align*}
H_p\zeta (\urho)=-2\ob\cdot \partial_{\ox}\zeta(0)=-2\ob\cdot (1,-\alpha)\neq 0
\end{align*}
by \eqref{u6}. This gives (b).  Condition (c)  is a consequence of strict convexity as in the proof of Proposition \ref{c1z}.  

\textbf{7.  The general case $r(\ox)\neq 0$. } The above argument goes through almost without change.  In place of \eqref{u3a} we now have 
\begin{align}\label{u3z}
 \begin{split}
 &\mathcal{H}(u,v)=(2k-1)G(u,v)+\nabla G(u,v)\cdot (-\ob)+\left[r(u,v)-\nabla r(u,v)\cdot ((u,v)-\ob)\right]=0.
\end{split}
 \end{align}
The term in brackets is a sum of terms of the form $b_\alpha(u,v)(u,v)^\alpha$ where $\alpha=(\alpha_u,\alpha_v)$ is a multi-index of order $2k$ and $b_\alpha(u,v)$ is $C^\infty$.   We define $p(u,v;z)$ as in \eqref{u8}, where 
$p(u,v;z)$ is a polynomial of degree $2k-1$ in $z$, each of whose coefficients $p_i(u,v)$ now has the form $c_i+d_i(u,v)u$ or $e_i+f_i(u,v)v$ for some constants $c_i, e_i$ and $C^\infty$ functions $d_i(u,v)$, $f_i(u,v)$.   The polynomial $\tilde g$ is unchanged, so the application of the implicit function theorem works just as before, as does the rest of the proof.


\end{proof}

\begin{rem}
\textup{Proposition 8.4 of \cite{ww2023} showed by a different argument that for incoming plane waves associated to phases
$\phi_i(x,t)=-t+\ox\cdot\otheta$, $\otheta\in S^{n-2}$ and obstacles $\mathcal{O}$ satisfying the conditions of Theorem \ref{u1}, the GS assumption always holds with a grazing set that is at least $C^1$.  The proof of Theorem \ref{u1} implies that the grazing set is actually $C^\infty$.  To obtain this conclusion recall that for incoming plane waves the function $\mathcal{H}$ as in \eqref{u3a} that defines $\pi_{\ox}G_{\phi_i}$ is 
$\nabla G(u,v)\cdot \otheta$.   One can then repeat steps \textbf{3}-\textbf{7} of the above proof for this new choice of $g(u,v)$ with the minor change that the  function $G^*(u,v;z)$ as in \eqref{u8} is now completely determined by the higher order term $r(\ox)$ in expansion of $F(\ox)$.}

\end{rem}

\subsection{Branching does not occur in 3D near an isolated zero of Gauss curvature}\label{branch}

In this section we show that for a much more general class of 3D obstacles than was considered in \S\ref{3do}, \emph{branching} does not occur.  That is, 
$\pi_xG_{\phi_i}$ is at least a single {continuous} curve passing through $x=(1,0)$.   {Branching} occurs when the set $\pi_{\ox}G_{\phi_i}\setminus \{0\}$ consists of more than two smooth curves that meet and terminate at $\ox=0$. 
Observe that each curve in $\pi_{\ox}G_{\phi_i}\setminus \{0\}$ lifts under the map $\ox\mapsto (F(\ox),\ox)$ to a curve in $\pi_xG_{\phi_i}$.   We now consider strictly convex obstacles for which the condition \ref{u1ww} in Theorem \ref{u1} need not hold.  Examples below show that the GS assumption can fail for such obstacles.

\subsubsection{No branching}
We write $x=(x_1,\overline{x})$, where $\ox=(x_2,x_3)$.



\begin{ass}\label{as1}
We take $\mathcal{O}\subset \mathbb{R}^3$ and $F$ as in Assumption \ref{n1z} and require that  
\begin{align}\label{c1}
\begin{split}
&\nabla^2F(\ox)<0\text{ for }\ox\neq 0 \text{ when }\ox\text{ is near }0.
\end{split}
\end{align}
 \end{ass}

 \begin{rem}\label{curv}
\textup{Assumption \ref{as1} is equivalent to the statement that the boundary of $\mathcal{O}$ is strictly convex with $x=(F(0),0)=(1,0)$ being an isolated zero of the Gauss curvature.}
 
 \end{rem}


 We  take a spherical incoming phase  $\phi_i=-t+|x-b|$, where 
$b=(1,\ob)$  with $\ob\neq 0$.  
 The grazing set near  $x=(1,0)$ is 
$$G_{\phi_i}=\{(F(\ox),\ox,t) \mid F(\ox)-1-\nabla F(\ox)\cdot (\ox-\ob)=0, \ \ox \text{ near }0, \ t\in\mathbb{R}\}.$$
We claim that for $F$ as above,  the set 
$$\pi_{\ox}G_{\phi_i}:=\{\ox \text{ near }0 \mid \mathcal{H}(\ox):=F(\ox)-1-\nabla F(\ox)\cdot (\ox-\ob)=0\}$$ consists of a single {continuous} curve through $\ox=0$.



\begin{theo}[No branching]\label{A}
Let $F$   
satisfy Assumption \ref{as1} and take $\phi_i(x,t)=-t+|x-b|$, where $b=(1,\ob)$ with $\ob\neq 0$. Near $\ox=0$ the set $\pi_{\ox}G_{\phi_i}\setminus \{0\}$ consists of two $C^\infty$ curves, one in $x_3>0$ and one in $x_3<0$.  The set $\pi_{\ox}G_{\phi_i}$ is  near $\ox=0$ a single \emph{continuous} curve through $\ox=0$. 

\end{theo}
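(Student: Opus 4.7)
The plan is to show that $\pi_{\ox}G_{\phi_i}$ has, in a small neighborhood of $\ox=0$, exactly two smooth branches off the origin that fit together to form a single continuous arc through $\ox=0$, and then to identify the half-planes they lie in. First I differentiate the defining function
\[
\mathcal{H}(\ox) = F(\ox)-1-\nabla F(\ox)\cdot(\ox-\ob)
\]
to obtain $\nabla\mathcal{H}(\ox) = -\nabla^2 F(\ox)(\ox-\ob)$. By Assumption \ref{as1}, $\nabla^2 F(\ox)$ is negative definite (hence invertible) for $\ox\neq 0$ near $0$, and $\ox-\ob\neq 0$ since $\ob\neq 0$, so $\nabla\mathcal{H}(\ox)\neq 0$ for $\ox\neq 0$ near $0$. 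The implicit function theorem makes $\{\mathcal{H}=0\}\setminus\{0\}$ a $C^\infty$ one-dimensional submanifold near $\ox=0$, so any branch of the grazing set approaching the origin is automatically $C^\infty$ off $\ox=0$.

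The key step is to control the topology right at $\ox=0$, where $\nabla\mathcal{H}$ may degenerate (cf.\ Remark \ref{d14z}). I pass to dual coordinates via $\Phi(\ox):=\nabla F(\ox)$. Strict concavity of $F$ yields the strict monotonicity $(\Phi(\ox)-\Phi(\oy))\cdot(\ox-\oy)<0$ for $\ox\neq\oy$, so $\Phi$ is injective on a neighborhood $U$ of $0$; invariance of domain then makes $\Phi:U\to\Phi(U)$ a homeomorphism, and on $U\setminus\{0\}$ it is a $C^\infty$ diffeomorphism since $\det\nabla^2 F\neq 0$ there. I define the Legendre dual
\[
g(\eta):=\Phi^{-1}(\eta)\cdot\eta-F(\Phi^{-1}(\eta)),\qquad \eta\in\Phi(U),
\]
for which a direct computation yields $\nabla g(\eta)=\Phi^{-1}(\eta)$ on $\Phi(U)\setminus\{0\}$. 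Since $\Phi^{-1}(\eta)\to 0$ as $\eta\to 0$, $\nabla g$ extends continuously to $\eta=0$ with $\nabla g(0)=0$, and integrating along radial segments produces $g\in C^1(\Phi(U))\cap C^\infty(\Phi(U)\setminus\{0\})$.

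On the dual side, $\mathcal{H}(\ox)=0$ is equivalent, under $\eta=\Phi(\ox)$, to
\[
G(\eta):=g(\eta)-\eta\cdot\ob+1=0,
\]
where $G(0)=-F(0)+1=0$ and $\nabla G(0)=\nabla g(0)-\ob=-\ob\neq 0$. The $C^1$ implicit function theorem then realizes $\{G=0\}$ as a $C^1$ simple arc through $\eta=0$ with tangent along $\ob^\perp$, and smoothness of $G$ off $\eta=0$ upgrades it to $C^\infty$ there. Pulling back by the homeomorphism $\Phi^{-1}$ (which is $C^\infty$ off $0$) gives $\pi_{\ox}G_{\phi_i}=\Phi^{-1}(\{G=0\})$ near $\ox=0$ as a continuous simple arc through $\ox=0$ consisting of exactly two $C^\infty$ branches off the origin, so branching is ruled out.

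It remains to show that one branch lies in $\{x_3>0\}$ and the other in $\{x_3<0\}$. After a rotation in $\ox$ I may assume $\ob=(b_2,0)$ with $b_2\neq 0$. Writing $g_0(x_2):=F(x_2,0)$, which is strictly concave near $0$ with $g_0(0)=1$ and $g_0'(0)=0$, the restriction to the $x_2$-axis becomes
\[
\mathcal{H}(x_2,0)=g_0(x_2)-1-(x_2-b_2)g_0'(x_2).
\]
The strict concavity inequalities $g_0(x_2)-1<0$ and $g_0(x_2)-1>x_2 g_0'(x_2)$ for $x_2\neq 0$ small combine to give $\mathcal{H}(x_2,0)\neq 0$ off $x_2=0$, with opposite signs on the two sides of $x_2=0$. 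Thus the grazing set avoids the $x_2$-axis in the punctured disk, each connected smooth branch lies entirely in $\{x_3>0\}$ or $\{x_3<0\}$, and the sign change of $\mathcal{H}$ across $x_2=0$ combined with the intermediate value theorem on any small circle around $\ox=0$ forces exactly one branch into each half-plane. The main obstacle is the second paragraph: establishing $C^1$-regularity of the Legendre dual $g$ at $\eta=0$ despite the possible failure of smoothness of $\Phi^{-1}$ there, as this is what restores applicability of the implicit function theorem on the dual side and bypasses the failure of transversality of $\mathcal{H}$ at $\ox=0$ flagged in Remark \ref{d14z}.
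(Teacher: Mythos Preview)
Your argument is correct and takes a genuinely different route from the paper's.  The paper proceeds geometrically: after rotating so that $\ob=(-1,0)$, it slices $\partial\mathcal{O}$ by half-planes $P(A^*)$ through $b$, verifies that each slice $C(A^*)=P(A^*)\cap\partial\mathcal{O}$ is a smooth strictly convex closed curve, and then shows by a monotonicity-of-angle argument that exactly two rays from $b$ are tangent to $C(A^*)$, one in $x_3>0$ and one in $x_3<0$; sweeping $A^*$ traces the two branches.  Your approach is analytic: the change of variable $\eta=\nabla F(\ox)$ (a homeomorphism by strict concavity and invariance of domain, a $C^\infty$ diffeomorphism off $0$ by Assumption~\ref{as1}) converts $\mathcal{H}=0$ into $G(\eta)=g(\eta)-\eta\cdot\ob+1=0$, and the key observation is that the Legendre dual $g$ is $C^1$ at $\eta=0$ with $\nabla g(0)=0$, so $\nabla G(0)=-\ob\neq 0$ and the $C^1$ implicit function theorem applies directly in the dual picture, bypassing the transversality failure of $\mathcal{H}$ at $\ox=0$.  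This is more conceptual and arguably shorter; the paper's method is more constructive and gives a concrete geometric picture of the branches.  One small remark: your ``IVT on a small circle'' step is fine, but it is slightly cleaner to note that since $\nabla G\neq 0$ near $\eta=0$, the arc $\{\mathcal{H}=0\}$ separates a small punctured $\ox$-disk into regions where $\mathcal{H}$ has opposite signs; the two half-axes $\{x_2\gtrless 0,\,x_3=0\}$ lie in different regions, so both branches cannot sit in the same half-plane $\{\pm x_3>0\}$.
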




\begin{proof}

\textbf{1. Smoothness of $\pi_{\ox}G_{\phi_i}$ away from $\ox=0$.}  For $\ox$ near $0$, $\ox\neq 0$ we compute
\begin{align}\label{c2}
\nabla\mathcal{H}(\ox)=\nabla F(\ox)-\nabla^2 F(\ox)(\ox-\ob)-\nabla F(\ox)=-\nabla^2 F(\ox)(\ox-\ob)\neq 0,
\end{align}
since $\nabla^2 F<0$ for $\ox\neq 0$.   Thus, $\pi_{\ox}G_{\phi_i}$ is $C^\infty$ away from $\ox=0$.

In steps \textbf{2}-\textbf{7} we first consider the case $b=(1,-1,0)$.  

\textbf{2. The  curve $C(A^*)$. }In the $x_1,x_2$ plane fix a point $A^*=(F(x_2^*,0),x_2^*)$  near $(x_1,x_2)=(1,0)$ with $x_2^*<0$ on the  strictly concave curve $x_1=F(x_2,0)$, and draw the infinite ray from  $(1,-1)$ toward $A^*$.  
This ray intersects $x_1=F(x_2,0)$ at $A^*$ and at a second point $B^*=(F(x_2^{**},0),x_2^{**})$ in $x_2>0$. 
Let $P(A^*)$ denote the half-plane in $x-$space consisting of all points that project to this ray under $(x_1,x_2,x_3)\to (x_1,x_2)$.   We have
\begin{align}\label{c3}
P(A^*)=\{(x_1,x_2,x_3):(x_1-1)(x_2^*+1)+(x_2+1)(1-F(x_2^*,0))=0\}.
\end{align}
Recall that the boundary of the obstacle $\mathcal{O}$ consists of points of the form $(F(\ox),\ox)$.  
Set 
\begin{align}\label{c4}
\begin{split}
&C(A^*)=P(A^*)\cap \partial\mathcal{O}=\\
&\qquad\quad \{(F(\ox),\ox): K(\ox):=(F(\ox)-1)(x_2^*+1)+(x_2+1)(1-F(x_2^*,0))=0\}.
\end{split}
\end{align}
We will show that $C(A^*)$ is a $C^\infty$ closed strictly convex curve that contains exactly two points in $\pi_x G_{\phi_i}$.

\textbf{3. Smoothness of $C(A^*)$. }We show that near $\ox=0$
\begin{align}\label{c5a}
\nabla K(\ox)=(F_{x_2}(\ox)(x_2^*+1)+1-F(x_2^*,0), F_{x_3}(\ox)(x_2^*+1))
\end{align}
is nonzero at any $\ox$ where $K(\ox)=0$. If $F_{x_3}(\ox)\neq 0$, then $\nabla K(\ox)\neq 0$, so suppose $F_{x_3}(\ox)=0$. 
We show now that the conditions 
\begin{align}\label{c5}
(a) F_{x_3}(\ox)=0, \;(b) K(\ox)=0, \;(c) K_{x_2}(\ox)=0
\end{align}
are incompatible.     Conditions \ref{c5} (b),(c) imply
\begin{align}\label{c6}
\begin{split}
&F_{x_2}(\ox)=\frac{F(x_2^*,0)-1}{x_2^*+1}=\frac{F(\ox)-1}{x_2+1}<0.
\end{split}
\end{align}
 A Taylor expansion gives
\begin{align}\label{c7}
\begin{split}
1-F(\ox)= & F(0)-F(\ox) \\
= & -\nabla F(\ox)\ox+\int^1_0(1-s)\langle\nabla^2F(\ox(1-s))ds \;\ox,\ox\rangle\\
= & -F_{x_2}(\ox)x_2+\int^1_0(1-s)\langle\nabla^2F(\ox(1-s))ds \;\ox,\ox\rangle,
\end{split}
\end{align}
where we used \eqref{c5}(a) to get the third equality.  From \eqref{c6} and \eqref{c7} we obtain
\begin{align}
\begin{split}
F_{x_2}(\ox)(x_2+1)= & F_{x_2}(\ox)x_2-\int^1_0(1-s)\langle\nabla^2F(\ox(1-s))ds \;\ox,\ox\rangle\\
\Rightarrow \ F_{x_2}(\ox)= & -\int^1_0(1-s)\langle\nabla^2F(\ox(1-s))ds \;\ox,\ox\rangle>0,
\end{split}
\end{align}
which contradicts \eqref{c6}.

\textbf{4. Nonvanishing curvature of  $C(A^*)$.} Let $Q=(F(\ox),\ox)\in C(A^*)$ be a point where $K_{x_2}(\ox)\neq 0$.\footnote{Points where $K_{x_3}(\ox)\neq 0$ are treated similarly.}   Near $\ox$ the curve $
\pi_{\ox}C(A^*)$ is the graph of a $C^\infty$ function $x_2=x_2(x_3).$ 
Differentiating $K(x_2(x_3),x_3)=0$ twice with respect to $x_3$ we obtain
\begin{align}\label{c8}
\begin{split}
&K_{x_2}x_2'+K_{x_3}=0,\\
&\langle\nabla^2K(x_2(x_3),x_3)(x_2',1), (x_2',1)\rangle +K_{x_2} x_2''=0\Rightarrow x_2'' \neq 0.
\end{split}
\end{align}
Here we used $\nabla^2K(\ox)=\nabla^2F(\ox)(x_2^*+1)<0.$  Near $Q$ we can parametrize $C(A^*)$ by 
\begin{align}\label{c9}
r(x_3)=(F(x_2(x_3),x_3),x_2(x_3),x_3),
\end{align}
with which we compute
\begin{align}\label{c10}
\begin{split}
r'(x_3)= & (F_{x_2}x_2'+F_{x_3},x_2',1):=(f,x_2',1)\\
r''(x_3):= & (g,x_2'',0),
\end{split}
\end{align}
so $r'\times r''=(-x_2'',g,*)\neq  0$ by \eqref{c8}.\footnote{We have 
$F_{x_2}(x_2^*,0)=\int^1_0 F_{x_2x_2}(s(x_2^*,0))ds\cdot x^*_2 >0$,  so $K_{x_2}(x_2^*,0)>0$ by \eqref{c5a}.   This implies $x_2''>0$ at $(x_2^*,0)$ by \eqref{c8}.}
    Now apply the curvature formula for a parametrized curve\begin{align}\label{c11}
\kappa(x_3)=\frac{|r'(x_3)\times r''(x_3)|}{|r'(x_3)|^3}\neq 0.
\end{align}

\textbf{5. Grazing points on $C(A^*)$.} We now show that $C(A^*)$ contains one grazing point in $x_3>0$ and one in $x_3<0$.  The directed infinite ray from $(1,-1)$ toward $A^*$  described in step \textbf{2}, ray($A^*)$, makes an angle of $\theta=0$ with the $x_1x_2$ plane. Keeping $b=(1,-1,0)$ fixed, we now increase the angle $\theta$ that ray($A^*)$ makes with the $x_1x_2$ plane, always keeping the rotated ray, ray($\theta$), in the plane $P(A^*)$.  As $\theta$ increases let $A(\theta)$ denote the point closer to $b$ at which ray($\theta$) intersects $C(A^*)$.  Strict convexity of $C(A^*)$ implies that ray($\theta)\cap C(A^*)$ contains either two points, one point, or zero points.

  Denote by $\alpha(\theta)$ the counterclockwise angle, viewed from the positive $x_1$ axis, that the tangent vector to $C(A^*)$ at 
$A(\theta)$ makes with ray($A^*)=$ray$(0)$.\footnote{At $A(0)$ take the tangent vector to $C(A^*)$ with positive $x_3-$component;  recall \eqref{c10}.  As  $\theta$ increases from $0$  choose the tangent vectors continuously.}
We have $\alpha(0)>0$ and by strict convexity of $C(A^*)$, $\alpha(\theta)$ strictly decreases as $\theta$ increases from $0$.  
The strict monotonicity of $\alpha(\theta)$  implies that if there is a positive angle $\theta$, call it $\theta_g$,  where $\theta_g=\alpha(\theta_g)$, then $\theta_g$ is unique.  The point $A(\theta_g)$ is then the unique grazing point on $C(A^*)$ in $x_3>0$.  

To see that $\theta_g$ exists, define
$$\theta_g=\sup\{\theta\in [0,2\pi): \text{ray}(\theta)\cap C(A^*) \text{ contains exactly two points}\}.$$
Indeed, the sup exists since $\pi$ is an upper bound and ray$(0)\cap C(A^*)$ (resp. ray$(\pi)\cap C(A^*)$)  contains exactly two (resp. zero) points. 
Moreover, if ray$(\theta_g)\cap C(A^*)$ contains two points or zero points, then by continuity $\theta_g$ can't be the sup.  
Thus, ray$(\theta_g)\cap C(A^*)$ contains exactly one point, so $\theta_g=\alpha(\theta_g)$.

\textbf{6. No branching. }With \eqref{c4} we see that $P(A^*)$ contains exactly two points of $\pi_xG_{\phi_i}$, one in $x_3>0$ and one in $x_3<0$.  
Moreover,
$$\cup_{A^*\text{ near }(1,0), \;x_2^*\leq 0}\;\left(P(A^*)\cap \pi_x G_{\phi_i}\right)$$
is equal to the set of points in $\pi_xG_{\phi_i}$ near $(1,0,0)$.  Step \textbf{1} implies that as $x_2^*\nearrow 0$, the points
of $P(A^*)\cap \pi_x G_{\phi_i}$ trace out two curves that are $C^\infty$ away from $(1,0,0)$.  The curves approach $(1,0,0)$ continuously as $x_2^*\nearrow 0$, since $P(A^*)$ approaches the plane $x_1=0$.

\textbf{7. General $\ob$. } First observe that the above argument applies equally well when $\ob=(-1,0)$ is replaced by $\ob=(a,0)$, $a<0$.  Suppose now that $b=(1,\ob)$ with $\ob\neq 0$ in $\phi_i=-t+|x-b|$.  
We can rotate $\ox$ coordinates keeping the $x_1$-coordinate unchanged, so that the rotated $\ob$ equals $(a,0)$ for some $a<0$.     Assumption \ref{as1} continues to hold in the new coordinates, so we can apply the above argument to rule out branching for any choice of $\ob\neq 0$.
\end{proof}

\begin{rem}[Plane waves]
\textup{The no branching result holds for incoming plane waves by a much simpler argument.  Consider the incoming phase
$\phi_i(x,t)=-t+\ox\cdot\otheta$, where $|\otheta|=1$.   The Gauss map $n:\partial\mathcal{O}\to S^2$ is the map given by 
$$(F(\ox),\ox)\to  (1,-\nabla F(\ox))/|(1,-\nabla F(\ox))|.$$  Let $C_{\otheta}\subset S^2$ be the great circle consisting  of vectors orthogonal to $(0,\otheta)$ and note that $n(1,0)=(1,0)\in C_{\otheta}$.    The grazing set $\pi_xG_{\phi_i}$ is the preimage $n^{-1}(C_{\otheta})$.   Assumption \ref{as1} implies that near $(1,0)$, the map $n$ is one-one and continuous with a continuous inverse, so $\pi_xG_{\phi_i}$ is a continuous curve near $(1,0)$.\footnote{Thanks to Mohammad Ghobi  for this observation.}}

\end{rem}

\subsection{Examples where the grazing set assumption fails or just barely holds}\label{bad}

We now present examples showing that there exist smooth strictly convex obstacles $\mathcal{O}\subset \mathbb{R}^3$ satisfying Assumption \ref{n1z}  for which the GS assumption fails for incoming spherical waves; the grazing set  is continuous but not differentiable at a high-order grazing point. It has a cusp.  In another example the GS assumption holds, but just barely; it is $C^1$ but not $C^2$.

We  also give examples for incoming planar waves where the GS assumption fails or holds just barely.\footnote{At the time of writing \cite{ww2023} we did not have such examples for incoming planar waves.}

\subsubsection{Cusped grazing sets: examples where the grazing curve is continuous but not differentiable at $v=0$; spherical waves.}\label{cusp}
The simplest example is probably $F(u,v)=1-G(u,v)$, where $G(u,v)=u^4+v^2$.   In this case the obstacle $\mathcal{O}$ defined as in Assumption \ref{n1z} is strictly convex near $x=(1,0)$,   
but we don't have $\nabla^2 G(u,v)>0$ for $(u,v)\neq 0$.   
Again we take the incoming phase to be $\phi_i=-t+|x-b|$, where $b=(1,\ob)$, $\ob=(-1,0)$.

With $\urho=(1,0,t,0,-\ob,-1)$ we have $\usigma=i^*\urho\in G^4_d\setminus G^5.$ The equation \eqref{u2} is 
\begin{align*}
3u^4+4u^3+v^2=0,
\end{align*}
which is readily seen to define $u$ as a function of $v$.
The equation implies 
\begin{align}\label{a5}
u=-v^{2/3}(4+3u)^{-1/3}\Rightarrow |u|\sim v^{2/3}
\end{align}
  for $(u,v)$ small.  
Expanding we obtain 
\begin{align*}
u=-4^{-1/3}v^{2/3}+O(v^{4/3}),
\end{align*}
which implies $u'(0)$ does not exist.   See Figure \ref{f-1}.


\begin{figure}[t] 
   \centering
   \includegraphics[width=0.3\textwidth]{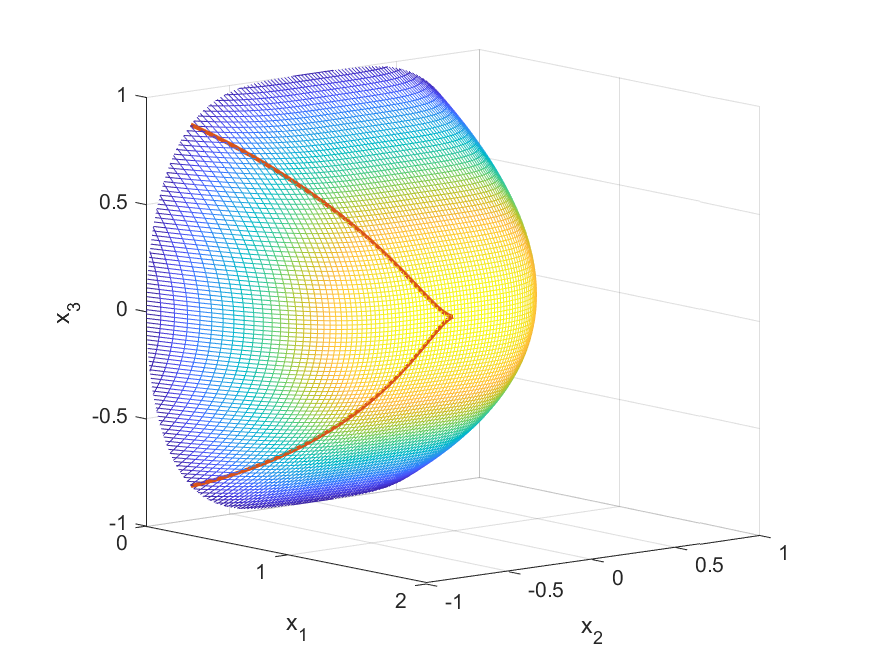} 
   \includegraphics[width=0.3\textwidth]{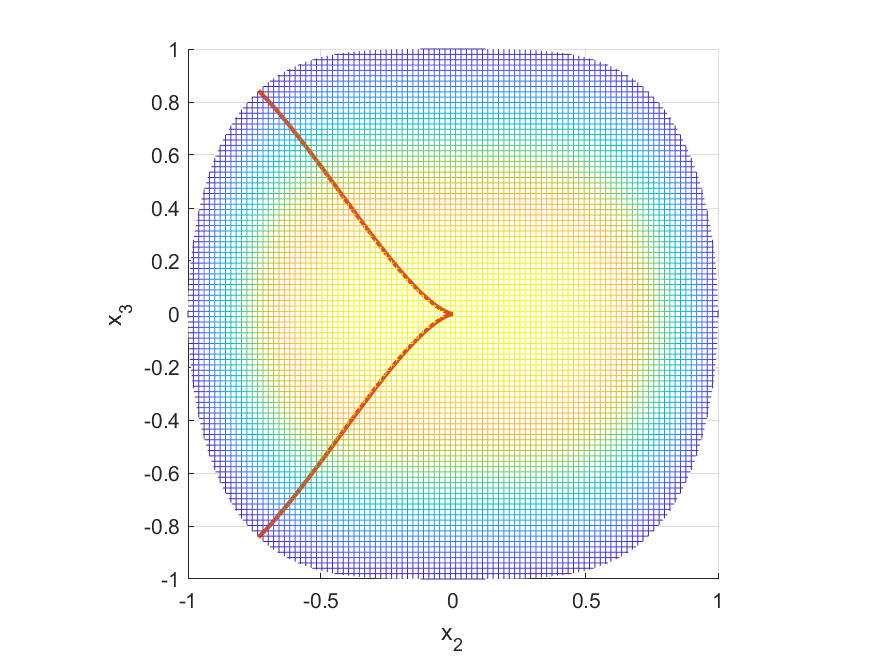} 
   \includegraphics[width=0.3\textwidth]{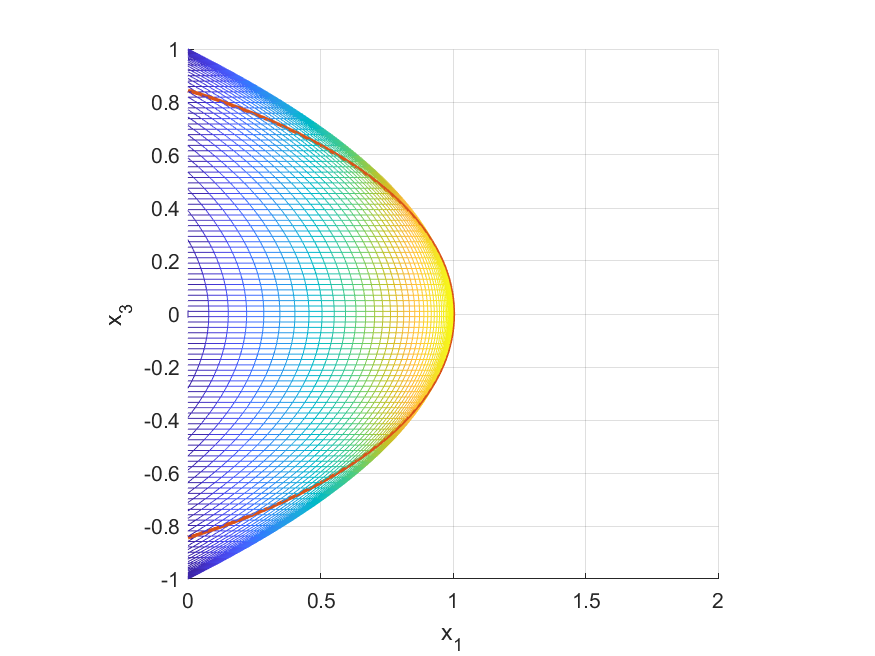}\\
   \includegraphics[width=0.3\textwidth]{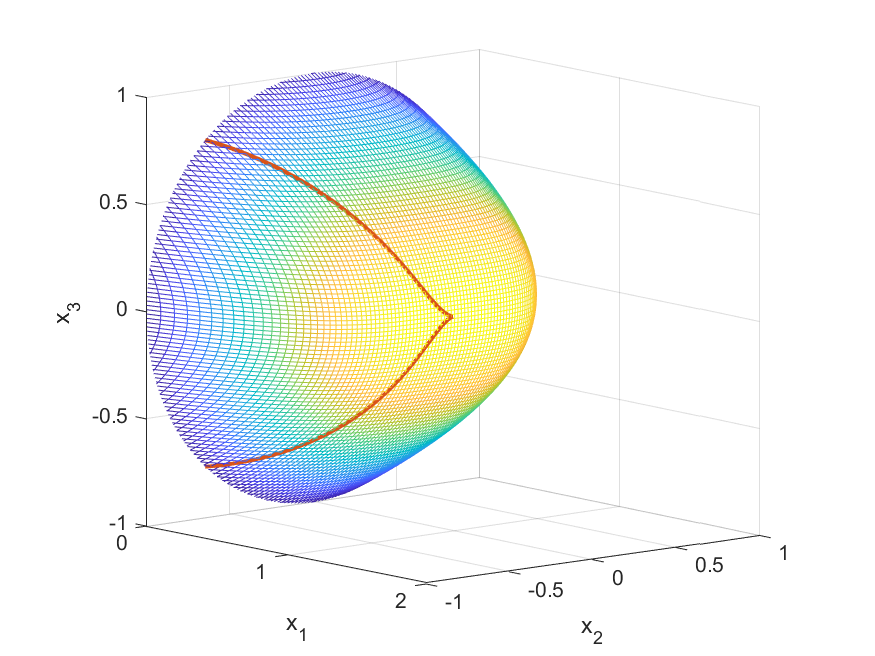}
   \includegraphics[width=0.3\textwidth]{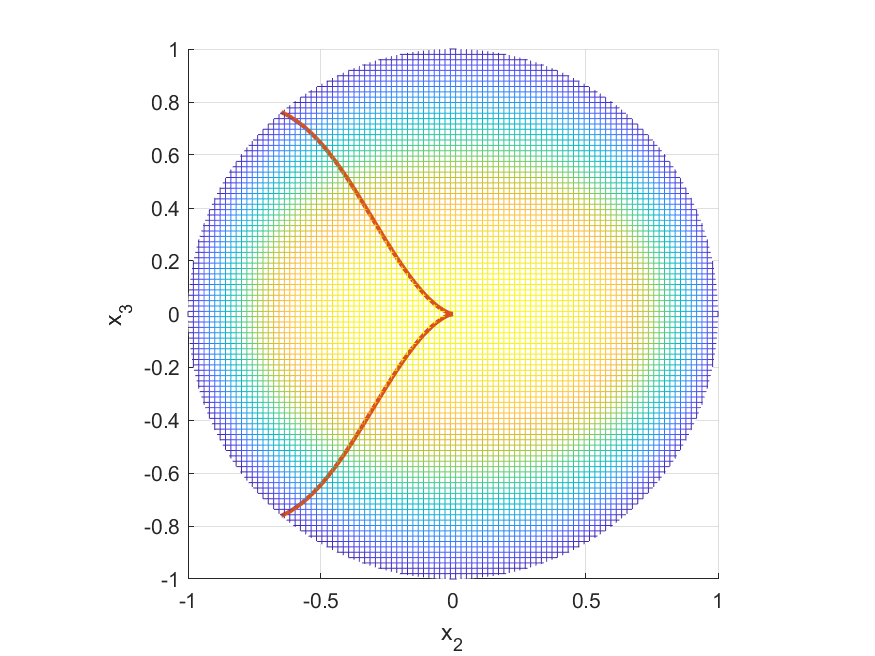} 
   \includegraphics[width=0.3\textwidth]{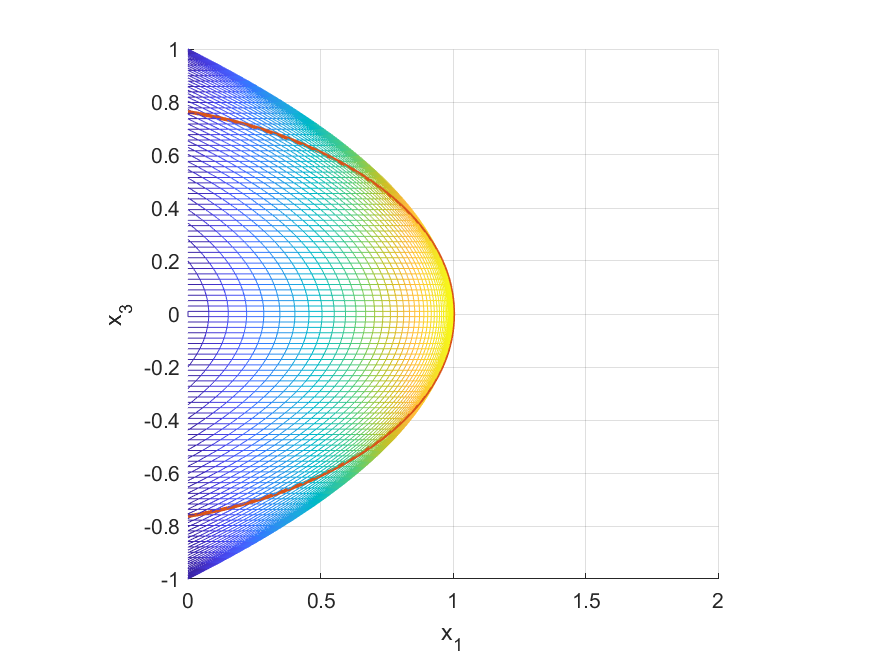} 
   \caption{Light Source at $(1,-1,0)$. First row: Cusped grazing set for the obstacle given by $x_1=1-(x_2^4+x_3^2)$. Second row: Cusped grazing set for the obstacle given by $x_1=1-(x_2^4+x_2^2 x_3^2+x_3^2)$. Colored according to level sets $x_1=\mathrm{const.}$.}
   \label{f-1}
\end{figure}


Here is another example of a cusped grazing set, but now $\nabla^2 G(u,v)>0$ for $(u,v)\neq 0$, 
Take $F(u,v)=1-G(u,v)$, where $G(u,v)=u^4+u^2v^2+v^2$.   The function $F$ satisfies Assumption \ref{as1}.   For $\urho$ as above we again have $\usigma=i^*\urho\in G^4_d\setminus G^5$.
Now the equation \eqref{u2} is 
\begin{align}\label{a6}
\begin{split}
&\mathcal{H}(u,v)=3(u^4+u^2v^2)+4u^3+2uv^2+v^2=0 \\
&\Leftrightarrow  u^3(4+3u)=-v^2(1+2u+3u^2),
\end{split}
\end{align}
whic implies that points in the zero set of $\mathcal{H}$ satisfy\footnote{To see that \eqref{a6} defines a \emph{function} $u=u(v)$ near $(u,v)=0$, suppose 
$0=\mathcal{H}(u_1,v)-\mathcal{H}(u_2,v)$, where $u_1>u_2$.  Factor out $u_1-u_2>0$ and use \eqref{a12} to deduce that the second factor is also positive, contradiction.}
\begin{align}\label{a12}
u=-4^{-1/3}v^{2/3}+O(v^{4/3}).
\end{align}

From \eqref{a12} it is clear that the limit defining $u'(0)$ does not exist.   We can get more information if we 
rewrite \eqref{a6} as
\begin{align}\label{a10}
u[(4u^2+2v^2)+3(u^3+uv^2]:=uB(u,v)=-v^2,
\end{align}
and note that $B(u,v)\gtrsim u^2+v^2\sim v^{4/3}$ for $(u,v)$ small.  Now differentiate \eqref{a10} with respect to $v$, divide by $B$,  and use 
$|v/u^2|\sim v^{-1/3}$ to obtain $|u'(v)|\sim |v|^{-1/3}$ for $v\neq 0$.   One can analyze $u''(v)$  in a similar way to see that $0<u''\sim v^{-4/3}$ for $v\neq 0$, so the graph of $u=u(v)$ is concave up viewed from the positive $u-$axis.  Thus, $u(v)$ ``behaves" near $v=0$ just like the function $w(v)=v^{2/3}$.  See Figure \ref{f-1}.

\begin{rem}\label{explain}
\textup{The apparent smoothness of the projection $\pi_{x_1,x_3}G_{\phi_i}$ in the second column Figure \ref{f-1} might seem surprising.
This is simply explained using \eqref{a5} by  observing that this projection is the graph of 
\begin{align}
\begin{split}
x_1= & F(x_2(x_3),x_3)=1-\left[x_3^{8/3}(4+3x_2(x_3))^{-4/3}+x_3^2\right]\\
=& 1-x_3^2\left[1+x_3^{2/3}(4+3x_2(x_3))^{-4/3}\right]\approx 1-x_3^2.
\end{split}
\end{align}
A similar explanation accounts for the regularity of $\pi_{x_1,x_3}G_{\phi_i}$ in the remaining figures.}
\end{rem}

\emph{Effect of changing source location on regularity of the grazing set.}
It is clear that the grazing set depends on source location.  Here we show that the regularity of the grazing set can also depend on source location.

   Again take $F=1-G(u,v)$ where $G(u,v)=u^4+v^2$, but now put the source at $c=(1,0,1):=(1,\overline{c})$, so the incoming phase is $\phi_i=-t+|x-c|$ and $$\sigma^*=i^*(1,0,t,0,-\overline{c},-1)\in G^4_d\setminus G^5.$$
The equation \eqref{u2} is now
   \begin{align*}
   3u^4+v^2-2v=0,
   \end{align*}
   which implies the grazing set is given near $(u,v)=0$ by $v=2-\sqrt{4-12u^4}$, a $C^\infty$ function.  Recall that when $b=(1,-1,0)$ we get a cusped grazing set for this obstacle.\footnote{The grazing set for $b=(1,1,0)$ has a cusp similar to that for $b=(1,-1,0)$.}
   
   \begin{rem}\label{loc}
   \textup{This dependence of the regularity of the grazing set on source location in the plane $x_1=1$ reflects the failure of 
    condition \eqref{u1ww} in Theorem \ref{u1} to hold.   Indeed, while $F(u,v)=1-(u^4+v^2)$ is strictly concave, the function $(u,v)\to -v^2$ is not.} 
   
   
   \end{rem}

   \subsubsection{A grazing set that is $C^1$ but not twice differentiable for incoming spherical waves}\label{c1spher}
Take $F(u,v)=1-G(u,v)$, where $G(u,v)=u^4+v^4$.     The incoming phase is $\phi_i=-t+|x-b|$, where $b=(1,-1,0)$.   Here again $\usigma=i^*(1,0,t,0,-\ob,-1)\in G^4_d\setminus G^5$.  In this case $-G$ is strictly concave but the assumption \eqref{u1ww} of Theorem \ref{u1} fails; we only have $-\nabla^2G\geq 0$ for $(u,v)\neq 0$.
The grazing curve is now 
\begin{align}\label{a10z}
\begin{split}
&\mathcal{H}(u,v)=3(u^4+v^4)+4u^3=0 \\
&\Rightarrow u=-3^{1/3}v^{4/3}(4+3u)^{-1/3}\Rightarrow u\sim v^{4/3}\text{ for }(u,v)\text{ near }0.
\end{split}
\end{align}
One checks as in \S\ref{cusp} that this equation determines $u$ as a function of $v$ near $(u,v)=0$.   The second line of  \eqref{a10z} implies $u'(0)=0$ and the implicit function theorem shows that in $v\neq 0$ the function $u(v)$ is $C^1$,
so now we have $u(v)$ is differentiable near $v=0$.



To see that $u$ is $C^1$ near $v=0$ rewrite $\mathcal{H}(u,v)=0$ as $u(4u^2+3u^3)=-3v^4$ and differentiate with respect to $v$ obtaining
\begin{align}\label{a9z}
u'(4u^2+3u^3)+u(8uu'+9u^2u')=u'(12u^2+12u^3)=-12v^3.
\end{align}
For $v\neq 0$ small divide by $12u^2+12u^3$ and use \eqref{a10z} to see that 
\begin{align}\label{a11z}
|u'(v)|\sim |v^3/u^2|\sim v^{1/3}. 
\end{align}
Thus $u$ is $C^1$ near $0$.  

From \eqref{a11z} it is clear that the limit defining $u''(0)$ does not exist.  We can get more information by differentiating \eqref{a9z} with respect to $v$ to obtain
\begin{align*}
|u''(v)|\sim v^{-2/3} \text{ for }v\neq 0.
\end{align*}

\begin{rem}\label{allw}
\textup{In the above example the function $g(u,v)$ as in \eqref{u4} is 
\begin{align}
g(u,v)=4u^3=v^3\tilde g(z)|_{z=\frac{u}{v}}, \; \text{ where }\tilde g(z)=4z^3,
\end{align}
so $\tilde g$ now has a triple root.   This and the failure of $\zeta=u-u(v)$ to be $C^\infty$ (or even $C^2$) reflect the fact that 
$\nabla^2(u^4+v^4)$ fails to be positive definite for $(u,v)\neq 0$; it is only positive semidefinite for $(u,v)\neq 0$.}

\end{rem}

To conclude  \S \ref{gsa} we return for a moment to the case of incoming plane waves to provide a couple of examples we had not found at the time of writing of \cite{ww2023}.

\subsubsection{A cusped grazing set for incoming plane waves}
 Take $F(u,v)=1-G(u,v)$, where $G(u,v)=u^4+2u^2v^2+uv^2+v^2$.     The incoming phase is $\phi_i=-t+(u,v)\cdot \otheta$, where $\otheta=(1,0)$.   The point $$\usigma=i^*(1,0,t,0,\otheta,-1)\in G^4_d\setminus G^5$$
and $\nabla^2 G(u,v)>0$ for $(u,v)\neq 0$.   When the source is at infinity the grazing curve is defined by 
$\nabla G(u,v)\cdot \otheta=G_u=0$ instead of \eqref{u2}.  In this case we obtain
\begin{align*}
v^2+4uv^2+4u^3=0; \text{ that is }4u^3=-v^2(1+4u).
\end{align*}
This is similar to \eqref{a6}. The same analysis shows we have a cusped grazing curve.  See Figure \ref{f-3}.

\begin{figure}[t] 
   \centering
   \includegraphics[width=0.3\textwidth]{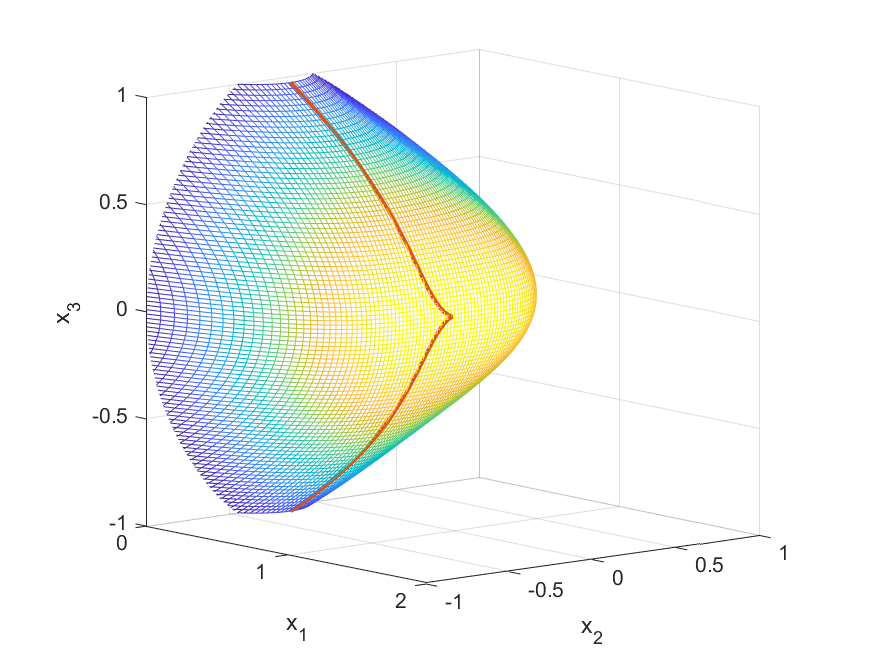} 
   \includegraphics[width=0.3\textwidth]{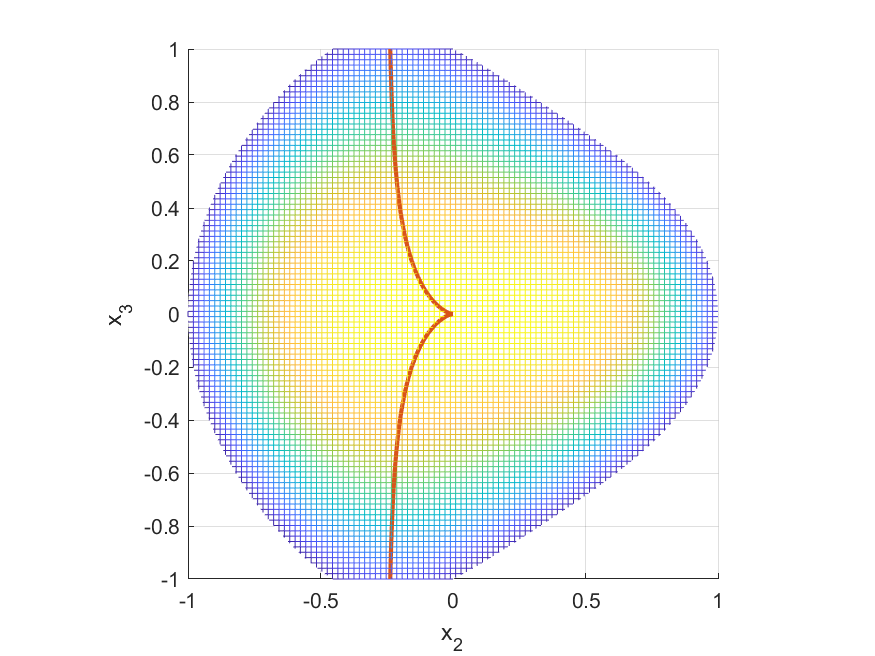} 
   \includegraphics[width=0.3\textwidth]{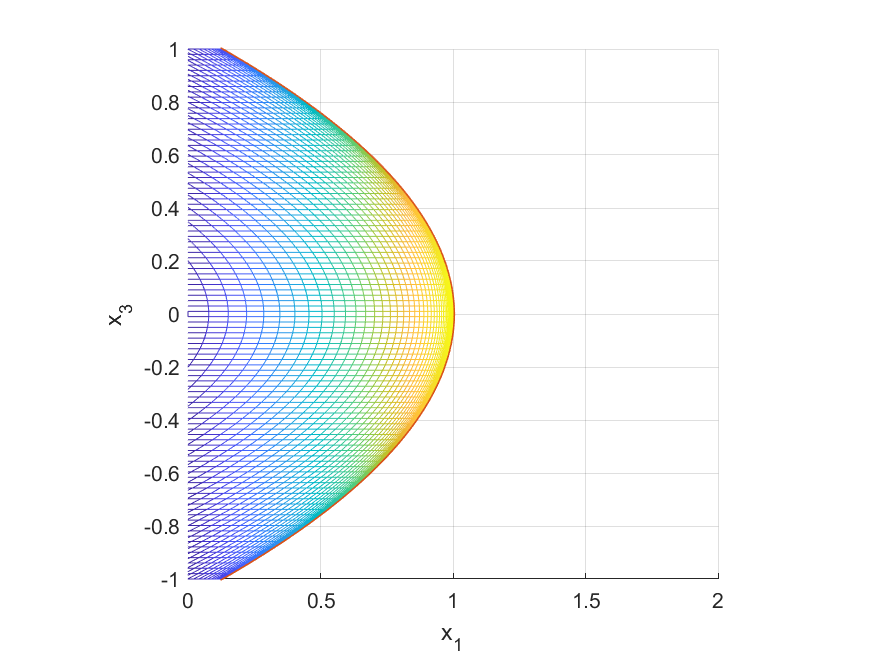}\\
   \includegraphics[width=0.3\textwidth]{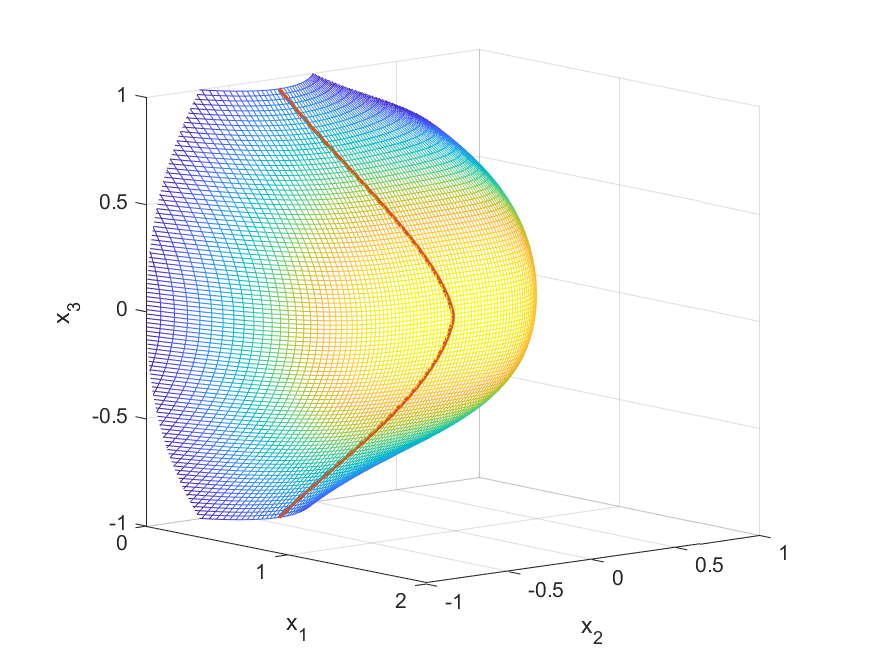} 
   \includegraphics[width=0.3\textwidth]{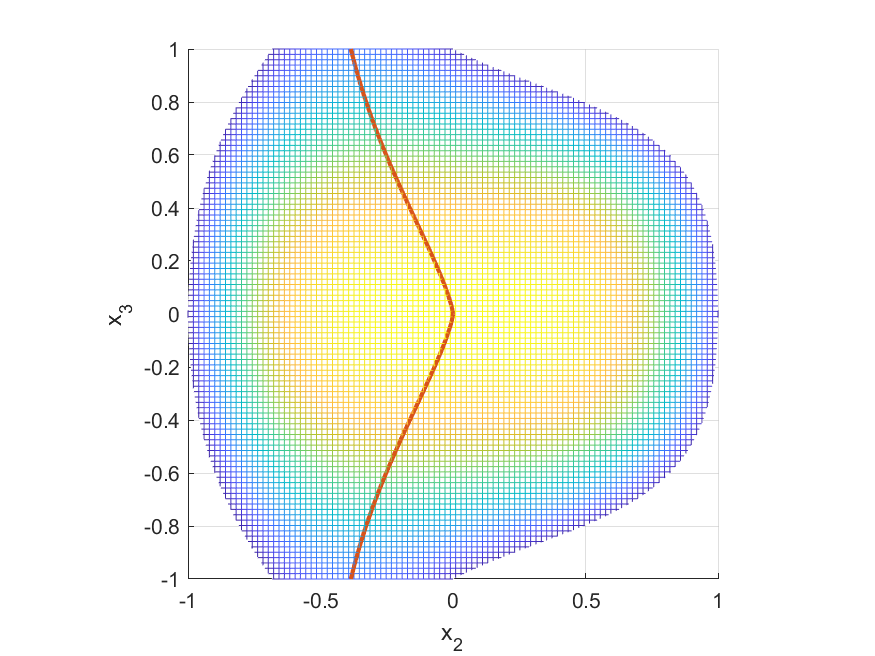} 
   \includegraphics[width=0.3\textwidth]{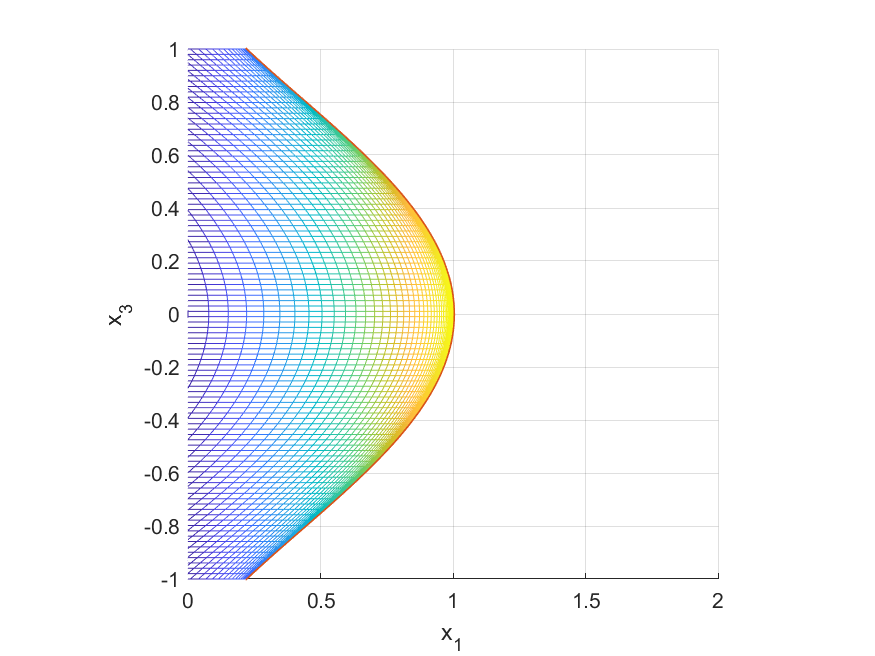}
   \caption{Light source at $(1,-\infty,0)$. First row: Cusped grazing set for the obstacle given by $x_1=1-(x_2^4+2x_2^2 x_3^2 + x_2 x_3^2+x_3^2)$. Second row: $C^1$ grazing set for the obstacle given by $x_1=1-(x_2^4+x_2 x_3^4 + x_2^2 x_3^4+x_3^2)$. Colored according to level sets $x_1=\mathrm{const.}$.}
   \label{f-3}
\end{figure}

\subsubsection{A grazing set that is $C^1$ but not twice differentiable for incoming plane waves}
Here is an example where the GS assumption holds just barely. 
Take $F(u,v)=1-G(u,v)$, where $G(u,v)=u^4+uv^4+u^2v^4+v^2$.     The incoming phase is $\phi_i=-t+(u,v)\cdot \otheta$, where $\otheta=(1,0)$.   Here again $\usigma\in G^4_d\setminus G^5$ and $\nabla^2 G(u,v)>0$ for $(u,v)\neq 0$.   
In this case the grazing curve is
\begin{align}\label{a8}
G_u=4u^3+v^4+2uv^4=0\Rightarrow 4u^3=-v^4(1+2u)\Rightarrow u=4^{-1/3}v^{4/3}+O(v^{8/3}), 
\end{align}
so $|u|\sim |v|^{4/3}$.   This is similar to \eqref{a10z}.  The same analysis shows that the grazing curve is $C^1$ but not $C^2$ near $(u,v)=0$.   See Figure \ref{f-3}.

\section{The reflected flow map for general convex incoming waves}\label{ciw}

We show in this section that the computations of section \ref{siw} can be generalized to a larger class of incoming waves.  Suppose $\mathcal O$ is a convex obstacle given by 
\[ \mathcal O:=\{ (x_1, \ox)| \ x_1< F(\ox),\; \ox\in\RR^{n-1} \} \]
where $F: \RR^{n-1}\to \RR$ is a $C^\infty$ strictly concave function.   We can arrange by translation and rotation  of $\mathcal{O}$ so that 
 $F$ has a max at $\ox=0$ with $F(0)=1$.   The tangent plane to $\partial\mathcal{O}$ at $(x_1,\ox)=(1,0)$ is then  $x_1=1$.

We consider incoming phases of the form 
\begin{align}\label{inc}
 \phi_i(x) = -t+\psi_i(x), \ (\partial_{x_1}\psi_i)^2+|\nabla \psi_i|^2=1,
 \end{align}
where $\psi_i$ is a convex function.
In what follows the notation $\nabla$ always means $\nabla_{\ox}$.



\begin{ass}\label{assp}
   Let $F$ and $\mathcal{O}$ be as just described.   Assume there exists $r>0$, an $\mathbb{R}^n-$open set $\Omega\ni (1,0)$, and a $C^\infty$ function $\psi:\Omega\to \mathbb{R}$ such that 
   \begin{align}\label{conditions}
   \begin{split}
   & (a)\; \Omega\cap\partial\mathcal{O}=\{(F(\ox),\ox):\ox\in B(0,r)\}\\
   &(b)\; |\nabla\psi_i|=1\text{ and }\psi_i\text{ is convex on }\Omega.
   \end{split}
   \end{align}

    
\end{ass}

\Remark
We can obtain a large supply of functions $\psi_i$ satisfying the conditions of Assumption \ref{assp} as follows.  Let $\mathcal C\subset \RR^n$ be any open convex set with $C^\infty$ boundary $\partial \mathcal{C}$.  It is well-known that the distance-to-the-boundary function $\psi_i(x)=-\mathrm{dist}(x,\partial{C})$ is smooth, convex, and satisfies $|\nabla \psi_i|=1$ for points of $\mathcal{C}$ sufficiently near $\partial\mathcal{C}$.  When $\partial{\mathcal C}$ is a plane, the incoming waves determined by \eqref{inc} are plane waves; when $\partial{\mathcal C}$ is a sphere, the incoming waves are spherical waves, and so on.


If the incoming bicharacteristics $\gamma_i$ hit the cotangent bundle on the space-time boundary at $(F(\ox),\ox,t, \xi^i_1(\ox), \oxi^i(\ox), -1)$, then there must hold 
\[ \xi_1^i(\ox) = \partial_{x_1}\psi_i(F(\ox),\ox), \ \oxi^i(\ox) = \nabla \psi_i( F(\ox),\ox ). \]
The grazing set and the illuminable set are
\begin{align}\label{grill2}
G_{\phi_i} = \{ (F(\ox),\ox,t)| \ \langle \oxi^i,\nabla F \rangle - \xi_1^i=0 \}, \ I_-=\{ (F(\ox),\ox,t)| \ \langle \oxi^i,\nabla F \rangle - \xi_1^i >0 \}. 
\end{align}
Fix any $T>0$ and any $r_1<r$ for $r$ as in Assumption \ref{assp}. In \eqref{grill2} and the rest of this section we take $\ox\in \overline{B(0,r_1)}$ and  $t\in [-T,T]$.

\begin{lemm}\label{mainl}
    Let $\Psi(\ox):=\psi_i(F(\ox),\ox)$.   If the incoming phase satisfies Assumption \ref{assp}, then:     \begin{enumerate}[label={\arabic*.}]
        \item The matrix $$\nabla^2\Psi-\xi^i_1\nabla^2F=\frac{\partial \oxi^i}{\partial\ox}+\nabla F\otimes \nabla \xi_1^i$$ is symmetric and positive semi-definite. 
      


        \item The incoming directions $\xi^i$ are non-focusing: for $\ox^*$, $\ox$  in the  $\ox$-projection of  $G_{\phi_i}\cup I$, the following holds 
        \[ \langle (F(\ox^*)-F(\ox), \ox^*-\ox), \xi^i(\ox^*)-\xi^i(\ox) \rangle \geq 0. \]
    \end{enumerate}
\end{lemm}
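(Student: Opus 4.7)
My plan is to handle the two parts separately; both reduce to the convexity of $\psi_i$ via chain rule computations. For Part~1, I would first apply the chain rule to $\Psi(\ox) = \psi_i(F(\ox), \ox)$, which, using the definitions of $\xi_1^i$ and $\oxi^i$, gives $\nabla \Psi(\ox) = \xi_1^i(\ox)\nabla F(\ox) + \oxi^i(\ox)$. Differentiating once more with the product rule produces exactly the asserted identity
$$\nabla^2\Psi \;=\; \xi_1^i\nabla^2 F \,+\, \nabla F \otimes \nabla\xi_1^i \,+\, \frac{\partial\oxi^i}{\partial\ox}.$$
To establish symmetry and positive semi-definiteness I would recompute the same Hessian in Jacobian form: letting $J := \begin{pmatrix}\nabla F \\ I_{n-1}\end{pmatrix}$ denote the $n\times(n-1)$ Jacobian of the parametrization $\ox \mapsto (F(\ox),\ox)$ of $\partial\mathcal{O}$, the chain rule yields
$$\nabla^2\Psi - \xi_1^i\nabla^2 F \;=\; J^{T}\,\nabla_{x}^{2}\psi_i(F(\ox),\ox)\,J,$$
where $\nabla_x^2\psi_i$ is the full $n\times n$ Hessian of $\psi_i$. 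Since $\psi_i$ is $C^\infty$ and convex on $\Omega$, Remark~2 after Definition~\ref{def: convex} gives $\nabla_x^2\psi_i \geq 0$ on $\Omega$, and the sandwich form is then manifestly symmetric and positive semi-definite.

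For Part~2, I would set $P := (F(\ox),\ox)$ and $P^* := (F(\ox^*),\ox^*)$, shrinking $r_1$ if necessary so that both points lie in $\Omega$. The key observation is that $\xi^i(\ox) = \nabla_x\psi_i(P)$ is just the full gradient of $\psi_i$ at $P$, while $(F(\ox^*)-F(\ox),\ox^*-\ox) = P^* - P$. The inequality therefore reduces to the standard monotonicity of the gradient of a convex function,
$$\langle P^* - P,\; \nabla_x\psi_i(P^*) - \nabla_x\psi_i(P)\rangle \;\geq\; 0,$$
which I would derive in one line by applying the convexity inequality of Definition~\ref{def: convex}(1) to the ordered pairs $(P,P^*)$ and $(P^*,P)$ and adding the two resulting inequalities.

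I do not anticipate any real obstacle; the argument is essentially bookkeeping. The only minor points to check are the row/column and tensor conventions of the paper in Part~1 (namely identifying $J^T\,\nabla_x^2\psi_i\,J$ with $\nabla F\otimes\nabla\xi_1^i + \frac{\partial\oxi^i}{\partial\ox}$), and the choice of $r_1$ small enough in Part~2 so that $P,P^* \in \Omega$, which is automatic since $\Omega$ is open and contains $(1,0)$.
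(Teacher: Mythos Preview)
Your proposal is correct and follows essentially the same route as the paper: for Part~1 the paper also differentiates $\nabla\Psi=\xi_1^i\nabla F+\oxi^i$ to get the identity and then rewrites $\nabla^2\Psi-\xi_1^i\nabla^2F$ as $\begin{pmatrix}(\nabla F)^T & I\end{pmatrix}\nabla_x^2\psi_i\begin{pmatrix}\nabla F\\ I\end{pmatrix}$, exactly your $J^T\nabla_x^2\psi_i\,J$; for Part~2 the paper applies the convexity inequality of Definition~\ref{def: convex} at the pair $(P,P^*)$ and at $(P^*,P)$ and adds, which is precisely the gradient-monotonicity step you describe. (Your ``shrinking $r_1$'' remark is unnecessary: Assumption~\ref{assp}(a) already guarantees $(F(\ox),\ox)\in\Omega$ for all $\ox\in\overline{B(0,r_1)}$ since $r_1<r$.)
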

\begin{proof}
    1. We have $\nabla \Psi = \xi_1^i\nabla F + \oxi^i$. Differentiate this  to obtain the given identity.  On the other hand 
    a direct computation shows
\[\begin{split} 
\nabla^2 \Psi = & (\partial_{x_1}^2\psi_i)\nabla F\otimes \nabla F+ \nabla F\otimes \nabla \partial_{x_1}\psi_i + \nabla \partial_{x_1}\psi_i \otimes \nabla F +\nabla^2\psi_i + (\partial_{x_1}\psi_i)\nabla^2 F \\
= & \begin{pmatrix} (\nabla F)^T & I \end{pmatrix} \begin{pmatrix} 
\partial_{x_1}^2\psi_i & \nabla\partial_{x_1}\psi_i \\ (\nabla\partial_{x_1}\psi_i)^T & \nabla^2\psi_i \end{pmatrix} \begin{pmatrix} \nabla F \\ I \end{pmatrix} +(\partial_{x_1}\psi_i)\nabla^2 F.
\end{split}\]
The result follows since the first matrix on the right is symmetric and positive semi-definite.


    2.  Using the formula $\xi^i(\ox) = (\partial_{x_1}\psi_i, \nabla \psi_i)(F(\ox),\ox)$ and the convexity of $\psi_i$, we find 
    \[\begin{split}
        & \langle (F(\ox^*)-F(\ox), \ox^*-\ox), \xi^i(\ox^*)-\xi^i(\ox) \rangle\\
        & = - \langle (\partial_{x_1}\psi_i,\nabla \psi_i)(F(\ox^*),\ox^*), (F(\ox), \ox) - (F(\ox^*), \ox^*) \rangle \\
        & \quad - \langle (\partial_{x_1}\psi_i,\nabla \psi_i)(F(\ox),\ox), (F(\ox^*), \ox^*) - (F(\ox), \ox) \rangle\\
        & \geq -(\psi_i(F(\ox), \ox) - \psi_i(F(\ox^*),\ox^*)) - ( \psi_i(F(\ox^*),\ox^*) - \psi_i(F(\ox), \ox) )=0.
    \end{split}\]
    This completes the proof.
    \color{black}
\end{proof}

Let $\xi^r(\ox)$ be the reflected covector field on the boundary, then 
\[ \xi^r-\xi^i = c(1,-\nabla F), \ |\xi^r|=1. \]
From here we solve 
\begin{equation}\begin{split}\label{xir}
    \xi_1^r & = \xi_1^i-\frac{2}{1+|\nabla F|^2}( \xi_1^i-\langle 
\oxi^i,\nabla F \rangle ), \\
    \oxi^r & = \oxi^i+\frac{ 2\nabla F }{ 1+|\nabla F|^2 }( \xi_1^i - \langle \oxi^i,\nabla F \rangle ).
\end{split}\end{equation}
We obtain the following identities from \eqref{xir}:
\begin{equation}\begin{gathered}\label{e1}
\xi_1^i\nabla F +\oxi^i = \xi_1^r\nabla F+\oxi^r, \
\xi_1^r-\langle \nabla F,\oxi^r \rangle = -(\xi_1^i-\langle 
\oxi^i,\nabla F \rangle).
\end{gathered}\end{equation}
The reflected flow map is then given by 
\[ Z_r(s,\ox,t)=(F(\ox)+2s\xi_1^r(\ox), \ox+2s\oxi^r(\ox), t+2s ) \text{ for }s\in [0,\infty),\; \ox\in B(0,r),\; t\in\mathbb{R}. \]

\begin{prop}\label{mainp}
    Fix any $s_0>0$. The reflected flow map $Z_r$ is a diffeomorphism from $[0,s_0]\times I_-$ onto its range and extends to a homeomorphism from $[0,s_0]\times (G_{\phi_i}\cup I_-)$ onto its range.
\end{prop}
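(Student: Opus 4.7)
The plan is to mirror the two-step proof of Proposition \ref{b1} with the convex-wave identities from Lemma \ref{mainl} replacing the spherical-specific computations. I aim to establish (i) injectivity of $Z_r$ on $[0,s_0]\times(G_{\phi_i}\cup I_-)$ and (ii) positivity of the Jacobian $j$ on $[0,s_0]\times I_-$; the homeomorphism extension is then automatic by continuity and compactness. For (i) I first prove the inequality
\begin{equation*}
\langle \xi^r(\ox^*)-\xi^r(\ox),\,(F(\ox^*)-F(\ox),\,\ox^*-\ox)\rangle \geq 0
\end{equation*}
by substituting $\oxi^r=\oxi^i+(\xi_1^i-\xi_1^r)\nabla F$ and combining concavity of $F$, the sign $\xi_1^i-\xi_1^r\leq 0$ on $G_{\phi_i}\cup I_-$, and Lemma \ref{mainl}(2). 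Given $Z_r(s,\ox,t)=Z_r(s^*,\ox^*,t^*)$, dotting $(F(\ox^*)-F(\ox),\ox^*-\ox)=2s\xi^r(\ox)-2s^*\xi^r(\ox^*)$ with $\xi^r(\ox^*)-\xi^r(\ox)$ and using $|\xi^r|=1$ then forces either $s=s^*=0$ (so $\ox=\ox^*$) or $\xi^r(\ox)=\xi^r(\ox^*)$; in the latter case the chord between the two boundary points is parallel to $\xi^r(\ox)$, and dotting with the outward normal $(1,-\nabla F)$ at each endpoint and using strict concavity of $F$ together with $\xi^r\cdot(1,-\nabla F)>0$ on $I_-$ (respectively the fact that on $G_{\phi_i}$ the tangent plane to $\partial\mathcal O$ at a strictly concave boundary point meets $\partial\mathcal O$ only at the tangent point) yields a contradiction.

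For the Jacobian, the formula
\begin{equation*}
j(s,\ox,t) = 2\xi_1^r\det\!\Bigl(B + 2sC\,\tfrac{\partial \oxi^r}{\partial \ox}\Bigr),\quad B=I-\tfrac{\oxi^r\otimes\nabla F}{\xi_1^r},\ C=I+\tfrac{\oxi^r\otimes\oxi^r}{(\xi_1^r)^2},
\end{equation*}
is derived exactly as in Proposition \ref{b1} using only $|\xi^r|^2=1$. The central new algebraic identity, which I expect to be the main technical point of the proof, is
\begin{equation*}
B^T\,\tfrac{\partial \oxi^r}{\partial \ox} \;=\; \nabla^2\Psi - \xi_1^r\nabla^2 F \;=:\; Q;
\end{equation*}
this follows from $\nabla\Psi=\xi_1^r\nabla F+\oxi^r$ (by \eqref{e1}) by differentiating once more in $\ox$ and using $\oxi^r\,\tfrac{\partial \oxi^r}{\partial \ox}=-\xi_1^r\nabla\xi_1^r$ (from $|\xi^r|^2=1$). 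Setting $P:=\nabla^2\Psi-\xi_1^i\nabla^2 F$, Lemma \ref{mainl}(1) gives $P\geq 0$; then $Q=P+(\xi_1^i-\xi_1^r)\nabla^2 F$ is symmetric positive semi-definite on $G_{\phi_i}\cup I_-$, since $\xi_1^i-\xi_1^r\leq 0$ and $\nabla^2 F\leq 0$ by concavity make the second summand positive semi-definite as well.

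The endgame then exactly parallels the spherical proof: factor $\det(A)=\det(B)\det(I+2s B^{-1}CB^{-T}\cdot Q)$; by \eqref{e1} one has $\det(B)=(\langle\oxi^i,\nabla F\rangle-\xi_1^i)/\xi_1^r>0$ on $I_-$; and since $B^{-1}CB^{-T}$ is symmetric positive definite and $Q$ symmetric positive semi-definite, their product is similar to a symmetric positive semi-definite matrix, hence has non-negative real eigenvalues, giving $\det(I+2sB^{-1}CB^{-T}Q)\geq 1$. Combining, $j(s,\ox,t)\geq 2(\langle\oxi^i,\nabla F\rangle-\xi_1^i)>0$ on $[0,s_0]\times I_-$, completing the diffeomorphism. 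The homeomorphism extension to the compact domain $[0,s_0]\times(G_{\phi_i}\cup I_-)$ is then immediate from continuity and injectivity.
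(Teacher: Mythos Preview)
Your proof is correct and follows the same two-step architecture as the paper (injectivity via the monotonicity inequality for $\xi^r$, then Jacobian positivity via the factorization $j=2\xi_1^r\det(B)\det(I+2sB^{-1}CB^{-T}Q)$ with $Q\geq 0$). Your Jacobian step is in fact cleaner: the paper splits $\partial\oxi^r/\partial\ox=K+L$ and computes $B^TK=\nabla^2\Psi-\xi_1^i\nabla^2F$ and $B^TL=(\xi_1^i-\xi_1^r)\nabla^2F$ separately through a page of algebra, whereas you obtain the sum $B^T\partial\oxi^r/\partial\ox=\nabla^2\Psi-\xi_1^r\nabla^2F$ in one line by differentiating $\nabla\Psi=\xi_1^r\nabla F+\oxi^r$ and using $\oxi^r\,\partial\oxi^r/\partial\ox=-\xi_1^r\nabla\xi_1^r$; the decomposition into $P+(\xi_1^i-\xi_1^r)\nabla^2F$ then recovers exactly the paper's two positive semi-definite summands. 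Your injectivity endgame (dotting the chord, which is parallel to $\xi^r$, with the outward normal at each endpoint) is also slightly different from and a bit more direct than the paper's, which instead tracks the equality conditions in the $\geq 0$ inequality to force both points onto $G_{\phi_i}$ with $\xi^i(\ox)=\xi^i(\ox^*)$ before invoking strict concavity.
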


\begin{proof}
{\bf Injectivity.} 
It suffices to show that 
\[ z(s,\ox):=(F(\ox), \ox)+2s \xi^r(\ox) \]
is injective from the $(s,\ox)$-projection of $G_{\phi_i}\cup I_-$ to its range. Suppose the contrary, then there exists $(s,\ox)\neq (s^*,\ox^*)$ in the $(s,\ox)$-projection of $G_{\phi_i}\cup I_-$ such that\footnote{This assumption implies $\ox\neq \ox^*$.} 
\begin{equation}\label{con} 
z(s,\ox)=z(s^*,\ox^*), \text{ i.e., } (F(\ox^*)-F(\ox), \ox^*-\ox) = 2s\xi^r(\ox)-2s^*\xi^r(\ox^*). 
\end{equation}
Using the second equation in \eqref{con}  we find 
\begin{equation}\begin{split}\label{contra1}
\langle (F(\ox^*)-F(\ox), \ox^*-\ox), \xi^r(\ox^*)-\xi^r(\ox) \rangle = -2(s+s^*)(1-\langle \xi^r(\ox^*), \xi^r(\ox) \rangle)\leq 0
\end{split}\end{equation}
with equality holding if and only if
\begin{equation}\label{cond1}
    \xi^r(\ox^*) = \xi^r(\ox).
\end{equation}

On the other hand, we show that for all $\ox, \ox^*$, there holds
\begin{equation}
\label{def}
\langle (F(\ox^*)-F(\ox), \ox^*-\ox), \xi^r(\ox^*)-\xi^r(\ox) \rangle\geq 0, 
\end{equation}
with equality holding if and only if 
\begin{equation}\begin{gathered}\label{cond2} 
\xi_1^i(\ox)-\langle \oxi^i(\ox), \nabla F(\ox) \rangle = \xi_1^i(\ox^*)-\langle \oxi^i(\ox^*), \nabla F(\ox^*) \rangle=0, \text{ and }\\
\langle (F(\ox^*)-F(\ox), \ox^*-\ox), \xi^i(\ox^*)-\xi^i(\ox) \rangle=0. 
\end{gathered}\end{equation}
\color{black}
Indeed, we have
\begin{align}\label{q1}
\begin{split}
    & \langle (F(\ox^*)-F(\ox), \ox^*-\ox), \xi^r(\ox^*)-\xi^r(\ox) \rangle \\
    & \overset{\eqref{e1}}{=} (F(\ox^*)-F(\ox))(\xi_1^r(\ox^*)-\xi_1^r(\ox))+\langle \ox^*-\ox, \oxi^i(\ox^*)-\oxi^i(\ox) \rangle\\
    & \qquad + (\xi_1^r(\ox^*)-\xi_1^i(\ox^*))\langle \nabla F(\ox^*), \ox-\ox^* \rangle + (\xi_1^r(\ox)-\xi_1^i(\ox))\langle \nabla F(\ox), \ox^*-\ox \rangle \\
    & \geq  (F(\ox^*)-F(\ox))(\xi_1^r(\ox^*)-\xi_1^r(\ox))+\langle \ox^*-\ox, \oxi^i(\ox^*)-\oxi^i(\ox) \rangle\\
    & \qquad + (\xi_1^r(\ox^*)-\xi_1^i(\ox^*))( F(\ox)-F(\ox^*) ) + (\xi_1^r(\ox)-\xi_1^i(\ox))( F(\ox^*)-F(\ox) )\\
    & = \langle (F(\ox^*)-F(\ox), \ox^*-\ox), \xi^i(\ox^*)-\xi^i(\ox) \rangle \geq 0.
\end{split}
\end{align}
Here we used \eqref{xir} and the definition of $G_{\phi_i}\cup I_-$, which imply that
\begin{align}\label{q2}
\begin{split}
\xi_1^r(\ox^*)-\xi_1^i(\ox^*) & = \frac{2}{1+|\nabla F(\ox^*)|^2}\left( 
\langle \oxi^i(\ox^*), \nabla F(\ox^*) \rangle - \xi_1^i(\ox^*) \right)\geq 0,  \\
\xi_1^r(\ox)-\xi_1^i(\ox) & = \frac{2}{1+|\nabla F(\ox)|^2}\left( 
\langle \oxi^i(\ox), \nabla F(\ox) \rangle - \xi_1^i(\ox) \right)\geq 0.
\end{split}
\end{align}
Since $F$ is strictly concave, with \eqref{q2}  we conclude  that  overall equality holds in \eqref{q1}  if and only if 
\[\begin{gathered} 
\xi_1^i(\ox)-\langle \oxi^i(\ox),\nabla F(\ox) \rangle = \xi_1^i(\ox^*) - \langle \oxi^i(\ox^*), \nabla F(\ox^*) \rangle=0, \text{ and }\\ 
 \langle (F(\ox^*)-F(\ox), \ox^*-\ox), \xi^i(\ox^*)-\xi^i(\ox) \rangle=0.
\end{gathered}\]

Combining \eqref{contra1} and \eqref{def}, we conclude that \eqref{cond1} and \eqref{cond2} must be true. The first condition in \eqref{cond2}, \eqref{xir}, and \eqref{cond1} together imply that 
\begin{equation}
    \xi^i(\ox^*)=\xi^r(\ox^*) = \xi^r(\ox) = \xi^i(\ox).
\end{equation}
This and \eqref{con} give 
\begin{equation}\label{diff} 
(F(\ox^*)-F(\ox), \ox^*-\ox) = 2(s-s^*)\xi^i(\ox). 
\end{equation}
From \eqref{cond2} we have $\xi_1^i(\ox)=\langle \oxi^i(\ox), \nabla F(\ox) \rangle$.  
Multiplying this equation  by $2(s-s^*)$ and using \eqref{diff}, we obtain 
\[ F(\ox^*)-F(\ox) = \langle \nabla F(\ox), \ox^*-\ox \rangle.  \]
Since $F$ is strictly concave, this can be true only if $\ox=\ox^*$, which implies that $(s,\ox)=(s^*,\ox^*)$ by \eqref{con}. This contradicts the assumption.

The map $Z_r$ is continuous and injective on the compact domain $[0,s_0]\times(G_{\phi_i}\cup I_-)$, and hence is a homeomorphism on this domain.

\color{black}

{\bf Diffeomorphism.}
Recall that 
\[ A = I-\frac{\oxi^r\otimes \nabla F}{\xi_1^r} + 2s\left( \frac{\partial\oxi^r}{\partial\ox} - \frac{ \oxi^r\otimes \nabla \xi_1^r }{ \xi_1^r} \right). \]
Differentiating $|\xi^r|^2=1$ we find 
\begin{align}\label{q3}
 A = I-\frac{\oxi^r\otimes \nabla F}{\xi_1^r} + 2s\left( I + \frac{ \oxi^r\otimes \oxi^r }{ (\xi_1^r)^2} \right)\frac{\partial \oxi^r}{\partial\ox}. 
 \end{align}
Recalling \eqref{e1}, we differentiate 
\[ \oxi^r = \oxi^i+(\xi_1^i-\xi_1^r)\nabla F \] 
to obtain
\[ \frac{\partial \oxi^r}{\partial\ox} = \frac{\partial\oxi^i}{\partial\ox}+\nabla F\otimes \nabla (\xi_1^i-\xi_1^r)+(\xi_1^i-\xi_1^r)\nabla^2 F. \]
Differentiate the defining formula \eqref{xir} of $\xi_1^r$ to find 
\[\begin{split}
    \nabla (\xi_1^i-\xi_1^r) = & 2\frac{ \nabla \xi_1^i -\nabla F\frac{\partial \oxi^i}{\partial\ox} - \oxi^i\nabla^2 F }{ 1+|\nabla F|^2 } - 2\frac{ \oxi^r-\oxi^i }{1+|\nabla F|^2}\nabla^2 F \\
    = & -2\frac{ \oxi^i+\xi_1^i\nabla F }{\xi_1^i(1+|\nabla F|^2)} \frac{\partial\oxi^i}{\partial\ox}-\frac{2\oxi^r\nabla^2 F}{1+|\nabla F|^2}.
\end{split}\]
Therefore we find 
\[\begin{split}
    \frac{\partial\oxi^r}{\partial\ox} = & \frac{\partial\oxi^i}{\partial\ox}-\frac{ 2\nabla F\otimes (\oxi^i+\xi_1^i\nabla F) }{ \xi_1^i(1+|\nabla F|^2) }\frac{\partial\oxi^i}{\partial\ox} -\frac{2}{1+|\nabla F|^2}(\nabla F\otimes \oxi^r)\nabla^2 F + (\xi_1^i-\xi_1^r)\nabla^2 F \\
    = & \left( I-\frac{2\nabla F \otimes (\oxi^i+\xi_1^i\nabla F)}{ \xi_1^i (1+|\nabla F|^2) } \right) \frac{ \partial \oxi^i }{\partial\ox}+(\xi_1^i-\xi_1^r)\left( I-\frac{ 2\nabla F\otimes \oxi^r }{ (1+|\nabla F|^2)(\xi_1^i-\xi_1^r) } \right)\nabla^2 F.
\end{split}\]
In the following we denote 
\[\begin{split} 
B:= & I-\frac{ \oxi^r \otimes \nabla F }{\xi_1^r}, \ C:=I + \frac{ \oxi^r\otimes \oxi^r }{ (\xi_1^r)^2}, \\ 
K:= & \left( I-\frac{2\nabla F \otimes (\oxi^i+\xi_1^i\nabla F)}{ \xi_1^i (1+|\nabla F|^2) } \right) \frac{ \partial \oxi^i }{\partial\ox}, \\
L:= & (\xi_1^i-\xi_1^r)\left( I-\frac{ 2\nabla F\otimes \oxi^r }{ (1+|\nabla F|^2)(\xi_1^i-\xi_1^r) } \right)\nabla^2 F.
\end{split}\]
This allows us to rewrite $A$ as in \eqref{q3} as 
\begin{align*}\label{q5}
A=B+2sC\frac{ \partial \oxi^r }{\partial\ox}=B+2sC(K+L)=B[I+2sB^{-1}C(B^{-1})^T(B^TK+B^TL)].
\end{align*}

We compute
\[ B^TK=\left( I-\frac{\nabla F\otimes \oxi^r}{\xi_1^r} - \frac{ 2\nabla F\otimes (\oxi^i+\xi_1^i\nabla F) }{\xi_1^i(1+|\nabla F|^2)} + \frac{2 \langle \oxi^r,\nabla F \rangle \nabla F\otimes (\oxi^i+\xi_1^i\nabla F) }{\xi_1^i\xi_1^r(1+|\nabla F|^2)} \right)\frac{\partial\oxi^i}{\partial\ox}. \]
By the definition of $\oxi^r$ \eqref{xir} we have 
\[\begin{split} 
\langle \oxi^r,\nabla F \rangle 
= & \langle \oxi^i,\nabla F \rangle +|\nabla F|^2(\xi_1^i-\xi_1^r) \\
= & \frac{ 1+|\nabla F|^2 }{2}(\xi_1^r-\xi_1^i)+|\nabla F|^2(\xi_1^i-\xi_1^r)+\xi_1^i \\
= & \frac{1+|\nabla F|^2}{2}\xi_1^i+\frac{1-|\nabla F|^2}{2}\xi_1^r.
\end{split}\]
This implies\footnote{To obtain the second equality in  \eqref{q6}, use \eqref{e1} to replace $\oxi^i+\xi_1^i\nabla F$ by $\oxi^r+\xi_1^r\nabla F$.}
\begin{align}\label{q6}
\begin{split} 
B^TK = & \left(I-\frac{ \nabla F\otimes \oxi^r }{\xi_1^r} + \frac{ \xi_1^i-\xi_1^r }{\xi_1^i\xi_1^r}\nabla F\otimes (\oxi^i+\xi_1^i\nabla F)\right)\frac{\partial \oxi^i}{\partial\ox} \\
= & \left( I+\frac{ \nabla F \otimes ( (\xi_1^i-\xi_1^r)\nabla F-\oxi^r ) }{\xi_1^i} \right)\frac{\partial \oxi^i}{\partial\ox} \\
= & \left( I-\frac{\nabla F\otimes \oxi^i}{\xi_1^i} \right)\frac{\partial \oxi^i}{\partial\ox} 
=  \frac{\partial \oxi^i}{\partial\ox}+\nabla F\otimes \nabla \xi_1^i = \nabla^2 \Psi-\xi^i_1\nabla^2F.
\end{split}
\end{align}
Now apply Lemma \ref{mainl} to see that $B^TK$ is symmetric and positive semi-definite.







We now simplify $B^TL$. Notice that
\[ (B^{-1})^T = I+\frac{\nabla F\otimes \oxi^r}{\xi_1^r-\langle 
\nabla F,\oxi^r \rangle} = I-\frac{\nabla F\otimes \oxi^r}{\xi_1^i-\langle 
\nabla F,\oxi^i \rangle}. \]
On the other hand, using the definition \eqref{xir} of $\xi_1^r$, we have 
\[ L=(\xi_1^i-\xi_1^r)\left( I-\frac{\nabla F\otimes \oxi^r}{\xi_1^i-\langle \oxi^i,\nabla F \rangle} \right)\nabla^2 F=(\xi_1^i-\xi_1^r)B^{-T}\nabla^2 F. \]
Thus we have 
\begin{align}\label{btl}
B^TL = (\xi_1^i-\xi_1^r)\nabla^2 F\geq 0. 
\end{align}
Arguing as in the case of spherical waves,  we obtain 
\[\begin{split} 
    j(s,\ox,t)= & 2\xi_1^r\det A=2\xi_1^r\det(B)\det(I+2sB^{-1}C(K+L))\\
    = & 2(\langle \oxi^i,\nabla F \rangle -\xi_1^i )\det(I+2sB^{-1}C(B^{-1})^T(B^TK+B^TL)) \\
    \geq & 2(\langle \oxi^i,\nabla F \rangle -\xi_1^i )>0.
\end{split}\]
This completes the proof.
\end{proof}

\bibliographystyle{alpha}
\bibliography{GeoOptics}

\end{document}